%% file: main.tex
\documentclass[11pt]{article}
\newcommand\arxiv{}

\ifdefined\arxiv

\usepackage[margin=15mm]{geometry}
\input{sharedarxiv}

\title{A recursive butterfly factorization with optimality guarantees}
\author{David Persson\thanks{New York University \& Flatiron Institute \href{mailto:dup210@nyu.edu}{dup210@nyu.edu}, \href{mailto:dpersson@flatironinstitute.org}{dpersson@flatironinstitute.org}}
\and Paul G. Beckman\thanks{Oden Institute, University of Texas at Austin \href{mailto:paul.beckman@austin.utexas.edu}{paul.beckman@austin.utexas.edu}}
\and Tyler Chen\thanks{New York University, \href{mailto:tyler.chen@nyu.edu}{tyler.chen@nyu.edu}}
\and Diana Halikias\thanks{New York University, \href{mailto:diana.halikias@nyu.edu}{diana.halikias@nyu.edu}}
\and Christopher Musco\thanks{New York University, \href{mailto:cmusco@nyu.edu}{cmusco@nyu.edu} }}

\begin{document}
\maketitle
\begin{abstract}
We formalize a recursive format for representing a butterfly matrix.  
This new format naturally leads to a  simple recursive algorithm for computing a quasi-optimal butterfly approximation to an arbitrary $N \times N$ matrix $\bm{A}$. 
When the entries of $\bm{A}$ are explicitly available, we show that the algorithm computes a butterfly matrix $\bm{B}$ in $O(N^2)$ operations with approximation error 
$\|\bm{A} - \bm{B}\|_\F$ at most a $O(\sqrt{\log(N)})$ factor away from that of the best possible approximation by a butterfly matrix. 
We also develop a matrix-free variant of the method, which uses $\widetilde{O}(\sqrt{N})$ matrix-vector products and $\widetilde{O}(N)$ working memory and, with high probability, returns a butterfly approximation with Frobenius norm error within a $O(N^{1/4})$-factor of the optimal error. 
We show that the algorithm is a reformulation of the hybrid butterfly factorization approach  presented in [Liu et. al.; SISC, 43 (2021)]. Our paper therefore provides the first theoretical quasi-optimality guarantee for that algorithm. 
\end{abstract}

\else
\usepackage[numbers]{natbib}

\input{shared}

\begin{document}
\let\WriteBookmarks\relax
\def\floatpagepagefraction{1}
\def\textpagefraction{.001}
\shorttitle{Recursive butterfly factorization}
\shortauthors{Persson, Beckman, Chen, Halikias, Musco}

\title [mode = title]{A recursive butterfly factorization with optimality guarantees}

\tnotetext[1]{DH and CM were partially supported by NSF Award \#2427363.}

\author[1,2]{David Persson}[orcid=0009-0006-9980-5854]
\cormark[1]
\ead{dup210@nyu.edu}
\affiliation[1]{organization={New York University},
                city={New York},
                state={NY},
                country={USA}}
\affiliation[2]{organization={Flatiron Institute},
                city={New York},
                state={NY},
                country={USA}}

\author[3]{Paul G. Beckman}[]
\ead{paul.beckman@austin.utexas.edu}

\affiliation[3]{organization={Oden Institute, University of Texas at Austin},
                city={Austin},
                state={TX},
                country={USA}}

\author[1]{Tyler Chen}[orcid=0000-0002-1187-1026]
\ead{tyler.chen@nyu.edu}

\author[1]{Diana Halikias}[orcid={0000-0001-7159-7770
}]
\ead{diana.halikias@nyu.edu}

\author[1]{Christopher Musco}[]
\ead{cmusco@nyu.edu}

\cortext[cor1]{Corresponding author}

\begin{abstract}
We formalize a recursive format for representing a butterfly matrix.  
This new format naturally leads to a  simple recursive algorithm for computing a quasi-optimal butterfly approximation to an arbitrary $N \times N$ matrix $\bm{A}$. 
When the entries of $\bm{A}$ are explicitly available, we show that the algorithm computes a butterfly matrix $\bm{B}$ in $O(N^2)$ operations with approximation error 
$\|\bm{A} - \bm{B}\|_\F$ at most a $O(\sqrt{\log(N)})$ factor away from that of the best possible approximation by a butterfly matrix. 
We also develop a matrix-free variant of the method, which uses $\widetilde{O}(\sqrt{N})$ matrix-vector products and $\widetilde{O}(N)$ working memory and, with high probability, returns a butterfly approximation with Frobenius norm error within a $O(N^{1/4})$-factor of the optimal error. 
We show that the algorithm is a reformulation of the hybrid butterfly factorization approach  presented in [Liu et. al.; SISC, 43 (2021)]. Our paper therefore provides the first theoretical quasi-optimality guarantee for that algorithm. 
\end{abstract}

\begin{highlights}
\item Recursive data structure for expressing butterfly matrices
\item Near-optimal butterfly approximations
\item Near-optimality guarantees on computing butterfly approximations from matrix-vector products
\end{highlights}

\begin{keywords}
butterfly matrices, hierarchical matrices, rank-structured matrices, randomized algorithms, matrix-vector products
\end{keywords}

\maketitle

\fi

\input{intro}

\input{prelims}
\input{meta_algorithm}
\input{matvec_algorithm}
\input{numerical}
\input{conclusion}
\input{ai}
\appendix
\input{recursive}
\input{absolute}
\input{memory_efficient_implementation}
\input{naive_matvec_algorithm}
\input{mapbetweenformats}

\bibliographystyle{siam}
\bibliography{bibliography}

\end{document}

%% file: sharedarxiv.tex
\usepackage{lipsum}
\usepackage{stmaryrd}
\usepackage{amsfonts}
\usepackage{graphicx}
\usepackage{epstopdf}
\usepackage{amsmath}
\usepackage{amsopn}
\usepackage{amsthm}
\usepackage{hyperref}
\hypersetup{
    colorlinks,
    linkcolor=c0,
    citecolor=c1,
    urlcolor=c0
}
\usepackage{bm}
\usepackage{amssymb}
\usepackage{xcolor}
\usepackage{overpic}
\usepackage{algorithm}
\usepackage{algpseudocode}
\algrenewcommand\algorithmiccomment[1]{\hfill{\color{gray}$\triangleright$~#1}}
\algrenewcommand\algorithmicindent{1em}
\usepackage{tikz}
\usetikzlibrary{arrows.meta,positioning,fit,backgrounds,calc,patterns}
\usepackage{tikz}
\usetikzlibrary{matrix}
\usepackage{subcaption}
\usepackage[capitalize,nameinlink,noabbrev]{cleveref}
\usepackage{array}
\usepackage{colortbl}

\crefformat{equation}{#2(#1)#3}
\crefmultiformat{equation}{{}#2(#1)#3}{ and~#2(#1)#3}{, #2(#1)#3}{ and~#2(#1)#3}
\Crefformat{equation}{#2(#1)#3}
\Crefmultiformat{equation}{{}#2(#1)#3}{ and~#2(#1)#3}{, #2(#1)#3}{ and~#2(#1)#3}

\crefname{assumption}{Assumption}{Assumptions}
\crefname{problem}{Problem}{Problems}
\crefname{section}{Section}{Sections}
\crefname{subsection}{Subsection}{Subsections}

\usepackage{xcolor}
\definecolor{c0}{HTML}{641a80}
\definecolor{c1}{HTML}{b73779}
\definecolor{c2}{HTML}{1f77b4}
\definecolor{c3}{HTML}{ff7f0e}
\definecolor{c4}{HTML}{2ca02c}

\DeclareMathOperator{\tr}{tr}
\DeclareMathOperator{\Var}{Var}
\DeclareMathOperator{\rank}{rank}
\DeclareMathOperator{\diag}{diag}
\DeclareMathOperator{\range}{range}
\DeclareMathOperator*{\argmin}{arg\,min}
\newcommand{\blockdiag}{\operatorname{blockdiag}}
\newcommand{\blockblr}{\operatorname{blockBLR}^2}
\newcommand{\butterfly}{\operatorname{BF}}
\newcommand{\SSS}{\operatorname{SSS}}
\newcommand{\BLR}{\operatorname{BLR}}
\newcommand{\bsep}{\operatorname{BF-out}}
\newcommand{\butterflymid}{\operatorname{BF-mid}}
\newcommand{\T}{\mathsf{T}}
\newcommand{\F}{\mathsf{F}}
\newcommand{\R}{\mathbb{R}}
\newcommand{\C}{\mathbb{C}}
\newcommand{\bO}{\mathcal{O}}
\newcommand{\mA}{\bm{A}}
\newcommand{\mB}{\bm{B}}
\newcommand{\mU}{\bm{U}}
\newcommand{\mLambda}{\bm{Lambda}}
\newcommand{\mI}{\bm{I}}
\newcommand{\leftfold}{\operatorname{leftfold}}
\newcommand{\side}{\operatorname{side}}
\newcommand{\memory}{\operatorname{Memory}}
\newcommand{\gaussian}{\operatorname{Gaussian}}
\newcommand{\complexity}{\operatorname{Complexity}}
\newcommand{\halving}{\operatorname{halving}}
\newcommand{\domain}{\operatorname{domain}}
\newcommand{\OPT}{\mathrm{OPT}}

\usepackage[labelfont=bf]{caption}

\usepackage[shortlabels]{enumitem}

\usepackage[dvipsnames]{xcolor}
\newcommand{\David}[1]{\textcolor{blue}{\bf David: #1}}
\newcommand{\Tyler}[1]{\textcolor{ForestGreen}{\bf Tyler: #1}}
\newcommand{\diana}[1]{\textcolor{purple}{\bf Diana: #1}}
\newcommand{\paul}[1]{\textcolor{teal}{\bf Paul: #1}}
\newcommand{\chris}[1]{\textcolor{blue}{\bf Chris: #1}}
\newcommand{\TODO}[1]{\textcolor{red}{\bf TODO: #1}}

\newcommand\numberthis{\addtocounter{equation}{1}\tag{\theequation}}

\ifpdf
  \DeclareGraphicsExtensions{.eps,.pdf,.png,.jpg}
\else
  \DeclareGraphicsExtensions{.eps}
\fi

\newcommand{\creflastconjunction}{, and~}

\newtheorem{theorem}{Theorem}[section]
\newtheorem*{theorem*}{Theorem}
\newtheorem{lemma}[theorem]{Lemma}

\newtheorem{corollary}[theorem]{Corollary}
\theoremstyle{definition}
\newtheorem{remark}[theorem]{Remark}
\newtheorem{definition}[theorem]{Definition}

%% file: shared.tex
\usepackage{lipsum}
\usepackage{stmaryrd}
\usepackage{amsfonts}
\usepackage{graphicx}
\usepackage{epstopdf}
\usepackage{amsmath}
\usepackage{amsopn}
\usepackage{amsthm}
\usepackage{hyperref}
\hypersetup{
    colorlinks,
    linkcolor=c0,
    citecolor=c1,
    urlcolor=c0
}
\usepackage{bm}
\usepackage{amssymb}
\usepackage{xcolor}
\usepackage{overpic}
\usepackage{algorithm}
\usepackage{algpseudocode}
\algrenewcommand\algorithmiccomment[1]{\hfill{\color{gray}$\triangleright$~#1}}
\algrenewcommand\algorithmicindent{1em}
\usepackage{tikz}
\usetikzlibrary{arrows.meta,positioning,fit,backgrounds,calc,patterns}
\usepackage{tikz}
\usetikzlibrary{matrix}
\usepackage{subcaption}
\usepackage[capitalize,nameinlink,noabbrev]{cleveref}

\crefformat{equation}{#2(#1)#3}
\crefmultiformat{equation}{{}#2(#1)#3}{ and~#2(#1)#3}{, #2(#1)#3}{ and~#2(#1)#3}
\Crefformat{equation}{#2(#1)#3}
\Crefmultiformat{equation}{{}#2(#1)#3}{ and~#2(#1)#3}{, #2(#1)#3}{ and~#2(#1)#3}

\crefname{assumption}{Assumption}{Assumptions}
\crefname{problem}{Problem}{Problems}
\crefname{section}{Section}{Sections}
\crefname{subsection}{Subsection}{Subsections}

\usepackage{xcolor}
\definecolor{c0}{HTML}{641a80}
\definecolor{c1}{HTML}{b73779}
\definecolor{c2}{HTML}{1f77b4}
\definecolor{c3}{HTML}{ff7f0e}
\definecolor{c4}{HTML}{2ca02c}

\DeclareMathOperator{\tr}{tr}
\DeclareMathOperator{\Var}{Var}
\DeclareMathOperator{\rank}{rank}
\DeclareMathOperator{\diag}{diag}
\DeclareMathOperator{\range}{range}
\DeclareMathOperator*{\argmin}{arg\,min}
\newcommand{\blockdiag}{\operatorname{blockdiag}}
\newcommand{\blockblr}{\operatorname{blockBLR}^2}
\newcommand{\butterfly}{\operatorname{BF}}
\newcommand{\SSS}{\operatorname{SSS}}
\newcommand{\BLR}{\operatorname{BLR}}
\newcommand{\bsep}{\operatorname{BF-out}}
\newcommand{\butterflymid}{\operatorname{BF-mid}}
\newcommand{\T}{\mathsf{T}}
\newcommand{\F}{\mathsf{F}}
\newcommand{\R}{\mathbb{R}}
\newcommand{\C}{\mathbb{C}}
\newcommand{\bO}{\mathcal{O}}
\newcommand{\mA}{\bm{A}}
\newcommand{\mB}{\bm{B}}
\newcommand{\mU}{\bm{U}}
\newcommand{\mLambda}{\bm{Lambda}}
\newcommand{\mI}{\bm{I}}
\newcommand{\leftfold}{\operatorname{leftfold}}
\newcommand{\side}{\operatorname{side}}
\newcommand{\memory}{\operatorname{Memory}}
\newcommand{\gaussian}{\operatorname{Gaussian}}
\newcommand{\complexity}{\operatorname{Complexity}}
\newcommand{\halving}{\operatorname{halving}}
\newcommand{\domain}{\operatorname{domain}}
\newcommand{\OPT}{\mathrm{OPT}}

\usepackage[labelfont=bf]{caption}

\usepackage[shortlabels]{enumitem}

\usepackage{lineno}
\linenumbers

\usepackage[dvipsnames]{xcolor}
\newcommand{\David}[1]{\textcolor{blue}{\bf David: #1}}
\newcommand{\Tyler}[1]{\textcolor{ForestGreen}{\bf Tyler: #1}}
\newcommand{\diana}[1]{\textcolor{purple}{\bf Diana: #1}}
\newcommand{\paul}[1]{\textcolor{teal}{\bf Paul: #1}}
\newcommand{\chris}[1]{\textcolor{blue}{\bf Chris: #1}}
\newcommand{\TODO}[1]{\textcolor{red}{\bf TODO: #1}}

\newcommand\numberthis{\addtocounter{equation}{1}\tag{\theequation}}

\ifpdf
  \DeclareGraphicsExtensions{.eps,.pdf,.png,.jpg}
\else
  \DeclareGraphicsExtensions{.eps}
\fi

\newcommand{\creflastconjunction}{, and~}


\newtheorem{definition}{Definition}
\newtheorem{theorem}{Theorem}
\newtheorem{lemma}[theorem]{Lemma}
\newtheorem{corollary}[theorem]{Corollary}
\newdefinition{remark}{Remark}
\newproof{pf}{Proof}
\newproof{pot}{Proof of Theorem \ref{thm}}

%% file: intro.tex
\section{Introduction}
Butterfly matrices are a structured matrix format arising throughout scientific computing. They generalize the underlying algebraic structure of the Fast Fourier Transform and are widely used to approximate oscillatory operators, including Fourier integral operators \cite{candes2009fast}, high-frequency acoustic and electromagnetic scattering operators \cite{michielssen1996multilevel,Guo2017,LiuXingGuo:2021}, and Hankel, spherical harmonic, and other special function transforms \cite{oneil2010algorithm,tygert2010fast,seljebotn2012wavemoth,beckman2026butterfly}.
All of these operators are dense, but have so-called \emph{complementary low-rank structure}, meaning that certain submatrices determined by a hierarchical partition of the row and column indices are approximately low-rank \cite{li2015butterfly}. A butterfly approximation takes advantage of that structure to obtain a more compact and efficient representation. In particular, when the rank parameter is fixed, an $N\times N$ butterfly matrix can be represented as a product of $O(\log(N))$ sparse factors, each with $O(N)$ nonzero entries. As such, it takes just $O(N\log(N))$ storage and can be rapidly applied to a vector in only $O(N \log(N))$ operations.

\subsection{Butterfly Approximation}
In some applications, problem-specific knowledge can be used to construct an accurate butterfly approximation for a given target matrix. In other settings, however, such information is unavailable, incomplete, or too costly to exploit. One instead seeks to find a butterfly approximation for a given matrix $\bm{A}$ using only black-box access to its entries or its action on vectors. This agnostic approximation problem arises throughout scientific computing, and in recent applications of butterfly approximation in other areas, like neural network compression and fine-tuning~\cite{dao2019learning,liu2024parameter}. 
Entry access is appropriate when $\bm{A}$ is stored as a dense matrix, or when its entries can be calculated efficiently using some formula~\cite{oneil2010algorithm,tygert2010fast,seljebotn2012wavemoth,li2015butterfly,beckman2026butterfly}.
Matrix-vector product access, where the algorithm interacts with $\bm{A}$ via black-box ``matvec queries'' of the form $\bm{x}\mapsto \bm{A}\bm{x}$ and $\bm{x} \mapsto \bm{A}^\T\bm{x}$, is natural when the action of $\bm{A}$ and $\bm{A}^\T$ can be quickly computed using an existing analysis-based fast algorithm, or when the matrix is already represented in a structured format~\cite{li2015butterfly,Guo2017,LiuXingGuo:2021}.

No matter the access model, the central goal is to develop algorithms for constructing a butterfly approximation for a matrix $\bm{A}$ that do not rely on the matrix's origin or structure. Concretely, in this work, we are interested in bounding the error of the approximation returned by such an algorithm \emph{relative to the best butterfly approximation of the matrix}. 

Despite a long line of work on butterfly approximation, until very recently, existing methods failed to provide such a strong theoretical guarantee \cite{oneil2010algorithm,li2015butterfly,Pang2020,LiuXingGuo:2021,zheng2023efficient}. This limitation was resolved in a breakthrough result of Le, Zheng, Riccietti, and Gribonval \cite{Le2025}, which studies a broader class of ``deformable'' butterfly matrices. That work provides the first efficient algorithm that, given entry access to a matrix $\bm{A}$, returns a butterfly approximation whose error is within a multiplicative factor of the best possible -- specifically, within $O(\sqrt{\log(N)})$ of optimal.

\subsection{Our Contributions}
Our work builds on the progress in \cite{Le2025} by developing and analyzing an alternative class of butterfly approximation algorithms that also enjoy  strong multiplicative error guarantees. We do so by revisiting the butterfly approximation problem from a new perspective, presenting a recursive characterization of butterfly matrices in \Cref{section:butterfly_def}. While the recursive nature of butterfly matrices has been noted in prior literature \cite{li2017interpolative}, our treatment goes a step further. We show that butterfly matrices can be stored as recursive data structures, rather than the standard product of sparse factors. While equivalent in representative power, our recursive presentation significantly simplifies the description and theoretical analysis of butterfly algorithms. In particular, we leverage it to obtain two algorithmic results:
\begin{itemize}
    \item In \cref{sec:greedy_meta}, we describe a simple recursive algorithm that accesses entries of $\bm{A}$ directly and uses $O(N^2)$ computation to return a butterfly approximation $\bm{B}$ to $\bm{A}$ whose error $\|\bm{A} - \bm{B}\|_\F$ is at most $O(\sqrt{\log(N)})$ times worse than the best possible butterfly approximation to $\bm{A}$ (see \cref{theorem:meta_butterfly}). 
    This matches the runtime and accuracy guarantee of the algorithm described in \cite{Le2025}.
    \item In \cref{section:matvec}, we describe an algorithm
    that accesses $\bm{A}$ via $\widetilde{O}(\sqrt{N})$ matvec queries, uses $\widetilde{O}(N^{3/2})$ additional computation, and $\widetilde{O}(N)$ working memory, and returns a butterfly approximation whose error is at most $O(N^{1/4})$ times the best possible (see \cref{theorem:matvecmain}). The $\widetilde{O}(\cdot)$ notation suppresses polynomial factors in $L = O(\log(N))$. The precise hidden factors, including their dependence on the failure probability, are controlled by an oversampling parameter and are made explicit in \Cref{section:matvec}. Our algorithm can be viewed as a reformulation of the popular algorithm developed in \cite{Guo2017,LiuXingGuo:2021}.\footnote{We remark that any entry-access algorithm for computing quasi-optimal butterfly approximation can be converted into an algorithm using $O(\sqrt{N})$ matvecs. The resulting reduction, however, requires $O(N^{3/2})$ working memory; see \Cref{remark:reduction}.}
\end{itemize}
Beyond these main results, we provide an open source Julia package\footnote{\label{fn:software}\tt\url{https://github.com/pbeckman/RecursiveButterfly.jl}} which includes implementations of both algorithms, as well as rank-adaptive generalizations to rectangular matrices $\bm{A} \in \C^{M \times N}$. This package demonstrates the simplicity of working with butterfly matrices in our recursive format. Further details and numerical demonstrations are provided in \cref{sec:experiments}.

\subsection{Past work}
The butterfly factorization and its underlying complementary low-rank property were formalized in \cite{oneil2010algorithm,li2015butterfly}, building on earlier butterfly algorithms for oscillatory operators \cite{michielssen1996multilevel,candes2009fast}. \cite{li2015butterfly} introduced a randomized algorithm for butterfly matrix approximation based on matrix-vector products that uses $O(\sqrt{N})$ matvecs and $O(N^{3/2})$ working memory. Subsequently, \cite{Guo2017} introduced an algorithm that also uses $O(\sqrt{N})$ matvecs, but requires only $O(N\log(N))$ working memory.
In~\cite{LiuXingGuo:2021} an adaptive, parallel implementation is described. As noted above, and expanded on in \cref{sec:mapbetweenformats}, our memory-efficient matvec algorithm is essentially equivalent to the algorithm of \cite{LiuXingGuo:2021}. However, while that work provides some theoretical analysis that relates the accuracy of the butterfly approximation produced to the error of  low-rank approximations used within the algorithm, it falls short of providing a strong guarantee comparing the overall approximation error to that of the \emph{best possible} butterfly approximation. Our work fills this gap, providing the first theoretical quasi-optimality guarantee for the algorithm of \cite{LiuXingGuo:2021}.

Also related to our work is \cite{Pang2020}, which describes a random access butterfly algorithm based on the interpolative decomposition.
This algorithm runs in $O(N\log (N))$ time, but cannot be guaranteed to obtain a good approximation in the worst-case, even when the input $\bm{A}$ is itself a butterfly matrix. Notably however, like our work, that algorithm exploits the recursive nature of butterfly matrices.

Our work also builds on the recent paper \cite{amsel2025quasi} which studies algorithms for hierarchical semi-separable (HSS) matrices.
In particular, we make use of bounds for a class of matrices, called $\BLR^2$ in \cite{amsel2025quasi}.
These matrices are a superset of butterfly (and HSS) matrices; see \cref{remark:blr2}. 
More broadly, our work is part of an ongoing line of research which provides similar approximation guarantees for structured matrix approximation algorithms, particularly in the matvec query model.
In this context, the most widely studied family of matrices are low-rank matrices \cite{HalkoMartinssonTropp:2011, BakshiClarksonWoodruff:2022,BakshiNarayanan:2023,SimchowitzElAlaouiRecht:2018}.
Recent work has also considered approximation algorithms for classes such as sparse matrices \cite{CurtisPowellReid:1974,ColemanMore:1983, ColemanCai:1986,WimalajeewaEldarVarshney:2013,DasarathyShahBhaskarNowak:2015,BekasKokiopoulouSaad:2007,TangSaad:2011,BastonNakatsukasa:2022,DharangutteMusco:2023,park_nakatsukasa_23,AmselChenDumanKelesHalikiasMuscoMusco:2026,musco2026nearlyinstanceoptimalsparse}, 
hierarchical matrices \cite{ChenDumanKelesHalikiasMuscoMuscoPersson:2025,amsel2025quasi}, and more \cite{WatersSankaranarayananBaraniuk:2011,SchaferKatzfussOwhadi:2021,LiuXingGuo:2021, kapralov2023toeplitz,Amsel2026Query}.

%% file: prelims.tex
\section{Background and notation}
This section introduces the necessary notation and provides definitions of the matrix classes used in this work.

\subsection{Notation}
For $m,n \in \mathbb{N}$ with $m \geq n$, we define $[n;m] := \{n,n+1,\ldots,m-1,m\}$. If $n = 1$, we write $[1;m] = [m]$. For two index sets $I$ and $J$ and a matrix $\bm{M}\in \mathbb{R}^{m \times n}$, we denote by $\bm{M}(I,J)$ the submatrix of $\bm{M}$ corresponding to the row and column indices in $I$ and $J$. We define $\bm{M}(:,J) := \bm{M}([m],J)$ and $\bm{M}(I,:) = \bm{M}(I,[n])$. 
The Frobenius norm is denoted by $\| \bm M \|_\F$. 
We denote the best rank-$k$ approximation to a matrix $\bm M$ in any unitarily invariant norm as $\llbracket\bm{M}\rrbracket_k$.
This approximation can be obtained by the truncated SVD as described by the Eckart--Young Theorem.  
The optimal rank-$k$ approximation error in the Frobenius norm is denoted by $\mathcal{E}_k(\bm{B}) := \|\bm{B} - \llbracket\bm{B}\rrbracket_k\|_\F$.
We write the transpose of $\bm{M}$  as $\bm M ^\T $ and the Moore-Penrose pseudoinverse as $\bm{M}^{\dagger}$. 
Throughout, $\gaussian(m, n)$ denotes the distribution of $m \times n$ matrices whose entries are independent standard normal random variables.
We denote by $\bm{I}_{n}$ the $n \times n$ identity matrix. We denote by $\blockdiag(\bm{B}_1,\ldots,\bm{B}_m)$ the block-diagonal matrix with diagonal blocks $\bm{B}_1,\ldots,\bm{B}_m$. For a block-diagonal matrix $\bm{B} \in \mathbb{R}^{m \times n}$, where $m$ and $n$ are even, we denote by $\bm{B}_{[1]}, \bm{B}_{[2]} \in \mathbb{R}^{m/2 \times n/2}$ the upper and lower $m/2 \times n/2$ diagonal blocks of $\bm{B}$, respectively. In other words, $\bm{B} = \blockdiag(\bm{B}_{[1]},\bm{B}_{[2]})$.

\subsection{Binary partition trees}
To define butterfly matrices, we define a collection of subsets of $[2^{L+1}k]$ to refer to certain submatrices.
\begin{definition}
\label[definition]{def:index_sets}
    Let $L,k\in\mathbb{N}$ be fixed.
    For any $\ell=0,\ldots, L$ and $i=1, \ldots, 2^\ell$, define the index set
    \begin{equation*}
        I_{i,\ell} := [(i-1)2^{L+1-\ell}k + 1; i2^{L+1-\ell}k].
    \end{equation*}
\end{definition}
Observe that 
\[
I_{i,\ell-1} = I_{2i-1,\ell} \cup I_{2i,\ell},
\]
and hence these index sets correspond to a tree where the node $I_{i,\ell-1}$ has children $I_{2i-1,\ell}$ and $I_{2i,\ell}$. 
We use $\mathcal{T} = \{I_{i,\ell}\}$ to denote the set of these nodes. 
This tree is visualized in \Cref{fig:tree}.

\begin{figure}
    \centering
    \input{tree}
    \caption{A perfect dyadic partition tree corresponding to the index sets defined in \cref{def:index_sets} for $L = 3$.}
    \label{fig:tree}
\end{figure}

\subsection{Butterfly matrices}
\label{section:butterfly_def}
We define butterfly matrices using the so-called \emph{complementary low-rank property}; see for example \cite{li2015butterfly}.
\begin{definition}[Butterfly matrix]\label[definition]{def:butterfly}
    Let $L, k\in \mathbb{N}$ be fixed, and assume that $L$ is even. 
    Define the index sets $I_{i,\ell}$ as in \cref{def:index_sets}.
    A matrix $\bm{B} \in \mathbb{R}^{2^{L+1}k  \times 2^{L+1}k}$ is said to be a level-$L$ butterfly matrix with butterfly rank $k$ if for all $\ell = 0,\ldots,L$, $i = 1,\ldots,2^{\ell}$, and $j = 1,\ldots,2^{L-\ell}$, we have 
    \[
        \rank(\bm{B}(I_{i,\ell},I_{j,L-\ell})) \leq k.
    \]    
    The set of such matrices is denoted as $\butterfly(L,k)$. The pair of index sets $(I_{i,\ell},I_{j,L-\ell})$ are called \emph{complementary pairs} and the submatrix $\bm{B}(I_{i,\ell},I_{j,L-\ell})$ is called a \emph{complementary low-rank block}. Note that in the base case of $L = 0$, where $\bm {B} \in \R^{2k \times 2k}$, $\bm {B}$ is a butterfly matrix if and only if it is rank at most $k$. 
\end{definition}

We note that we assume the blocks are exactly low-rank, rather than numerically as is done in~\cite{li2015butterfly}. 
This is necessary for a true equivalence between the complementary low-rank property and other definitions of butterfly matrices, including the sparse factorization often used in practice. 

We illustrate the complementary low-rank structure in \cref{fig:CLR}. 
While \cref{def:butterfly} assumes that the binary partition tree is a perfect dyadic tree with leaf-nodes of cardinality $2k$, the definition can be trivially extended to butterfly matrices with respect to general binary partition trees.

\begin{remark}[Why $N=2^{L+1}k$?]
    For notational simplicity, we take the matrix dimension to be $N = 2^{L+1} k$ for some fixed $L$ and $k$ and use a perfect dyadic partition tree, where the leaf nodes contain $2k$ contiguous indices.
    The algorithms and analysis developed in this paper extend to  more general settings, which we describe in \cref{sec:experiments}.
\end{remark}

\begin{figure}
    \centering
    \input{imgs/CLR}
    \caption{Illustration of the submatrices $\bm{B}(I_{i,\ell},I_{j,L-\ell})$ from \cref{def:butterfly}.
    The complementary low-rank property asserts these submatrices are all of rank at most $k$.
    Here $L = 4$, and the boxes in the light grey grid are of size $2k\times 2k$.}
    \label{fig:CLR}
\end{figure}

\subsection{A recursive characterization of butterfly matrices}

In this paper, we use an equivalent recursive definition of butterfly matrices.
While the recursive structure of butterfly matrices is implicit in the literature \cite{li2015butterfly,Pang2020}, we are unaware of any past work that makes the following observation explicit.

\begin{theorem}[Recursive factorization of butterfly matrices]\label{theorem:recursive}
    Let $L, k\in \mathbb{N}$ be fixed, and assume that $L$ is even. Consider a matrix $\bm{B} \in \mathbb{R}^{2^{L+1}k \times 2^{L+1}k}$. Then, $\bm{B} \in \butterfly(L,k)$ if and only if
    \begin{enumerate}
        \item when $L = 0$,  $\rank(\bm{B}) \leq k$, and can thus be written as $\bm{B} = \bm{U} \bm{X} \bm{V}^\T$, where $\bm{U}, \bm{V} \in \mathbb{R}^{2k \times k}$ and $\bm{X} \in \mathbb{R}^{k \times k}$,
        \item when $L > 0$, 
        \begin{equation*}
            \bm{B} = \bm{U} \bm{X} \bm{V}^\T,
        \end{equation*}
        where 
        \begin{align*}
                \bm{U} &= \blockdiag(\bm{U}_1,\ldots,\bm{U}_{2^L}), \quad \bm{U}_i \in \mathbb{R}^{2k \times k} \text{ has orthonormal columns},\\
            \bm{V} &= \blockdiag(\bm{V}_1,\ldots,\bm{V}_{2^L}), \quad \bm{V}_i \in \mathbb{R}^{2k \times k} \text{ has orthonormal columns},
            \end{align*}
            and $\bm{X} \in \mathbb{R}^{2^{L}k \times 2^L k}$ can be partitioned as
            \begin{equation*}
                \bm{X} = \begin{bmatrix} \bm{X}_{1,1} & \bm{X}_{1,2} \\ 
                \bm{X}_{2,1} & \bm{X}_{2,2} \end{bmatrix},
            \end{equation*}
            where for $s,t = 1,2$, $\bm{X}_{s,t} \in \butterfly(L-2,k)$.
        \end{enumerate}
    In particular, since this result can be applied at every level $L$, it results in a recursive representation of butterfly matrices.
\end{theorem}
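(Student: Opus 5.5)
We prove the two directions separately. Throughout, $N=2^{L+1}k$, and we write $I_{i,\ell}$ for the index sets of \cref{def:index_sets} at level $L$ and $I'_{i,\ell}$ for those at level $L-2$ with the same $k$. The case $L=0$ is immediate from \cref{def:butterfly}: a rank-$k$ matrix has a factorization $\bm{U}\bm{X}\bm{V}^\T$ (for instance from the truncated SVD), and conversely such a product has rank at most $k$.

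\textbf{Necessity ($\Rightarrow$), $L>0$.} We construct $\bm{U}$ and $\bm{V}$ from the extreme levels of the complementary low-rank property. Taking $\ell=L$, the row block $\bm{B}(I_{i,L},:)$ is the horizontal concatenation of the blocks $\bm{B}(I_{i,L},I_{1,0})$, and since $I_{1,0}=[N]$ this block has rank at most $k$. Hence there is $\bm{U}_i\in\mathbb{R}^{2k\times k}$ with orthonormal columns whose range contains the column space of $\bm{B}(I_{i,L},:)$; if the rank is smaller than $k$, pad with extra orthonormal columns. Symmetrically, $\ell=0$ gives $\bm{V}_j$ spanning the row space of $\bm{B}(:,I_{j,L})$. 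Then $\bm{B}=\bm{U}\bm{U}^\T\bm{B}\bm{V}\bm{V}^\T$, and we define $\bm{X}:=\bm{U}^\T\bm{B}\bm{V}\in\mathbb{R}^{2^Lk\times 2^Lk}$.

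The key step is an index bookkeeping identity. Row block $i$ of $\bm{X}$ (size $k$) corresponds to the leaf $I_{i,L}$ of size $2k$, so the rows of $\bm{X}$ indexed by $I'_{i,\ell}$ (level $L-2$) correspond to the rows of $\bm{B}$ indexed by $I_{i,\ell}$ (level $L$), for $\ell\le L-2$. Both sets consist of $2^{L-2-\ell}$ leaves: of size $k$ in $\bm{X}$ and of size $2k$ in $\bm{B}$. The same holds for columns. Consequently, for the top-left block $\bm{X}_{1,1}$ we get
\[
\bm{X}_{1,1}(I'_{i,\ell},I'_{j,L-2-\ell}) = (\text{blockdiag of } \bm{U}_\cdot^\T)\,\bm{B}(I_{i,\ell+1},I_{j,L-1-\ell})\,(\text{blockdiag of } \bm{V}_\cdot),
\]
because $\bm{X}_{1,1}$ occupies the first half of the rows and columns, which shifts the level by one. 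The pair $(\ell+1, L-(\ell+1))$ is complementary at level $L$, so this block has rank at most $k$. The other three blocks $\bm{X}_{s,t}$ are handled in the same way after translating the indices by the appropriate half. Hence $\bm{X}_{s,t}\in\butterfly(L-2,k)$ for all $s,t$.

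\textbf{Sufficiency ($\Leftarrow$).} Given the factorization, we reverse the computation. For $1\le \ell\le L-1$, the block $\bm{B}(I_{i,\ell},I_{j,L-\ell})$ equals a block-diagonal matrix built from the $\bm{U}$ blocks, times one complementary block of some $\bm{X}_{s,t}$ at level $(\ell-1, L-2-(\ell-1))$, times a block-diagonal matrix built from the $\bm{V}^\T$ blocks. The middle factor has rank at most $k$, so the whole block does. For $\ell=0$, the block $\bm{B}(:,I_{j,L})$ equals $\bm{U}\bm{X}(:,\cdot)\bm{V}_j^\T$, which has rank at most $k$ because $\bm{V}_j$ has $k$ columns. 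The case $\ell=L$ is symmetric, using $\bm{U}_i$.

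\textbf{Main obstacle.} The only delicate part is verifying the index correspondence: that the level-$\ell$ sets of $\bm{X}_{s,t}$ map exactly to the level-$(\ell+1)$ sets of $\bm{B}$ while the column level shifts compatibly, so that the pair stays complementary. We would check this explicitly from the formula $I_{i,\ell}=[(i-1)2^{L+1-\ell}k+1;\,i2^{L+1-\ell}k]$, including the offsets contributed by $s$ and $t$.
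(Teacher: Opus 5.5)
Your proposal is correct and follows essentially the same route as the paper: extreme-level bases $\bm{U},\bm{V}$, $\bm{X}=\bm{U}^\T\bm{B}\bm{V}$, and a correspondence sending each complementary pair $(I'_{i,\ell},I'_{j,L-2-\ell})$ of $\bm{X}_{s,t}$ to a level-$(\ell+1,L-1-\ell)$ complementary pair of $\bm{B}$ (and back); this is exactly the bookkeeping the paper formalizes with $\leftfold$, $\side$, $\halving$ and \Cref{lemma:indices,lemma:direction1,lemma:direction2}. One slip is in the heuristic before your display: with $I'$ the level-$(L-2)$ tree on $[2^{L-1}k]$, the rows of $\bm{X}_{1,t}$ indexed by $I'_{i,\ell}$ form $2^{L-1-\ell}$ (not $2^{L-2-\ell}$) blocks of size $k$ and correspond directly to the rows $I_{i,\ell+1}$ of $\bm{B}$ (not $I_{i,\ell}$), but the identity you actually display and use is the correct one.
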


The basic intuition for the proof of \cref{theorem:recursive} is easily understood and  explained visually in \cref{fig:recursive_def}. 
The complete proof of \cref{theorem:recursive} is somewhat tedious, so we defer it to \cref{section:recursive}.
One of the main advantages of this formulation is that it makes the nested structure explicit. A butterfly matrix can be represented as a recursive data structure, which significantly simplifies the implementation of algorithms.

\begin{figure}
    \centering
    \input{imgs/recursive_def}
    \caption{Let $\bm{B} \in \butterfly(L,k)$. The complementary low-rank property (\Cref{def:butterfly}) at the extreme levels ($\ell = 0$ and $\ell = L$) implies $\bm{B} \in \bsep(L,k)$ (see \Cref{def:bsep}), and hence gives a factorization $\bm{B} = \bm{U}\bm{X}\bm{V}^\T$, where $\bm{U}$ and $\bm{V}$ are block-diagonal matrices with orthonormal columns.
    \Cref{theorem:recursive} asserts that $\bm{X}$ is a $2\times 2$ block-matrix, where each block is in $\butterfly(L-2,k)$.
    That is, each block in the $2 \times 2$ structure of $\bm{X}$ satisfies the complementary low-rank property. 
    Choose any complementary low-rank block of one of the blocks of $\bm{X}$.
    Examples of some submatrices that must be low-rank are highlighted.
    Observe that, due to the block-diagonal structure of $\bm{U}$ and $\bm{V}$, submatrices in $\bm{X}$ are mapped to submatrices in $\bm{B}$, which by the complementary low-rank property on $\bm{B}$, have rank at most $k$.
    Moreover, denoting the colored submatrices of $\bm{B}$ and $\bm{X}$ respectively by $\widehat{\bm{B}}$ and $\widehat{\bm{X}}$, we see that, $\widehat{\bm{B}} = \widehat{\bm{U}}\widehat{\bm{X}}\widehat{\bm{V}}^\T$, where $\widehat{\bm{U}}$ and $\widehat{\bm{V}}$ have orthonormal columns.
    This implies that $\rank(\widehat{\bm{X}}) = \rank(\widehat{\bm{B}}) \leq k$, thus giving that the colored submatrices of $\bm{X}$ have rank at most $k$. }
    \label{fig:recursive_def}
\end{figure}

\begin{remark}[Relation to sparse factorization]
    Butterfly matrices can also be defined in terms of an equivalent ``product of sparse factors'' characterization; see e.g. \cite{li2015butterfly}. 
    This  definition naturally corresponds to an efficient representation of butterfly matrices that is used in many existing algorithms for butterfly matrices.
    The recursive definition described above simplifies some of the book-keeping necessary for the sparse representation.
    We discuss the conversion between the recursive characterization from \cref{theorem:recursive} and the product of sparse factors characterization in \cref{sec:mapbetweenformats}.
\end{remark}

\begin{remark}[Memory and complexity requirements]
    Storing a matrix using the recursive format outlined in \Cref{theorem:recursive} requires $O(Nk\log(N/k))$ units of memory. Furthermore, one can apply a vector to a matrix in $\butterfly(L,k)$ using a recursive algorithm requiring $O(Nk\log(N/k))$ operations. 
    This coincides with complexities of butterfly matrices stored using a product of sparse factors. 
\end{remark} 

Our algorithm for butterfly approximation builds on an algorithm for matrix approximation from a class of matrices, which we denote by $\bsep(L,k)$. This class consists of all matrices satisfying the complementary low-rank property at the extreme levels $\ell = 0$ and $\ell = L$, where the blocks in the partition are either as long and skinny as possible, or as short and wide as possible (see, e.g.,  $\ell = 0$ and $\ell = 4$ in \Cref{fig:CLR}). Our algorithm will recursively approximate a given matrix using matrices from $\bsep(L,k)$.
\begin{definition}[$\bsep$]\label[definition]{def:bsep}
Let $L, k\in \mathbb{N}$ be fixed and assume that $L$ is even. Consider a matrix $\bm{B} \in \mathbb{R}^{2^{L+1}k \times 2^{L+1}k}$. We say $\bm{B} \in \bsep(L,k)$ if $\bm{B} = \bm{U} \bm{X} \bm{V}^\T$
where 
        \begin{align*}
                \bm{U} &= \blockdiag(\bm{U}_1,\ldots,\bm{U}_{2^L}), \quad \bm{U}_i \in \mathbb{R}^{2k \times k} \text{ has orthonormal columns},\\
            \bm{V} &= \blockdiag(\bm{V}_1,\ldots,\bm{V}_{2^L}), \quad \bm{V}_i \in \mathbb{R}^{2k \times k} \text{ has orthonormal columns},
            \end{align*}
            and $\bm{X} \in \mathbb{R}^{2^{L}k \times 2^L k}$. Equivalently, using the notation from \Cref{def:index_sets}, $\bm{B} \in \bsep(L,k)$ if and only if $\rank(\bm{B}(I_{i,L},I_{1,0})), \rank(\bm{B}(I_{1,0},I_{i,L})) \leq k$ for all $i = 1,\ldots,2^{L}$.
\end{definition}

\begin{remark}
\label[remark]{remark:blr2}
    The set of $\bsep$ matrices is a special case of the set of $\BLR^2$ matrices, which appear in the context of numerical solutions of integral equations \cite{ChandrasekaranDewildeGuPalsvanderVeen:2002,AshcraftButtariMary:2021,gillman2012direct,martinsson2005fast}. In particular, using the notation of \cite[Appendix C]{amsel2025quasi}, $\bsep(L,k) = \BLR^2(2^L,2k,k,\varnothing)$. Furthermore, note that $\butterfly(L,k) \subseteq \bsep(L,k)$.
\end{remark}

%% file: tree.tex
\begin{tikzpicture}

\def\L{3}          
\def\k{2}          
\def\boxheight{.6}
\def\rowsep{.8}

\pgfmathtruncatemacro{\N}{2^\L}

\foreach \l in {0,...,\L} {
    \pgfmathtruncatemacro{\numintervals}{2^\l}
    \pgfmathtruncatemacro{\width}{2^(\L-\l)}
    \pgfmathsetmacro{\y}{(\L-\l)*\rowsep}

    \foreach \i in {1,...,\numintervals} {

        \pgfmathsetmacro{\xl}{(\i-1)*\width}
        \pgfmathsetmacro{\xr}{\i*\width}

        \draw (\xl,\y) rectangle (\xr,\y+\boxheight);

        \node at ({(\xl+\xr)/2},{\y+0.5*\boxheight})
            {$I_{\i,\l}$};
    }

    \node[left] at (-.5,\y+0.5*\boxheight)
        {$\ell=\l$};
}

\node[below,anchor=north west,scale=.5] at (0,0) {$1$};
\node[below,anchor=north east,scale=.5] at (1,0) {$2k$};

\node[below,anchor=north west,scale=.5] at (1,0) {$2k+1$};
\node[below,anchor=north east,scale=.5] at (2,0) {$4k$};

\node[below,anchor=north west,scale=.5] at (2,0) {$4k+1$};

\node[below,anchor=north east,scale=.5] at ({2^\L},0) {$2^Lk$};

\draw[dotted] (.25,-.18) -- (.75,-.18);
\draw[dotted] (1.6,-.18) -- (1.7,-.18);
\draw[dotted] (2.7,-.18) -- ({2^\L-.45},-.18);

\end{tikzpicture}

%% file: imgs/CLR.tex
\begin{tikzpicture}
  \def\w{2}
  \def\h{2}
  \def\gap{0.4}
  \def\nboxes{5}
  \def\startcols{16}
  \def\startrows{1}
  \foreach \b in {0,...,\numexpr\nboxes-1\relax}{
    \pgfmathsetmacro{\xoff}{\b*(\w+\gap)}
    \pgfmathtruncatemacro{\ncols}{\startcols/2^\b}
    \pgfmathtruncatemacro{\nrows}{\startrows*2^\b}
    \foreach \i in {1,...,15}{
      \draw[line width=.2pt,black!10] (\xoff+\i*\w/16, 0) -- (\xoff+\i*\w/16, \h);
      \draw[line width=.2pt,black!10] (\xoff, \i*\h/16) -- (\xoff+\w, \i*\h/16);
    }
    \draw[thick] (\xoff,0) rectangle (\xoff+\w,\h);
    \ifnum\ncols>1
      \foreach \i in {1,...,\numexpr\ncols-1\relax}{
        \draw (\xoff+\i*\w/\ncols, 0) -- (\xoff+\i*\w/\ncols, \h);
      }
    \fi
    \ifnum\nrows>1
      \foreach \j in {1,...,\numexpr\nrows-1\relax}{
        \draw (\xoff, \j*\h/\nrows) -- (\xoff+\w, \j*\h/\nrows);
      }
    \fi
    \node[below] at (\xoff+\w/2, -0.1) {$\ell = \b$};
4   }
\end{tikzpicture}

%% file: imgs/recursive_def.tex
\begin{tikzpicture}[scale=.12]
\def\K{16}
\def\Km{15}
\draw[fill=black!10] (0,0) rectangle ({2*\K},{2*\K});
\foreach \x in {1,...,\Km}{
    \draw[line width=.2px,black!20] (0,{2*\x}) -- ({2*\K},{2*\x});
    \draw[line width=.2px,black!20] ({2*\x},0) -- ({2*\x},{2*\K});
}
\draw[] (0,\K) -- ({2*\K},\K);
\draw[] (\K,0) -- (\K,2*\K);
\node[above] at (\K, 2*\K+.2) {$\bm{B}$};
\node[] at ({2*\K+1.5},{\K+\K/2}) [] {=};
\draw[color=c1,line width=1,fill=c1,fill opacity=.35] (0,\K)  rectangle (4,{2*\K});
\draw[color=c2,line width=1,fill=c2,fill opacity=.35] (16,\K)  rectangle (24,{2*\K-8});
\draw[color=c3,line width=1,fill=c3,fill opacity=.35] (0,2*\K-4)  rectangle (16,{2*\K});
\draw[color=c4,line width=1,fill=c4,fill opacity=.35] (12,\K-16)  rectangle (16,{\K});
\draw[color=yellow,line width=1,fill=yellow,fill opacity=.35] (16,8)  rectangle (32,12);

\begin{scope}[shift = {(2*\K+3,0)}]
\foreach \x in {1,...,\Km}{
    \draw[line width=.2px,black!20] (0,{2*\x}) -- (\K,{2*\x});
    \draw[line width=.2px,black!20] (\x,0) -- (\x,{2*\K});
}
\draw[] (0,0) rectangle (\K,{2*\K});
\foreach \x in {1,...,\K}{
    \draw[fill=black!10] (\x-1,{2*\K-2*(\x-1)}) rectangle (\x,{2*\K-2*\x});
}
\draw[] (0,\K) -- ({\K},\K);
\draw[] (\K/2,0) -- (\K/2,2*\K);
\node[above] at (\K/2, 2*\K+.2) {$\bm{U}$};
\begin{scope}[shift = {(\K+1,\K)}]
\draw[fill=black!10] (0,0) rectangle (\K,\K);
\foreach \x in {2,4,...,\Km}{
    \draw[line width=.2px,black!20] (0,\x) -- (\K,\x);
    \draw[line width=.2px,black!20] (\x,0) -- (\x,\K);
}
\draw[] (0,\K/2) -- ({\K},\K/2);
\draw[] (\K/2,0) -- (\K/2,\K);
\draw[color=c1,line width=1,fill=c1,fill opacity=.35] (0,\K/2)  rectangle (2,{\K});
\draw[color=c2,line width=1,fill=c2,fill opacity=.35] (8,\K/2)  rectangle (12,\K-4);
\draw[color=c3,line width=1,fill=c3,fill opacity=.35] (0,\K-2)  rectangle (8,\K);
\draw[color=c4,line width=1,fill=c4,fill opacity=.35] (6,\K-16)  rectangle (8,{\K-8});
\draw[color=yellow,line width=1,fill=yellow,fill opacity=.35] (8,4)  rectangle (16,6);
\node[above] at (\K/2, \K+.2) {$\bm{X}$};
\end{scope}
\begin{scope}[shift = {(2*\K+2,\K)}]
\foreach \x in {1,...,\Km}{
    \draw[line width=.2px,black!20] (0,\x) -- ({2*\K},\x);
    \draw[line width=.2px,black!20] ({2*\x},0) -- ({2*\x},\K);
}
\draw[] (0,0) rectangle ({2*\K},\K);
\foreach \x in {1,...,\K}{
    \draw[fill=black!10] ({2*\K-2*(\x-1)},\x-1) rectangle ({2*\K-2*\x,\x});
}
\draw[] (0,\K/2) -- ({2*\K},\K/2);
\draw[] (\K,0) -- (\K,\K);
\node[above] at (\K,\K+.2) {$\bm{V}^\T$};
\end{scope}
\end{scope}
\end{tikzpicture}

%% file: meta_algorithm.tex
\section{Near-optimal greedy butterfly approximation}
\label{sec:greedy_meta}

This section is devoted to analyzing a greedy algorithm for butterfly approximation (\Cref{alg:butterfly_greedy_meta}). 
This algorithm makes use of the recursive characterization of butterfly matrices given in \Cref{theorem:recursive}.
The algorithm does a recursive approximation with $\bsep(L,k)$ matrices and is guaranteed to return an approximation with error that is  within a  $O(\sqrt{L})$ factor from the optimal error, where $L = O(\log(N))$ is the number of levels. Along the way, we establish approximation bounds of our algorithm based on the low-rank approximability of the complementary low-rank blocks (\Cref{fig:CLR}). 
We also rely on approximation results for the closely related $\BLR^2$ matrix class established in~\cite{amsel2025quasi}. 
Finally, we note that an $O(\sqrt{L})$ quasi-optimality guarantee was also recently established in \cite{Le2025} using a fundamentally different algorithmic approach. 
\subsection{Approximation with $\bsep$-matrices}\label{section:bsep}
In this section, we present an algorithm for computing near-optimal $\bsep$ approximations; see \Cref{alg:bsep_greedy_meta}. This algorithm will serve as a building block and will be called at every level of our butterfly approximation algorithm introduced in \Cref{section:metaalg}. 
\begin{algorithm}
\caption{Greedy $\bsep$ approximation (meta-algorithm)}
\label{alg:bsep_greedy_meta}
\textbf{input:} Matrix $\bm{A} \in \mathbb{R}^{2^{L+1}k \times 2^{L+1}k}$, rank parameter $k$.\\
\textbf{output:} An approximation $\bm{B} \in \bsep(L,k)$ with factorization as in \Cref{def:bsep}.
\begin{algorithmic}[1]
\For{$i = 1,\ldots,2^{L}$}
\State Compute $\bm{U}_{i}, \bm{V}_i \in \mathbb{R}^{2k \times k}$ with orthonormal columns approximately minimizing 
\begin{equation*}
    \|\bm{A}(I_{i,L},:) - \bm{U}_i \bm{U}_i^\T\bm{A}(I_{i,L},:)\|_\F, \quad \|\bm{A}(:,I_{i,L}) - \bm{A}(:,I_{i,L})\bm{V}_i \bm{V}_i^\T\|_\F.
\end{equation*}
\EndFor
\State Set 
\begin{equation*}
    \bm{U} = \blockdiag(\bm{U}_1,\ldots,\bm{U}_{2^L}), \quad \bm{V} = \blockdiag(\bm{V}_1,\ldots,\bm{V}_{2^L}).
\end{equation*}
\State Define $\bm{X} = \bm{U}^\T \bm{A} \bm{V}$.
\State \Return $\bm{B} = \bm{U} \bm{X} \bm{V}^\T$ as defined in  \Cref{def:bsep}. 
\end{algorithmic}
\end{algorithm}
Noting that $\bsep(L,k) = \BLR^2(2^L,2k,k,\varnothing)$, an application \cite[Theorem C.3 and Remark 3.6]{amsel2025quasi} immediately yields the following guarantee.
\begin{theorem}\label{theorem:bsep}
    Suppose for $i = 1,\ldots,2^L$ the orthonormal bases $\bm{U}_i, \bm{V}_i \in \mathbb{R}^{2k \times k}$ from \Cref{alg:bsep_greedy_meta} satisfy
    \begin{align*}
        &\|\bm{A}(I_{i,L},:)-\bm{U}_i \bm{U}_i^\T\bm{A}(I_{i,L},:)\|_\F^2 \leq \Gamma \|\bm{A}(I_{i,L},:) - \llbracket \bm{A}(I_{i,L},:)\rrbracket_k\|_\F^2,\\
        &\|\bm{A}(:,I_{i,L})-\bm{A}(:,I_{i,L})\bm{V}_i \bm{V}_i^\T\|_\F^2 \leq \Gamma \|\bm{A}(:,I_{i,L}) - \llbracket \bm{A}(:,I_{i,L})\rrbracket_k\|_\F^2. 
    \end{align*}
    Then the output $\bm{B} \in \bsep(L,k)$ satisfies
    \begin{equation*}
        \|\bm{A} - \bm{B}\|_\F^2 \leq 2\Gamma \min\limits_{\bm{C} \in \bsep(L,k)}\|\bm{A} - \bm{C}\|_\F^2.
    \end{equation*}
    Furthermore, if
    \begin{equation*}
         \varepsilon_{i,\mathrm{r}} := \|\bm{A}(I_{i,L},:)-\llbracket \bm{A}(I_{i,L},:)\rrbracket_k\|_\F  \quad \text{and} \quad
        \varepsilon_{i,\mathrm{c}} := \|\bm{A}(:,I_{i,L})-\llbracket \bm{A}(:,I_{i,L})\rrbracket_k\|_\F
     \end{equation*}
     then
     \begin{equation*}
         \frac{1}{2}\sum\limits_{i=1}^{2^L} \left[\varepsilon_{i,\mathrm{r}}^2 + \varepsilon_{i,\mathrm{c}}^2\right] \leq \min\limits_{\bm{C} \in \bsep(L,k)} \|\bm{A} - \bm{C}\|_\F^2 \leq \|\bm{A} - \bm{B}\|_\F^2 \leq \Gamma \sum\limits_{i=1}^{2^L} \left[\varepsilon_{i,\mathrm{r}}^2 + \varepsilon_{i,\mathrm{c}}^2\right].
     \end{equation*}
\end{theorem}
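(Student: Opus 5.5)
The plan is a direct self-contained argument for the three inequalities, rather than simply citing \cite{amsel2025quasi}; the proof rests on a lower bound, an orthogonal decomposition, and a Pythagorean identity. Write $\bm{P}_U = \bm{U}\bm{U}^\T$ and $\bm{P}_V = \bm{V}\bm{V}^\T$. These are orthogonal projectors because $\bm{U}$ and $\bm{V}$ are block diagonal with orthonormal diagonal blocks. Then $\bm{B} = \bm{P}_U \bm{A} \bm{P}_V$.

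\emph{Lower bound.} Let $\bm{C} \in \bsep(L,k)$. By the equivalent characterization in \Cref{def:bsep}, each block row $\bm{C}(I_{i,L},:)$ and each block column $\bm{C}(:,I_{i,L})$ has rank at most $k$. The row blocks $\bm{A}(I_{i,L},:)$ partition the entries of $\bm{A}$, so Eckart--Young gives
\begin{equation*}
\|\bm{A}-\bm{C}\|_\F^2=\sum_i \|\bm{A}(I_{i,L},:)-\bm{C}(I_{i,L},:)\|_\F^2 \ge \sum_i \varepsilon_{i,\mathrm{r}}^2 .
\end{equation*}
The same argument over the column blocks gives $\|\bm{A}-\bm{C}\|_\F^2 \ge \sum_i \varepsilon_{i,\mathrm{c}}^2$. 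Averaging the two bounds yields $\tfrac12\sum_i[\varepsilon_{i,\mathrm{r}}^2+\varepsilon_{i,\mathrm{c}}^2] \le \min_{\bm{C}}\|\bm{A}-\bm{C}\|_\F^2$. The middle inequality in the chain is trivial, since $\bm{B}\in\bsep(L,k)$.

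\emph{Upper bound.} I decompose
\begin{equation*}
\bm{A}-\bm{B} = (\bm{I}-\bm{P}_U)\bm{A} + \bm{P}_U\bm{A}(\bm{I}-\bm{P}_V).
\end{equation*}
The two terms are Frobenius-orthogonal because their ranges lie in complementary subspaces. Hence
\begin{equation*}
\|\bm{A}-\bm{B}\|_\F^2 = \|(\bm{I}-\bm{P}_U)\bm{A}\|_\F^2 + \|\bm{P}_U\bm{A}(\bm{I}-\bm{P}_V)\|_\F^2 \le \|(\bm{I}-\bm{P}_U)\bm{A}\|_\F^2+\|\bm{A}(\bm{I}-\bm{P}_V)\|_\F^2,
\end{equation*}
where the inequality holds because $\bm{P}_U$ is a contraction. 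Since $\bm{P}_U$ is block diagonal,
\begin{equation*}
\|(\bm{I}-\bm{P}_U)\bm{A}\|_\F^2=\sum_i\|\bm{A}(I_{i,L},:)-\bm{U}_i\bm{U}_i^\T\bm{A}(I_{i,L},:)\|_\F^2\le\Gamma\sum_i\varepsilon_{i,\mathrm{r}}^2
\end{equation*}
by the hypothesis. Symmetrically, $\|\bm{A}(\bm{I}-\bm{P}_V)\|_\F^2 \le \Gamma\sum_i\varepsilon_{i,\mathrm{c}}^2$. Adding these gives the last inequality of the chain. Combining it with the lower bound gives $\|\bm{A}-\bm{B}\|_\F^2\le 2\Gamma\min_{\bm{C}}\|\bm{A}-\bm{C}\|_\F^2$, which is the first claim.

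\emph{Main obstacle.} The step that needs care is justifying the block-row restriction of the projector. Because $\bm{U}$ is block diagonal, row block $I_{i,L}$ of $\bm{P}_U\bm{A}$ equals $\bm{U}_i\bm{U}_i^\T\bm{A}(I_{i,L},:)$. This in turn requires that the diagonal blocks of $\bm{U}$ align with the index sets $I_{i,L}$, each of size $2k$, which follows directly from the definitions. Everything else is routine.
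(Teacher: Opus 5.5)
Your proof is correct and is essentially the paper's own argument: the main text attributes the result to \cite{amsel2025quasi}, but the self-contained proof of \Cref{theorem:blockblrabsolute} in the appendix uses the same two steps. These are splitting $\|\bm{A}-\bm{C}\|_\F^2$ over block rows and block columns with Eckart--Young for the lower bound, and the Pythagorean split into $\|\bm{A}-\bm{U}\bm{U}^\T\bm{A}\|_\F^2+\|\bm{U}\bm{U}^\T\bm{A}(\bm{I}-\bm{V}\bm{V}^\T)\|_\F^2$ for the upper bound, with the $2\Gamma$ factor following by chaining. The only minor difference is that you bound an arbitrary $\bm{C}\in\bsep(L,k)$ instead of fixing a minimizer $\bm{B}^\star$, so you do not need the minimum to be attained.
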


\subsection{Bounding the optimum at different levels}\label{section:optbound}
In this section, we establish a key intermediate result. Let $\bm{U}$ and $\bm{V}$ are block-diagonal orthonormal bases as in \Cref{theorem:recursive} and define
\begin{equation*}
    \bm{U}^\T \bm{A} \bm{V} =: \bm{X} = \begin{bmatrix} \bm{X}_{1,1} & \bm{X}_{1,2} \\ \bm{X}_{2,1} & \bm{X}_{2,2} \end{bmatrix}
    , \quad \bm{X}_{s,t} \in \mathbb{R}^{2^{L-1}k \times 2^{L-1}k}.
\end{equation*}
We show that the optimal butterfly approximation error of $\bm{A}$ is always larger than the sum of the optimal butterfly approximation errors of $\bm{X}_{1,1},\bm{X}_{1,2},\bm{X}_{2,1},$ and $\bm{X}_{2,2}$. This result will be used to argue that if $\bm{U} \bm{X} \bm{V}^\T$ is a near-optimal $\bsep(L,k)$ approximation to $\bm{A}$, then by recursively computing near-optimal $\bsep(L-2,k)$ approximations of $\bm{X}_{1,1},\bm{X}_{1,2},\bm{X}_{2,1},$ and $\bm{X}_{2,2}$, then the error of the overall approximation is at most a $O(L)$ factor times the  optimum, yielding our main result.
\begin{theorem}\label{theorem:contraction}
    Let $\bm{A} \in \mathbb{R}^{2^{L+1} k \times 2^{L+1}k}$. Consider any matrices
     \begin{align*}
                \bm{U} &= \blockdiag(\bm{U}_1,\ldots,\bm{U}_{2^L}), \quad \bm{U}_i \in \mathbb{R}^{2k \times k} \text{ has orthonormal columns},\\
            \bm{V} &= \blockdiag(\bm{V}_1,\ldots,\bm{V}_{2^L}), \quad \bm{V}_i \in \mathbb{R}^{2k \times k} \text{ has orthonormal columns}.
            \end{align*}
Define $\bm{X} = \bm{U}^\T \bm{A} \bm{V}$ and partition
 \begin{equation*}
                \bm{X} = \begin{bmatrix} \bm{X}_{1,1} & \bm{X}_{1,2} \\ 
                \bm{X}_{2,1} & \bm{X}_{2,2} \end{bmatrix}, \quad \bm{X}_{s,t} \in \mathbb{R}^{2^{L-1}k \times 2^{L-1}k}.
\end{equation*}
Then,
\begin{equation*}
    \sum\limits_{s,t=1}^2 \min\limits_{\bm{C} \in \butterfly(L-2,k)}\|\bm{X}_{s,t} - \bm{C}\|_\F^2 \leq \min\limits_{\bm{C}\in \butterfly(L,k)}\|\bm{A} - \bm{C}\|_\F^2. 
\end{equation*}
\end{theorem}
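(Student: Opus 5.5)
Let $\bm{C}^\star \in \butterfly(L,k)$ be an optimal butterfly approximation of $\bm{A}$. The plan is to produce, for each block $\bm{X}_{s,t}$, a competitor in $\butterfly(L-2,k)$ built from $\bm{C}^\star$, and then compare errors. The natural candidate is $\bm{Y} := \bm{U}^\T \bm{C}^\star \bm{V}$, partitioned as $\bm{Y} = [\bm{Y}_{s,t}]_{s,t=1}^2$ conformally with $\bm{X}$.

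\textbf{Step 1: each $\bm{Y}_{s,t}$ lies in $\butterfly(L-2,k)$.} The argument follows the one sketched in \Cref{fig:recursive_def}, but only one direction is needed. Fix $s,t$ and a complementary pair $(I'_{i,\ell}, I'_{j,L-2-\ell})$ for the level-$(L-2)$ tree on $2^{L-1}k$ indices, with $0\le\ell\le L-2$. Because $\bm{U}$ and $\bm{V}$ are block diagonal with $2k\times k$ blocks, the row set $I'_{i,\ell}$ of $\bm{Y}_{s,t}$ corresponds to a contiguous group of $k$-blocks of $\bm{X}$. Since $\bm{Y}_{s,t}$ sits in half $s$, this group is the image under $\bm{U}^\T$ of the rows $I_{i',\ell+1}$ of $\bm{C}^\star$, with $i' = (s-1)2^{\ell}+i$. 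The same holds on the column side: the columns map to $I_{j',L-1-\ell}$ of $\bm{C}^\star$. Then
\[
\bm{Y}_{s,t}(I'_{i,\ell},I'_{j,L-2-\ell}) = \widehat{\bm{U}}^\T\, \bm{C}^\star(I_{i',\ell+1}, I_{j',L-1-\ell})\, \widehat{\bm{V}},
\]
where $\widehat{\bm{U}},\widehat{\bm{V}}$ are the corresponding diagonal sub-blocks of $\bm{U},\bm{V}$. The levels satisfy $(\ell+1)+(L-1-\ell)=L$, so $\bm{C}^\star(I_{i',\ell+1}, I_{j',L-1-\ell})$ is a complementary block of $\bm{C}^\star$ and has rank at most $k$. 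Hence the block of $\bm{Y}_{s,t}$ has rank at most $k$ as well. The main obstacle is the index bookkeeping in this step: verifying that the index sets of $\bm{X}_{s,t}$ pull back exactly to $I_{i',\ell+1}$ and $I_{j',L-1-\ell}$. The cleanest check is that $I'_{i,\ell}$ has $2^{L-1-\ell}k$ indices, i.e.\ $2^{L-1-\ell}$ $k$-blocks, which come from $2^{L-1-\ell}$ $2k$-blocks, i.e.\ $2^{L-\ell}k$ rows. This is exactly $|I_{\cdot,\ell+1}|$, and alignment follows from contiguity together with the offset from $s$.

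\textbf{Step 2: error comparison.} Write $\bm{P}_U = \bm{U}\bm{U}^\T$ and $\bm{P}_V = \bm{V}\bm{V}^\T$; these are orthogonal projectors because $\bm{U}$ and $\bm{V}$ have orthonormal columns. Then
\[
\sum_{s,t}\|\bm{X}_{s,t}-\bm{Y}_{s,t}\|_\F^2 = \|\bm{U}^\T(\bm{A}-\bm{C}^\star)\bm{V}\|_\F^2 = \|\bm{P}_U(\bm{A}-\bm{C}^\star)\bm{P}_V\|_\F^2 \le \|\bm{A}-\bm{C}^\star\|_\F^2,
\]
since orthogonal projections are contractive in the Frobenius norm. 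Finally, bound each $\min_{\bm{C}\in\butterfly(L-2,k)}\|\bm{X}_{s,t}-\bm{C}\|_\F^2$ by $\|\bm{X}_{s,t}-\bm{Y}_{s,t}\|_\F^2$ and sum over $s,t$ to obtain the claim.

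No earlier results beyond the definitions are needed. In particular, \Cref{theorem:recursive} is not invoked, only the rank-preservation argument it is based on.
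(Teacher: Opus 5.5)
Your proof is correct. It uses the same competitor as the paper, $\bm{U}^\T\bm{C}^\star\bm{V}$, and the same projection argument for the error comparison. The difference is in how you show that each block $\bm{Y}_{s,t}$ lies in $\butterfly(L-2,k)$.

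The paper first factors the optimal approximation as $\bm{B}^\star=\bm{U}^\star\bm{X}^\star(\bm{V}^\star)^\T$ via \Cref{theorem:recursive}. It then writes the $(s,t)$ block of $\bm{U}^\T\bm{B}^\star\bm{V}$ as $(\bm{U}_{[s]}^\T\bm{U}^\star_{[s]})\bm{X}^\star_{s,t}(\bm{V}_{[t]}^\T\bm{V}^\star_{[t]})^\T$, notes that the outer factors are block diagonal with $2k\times 2k$ blocks, and concludes with \Cref{lemma:preservation}.

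You instead verify the complementary low-rank property of $\bm{Y}_{s,t}$ directly from the definition. Because each column of $\bm{U}$ and $\bm{V}$ is supported on a single $2k$-block, the complementary block of $\bm{Y}_{s,t}$ at levels $(\ell,L-2-\ell)$ is a two-sided compression of $\bm{C}^\star(I_{i',\ell+1},I_{j',L-1-\ell})$, with $i'=(s-1)2^\ell+i$. Your index check here is right. This is essentially the bookkeeping of \Cref{lemma:indices,lemma:direction2}, but done for arbitrary block-diagonal $\bm{U},\bm{V}$. It therefore needs neither the recursive factorization of $\bm{C}^\star$ nor the identity $\bm{C}^\star=\bm{U}\bm{U}^\T\bm{C}^\star\bm{V}\bm{V}^\T$, which makes it self-contained and slightly more general.

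The paper's route is shorter on the page because it reuses \Cref{theorem:recursive}, whose appendix proof already contains this bookkeeping. It also isolates the fact that block-diagonal transformations preserve butterfly structure as a standalone lemma.

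One small point applies to both arguments: you assume a minimizer $\bm{C}^\star$ exists. From your definition-based viewpoint this is immediate, since $\butterfly(L,k)$ is a finite intersection of closed sets $\{\bm{B}:\rank(\bm{B}(I,J))\le k\}$. Alternatively, you can run the argument with a near-minimizer.
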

To prove \Cref{theorem:contraction}, we first show that block-diagonal transformations of butterfly matrices are also butterfly matrices. 
\begin{lemma}\label[lemma]{lemma:preservation}
    If $\bm{B} \in \butterfly(L,k)$ and 
    \begin{align*}
        \bm{D} &= \blockdiag(\bm{D}_1,\ldots,\bm{D}_{2^L}), \quad \bm{D}_i \in \mathbb{R}^{2k \times 2k}\\
        \bm{S} &= \blockdiag(\bm{S}_1,\ldots,\bm{S}_{2^L}), \quad \bm{S}_i \in \mathbb{R}^{2k \times 2k}.
    \end{align*}
    Then $\bm{C} = \bm{D} \bm{B} \bm{S} \in \butterfly(L,k)$.
\end{lemma}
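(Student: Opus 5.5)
We verify the complementary low-rank property of \Cref{def:butterfly} for $\bm{C} = \bm{D}\bm{B}\bm{S}$ directly, block by block. Fix $\ell \in \{0,\ldots,L\}$, $i \in \{1,\ldots,2^\ell\}$ and $j \in \{1,\ldots,2^{L-\ell}\}$. The key observation is that every index set $I_{i,\ell}$ from \Cref{def:index_sets} is a union of leaf sets $I_{m,L}$, since $I_{i,\ell} = \bigcup_{m=(i-1)2^{L-\ell}+1}^{i2^{L-\ell}} I_{m,L}$. The diagonal blocks of $\bm{D}$ and $\bm{S}$ are of size $2k\times 2k$, so they are indexed exactly by these leaf sets, i.e.\ $\bm{D}_m$ acts on rows $I_{m,L}$ and $\bm{S}_m$ on columns $I_{m,L}$.

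From this, $\bm{D}$ maps rows in $I_{i,\ell}$ only to rows in $I_{i,\ell}$: $\bm{D}(I_{i,\ell},:)$ vanishes outside the columns $I_{i,\ell}$. Hence
\[
\bm{D}(I_{i,\ell},:) = \bm{D}(I_{i,\ell},I_{i,\ell})\,\bm{P}_{I_{i,\ell}},
\]
where $\bm{P}_{I}$ denotes the row-selection operator onto $I$, and $\bm{D}(I_{i,\ell},I_{i,\ell}) = \blockdiag(\bm{D}_m)$ over the leaves $m$ contained in $I_{i,\ell}$. Likewise, $\bm{S}(:,I_{j,L-\ell})$ is supported on the rows $I_{j,L-\ell}$, with nonzero part $\bm{S}(I_{j,L-\ell},I_{j,L-\ell})$.

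Combining the two factorizations gives
\[
\bm{C}(I_{i,\ell},I_{j,L-\ell}) = \bm{D}(I_{i,\ell},I_{i,\ell})\,\bm{B}(I_{i,\ell},I_{j,L-\ell})\,\bm{S}(I_{j,L-\ell},I_{j,L-\ell}).
\]
Since rank is submultiplicative, $\rank(\bm{C}(I_{i,\ell},I_{j,L-\ell})) \le \rank(\bm{B}(I_{i,\ell},I_{j,L-\ell})) \le k$, where the last inequality holds because $\bm{B} \in \butterfly(L,k)$. This holds for all $\ell,i,j$, so $\bm{C} \in \butterfly(L,k)$.

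The only step requiring care is the bookkeeping showing that the leaf-level block-diagonal structure is compatible with every level of the tree, i.e.\ that each $I_{i,\ell}$ is exactly a union of consecutive leaf sets $I_{m,L}$ of size $2k$. This follows from the nesting $I_{i,\ell-1} = I_{2i-1,\ell}\cup I_{2i,\ell}$ by induction on $L-\ell$. Note that no invertibility or orthogonality of the $\bm{D}_i$ or $\bm{S}_i$ is needed.
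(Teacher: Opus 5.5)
Your proof is correct and is essentially the paper's argument: the paper also writes $\bm{C}(I_{i,\ell},I_{j,L-\ell}) = \bm{D}(I_{i,\ell},I_{i,\ell})\,\bm{B}(I_{i,\ell},I_{j,L-\ell})\,\bm{S}(I_{j,L-\ell},I_{j,L-\ell})$ from the block-diagonal structure and concludes by submultiplicativity of rank. Your extra bookkeeping, that each $I_{i,\ell}$ is a union of consecutive leaf sets $I_{m,L}$, makes explicit the step the paper leaves implicit.
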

\begin{proof}
    Pick any complementary pair $(I_{i,\ell}, I_{j,L-\ell})$ as per \Cref{def:butterfly}.
    Then, $\bm{C}(I_{i,\ell},I_{j,L-\ell}) = \bm{D}(I_{i,\ell},:) \bm{B} \bm{S}(:,I_{j,L-\ell}) = \bm{D}(I_{i,\ell},I_{i,\ell}) \bm{B}(I_{i,\ell}, I_{j,L-\ell}) \bm{S}(I_{j,L-\ell},I_{j,L-\ell})$. Hence, $\rank(\bm{C}(I_{i,\ell},I_{j,L-\ell})) \leq \rank(\bm{B}(I_{i,\ell},I_{j,L-\ell}))\leq k$.
\end{proof}
With \Cref{lemma:preservation} at hand, we can proceed with the proof of \Cref{theorem:contraction}.
\begin{proof}[Proof of \Cref{theorem:contraction}]
    The proof is similar to \cite[Lemma 3.4]{amsel2025quasi}, which establishes a similar property for HSS matrices. Let $\bm{B}^{\star} \in \argmin\limits_{\bm{C} \in \butterfly(L,k)}\|\bm{A} - \bm{C}\|_\F^2$ have decomposition $\bm{B}^{\star} = \bm{U}^{\star} \bm{X}^{\star} (\bm{V}^{\star})^\T$  as per \Cref{theorem:recursive}.\footnote{Using identical arguments as in \cite[Theorem D.1]{amsel2025quasi} one can show that $\butterfly(L,k)$ is a closed set. Hence, an optimal approximation always exists.} Partition
    \begin{equation*}
        \bm{B}^{\star} = \begin{bmatrix} \bm{B}_{1,1} & \bm{B}_{1,2} \\ 
                \bm{B}_{2,1} & \bm{B}_{2,2} \end{bmatrix} = \begin{bmatrix} \bm{U}_{[1]}^{\star} \bm{X}^{\star}_{1,1}(\bm{V}_{[1]}^{\star})^\T & \bm{U}_{[1]}^{\star} \bm{X}^{\star}_{1,2}(\bm{V}_{[2]}^{\star})^\T \\ 
                \bm{U}_{[2]}^{\star} \bm{X}^{\star}_{2,1}(\bm{V}_{[1]}^{\star})^\T & \bm{U}_{[2]}^{\star} \bm{X}^{\star}_{2,2}(\bm{V}_{[2]}^{\star})^\T \end{bmatrix}
    \end{equation*}
    Hence, if we partition $\bm{U} = \blockdiag(\bm{U}_{[1]},\bm{U}_{[2]})$ and $\bm{V} = \blockdiag(\bm{V}_{[1]},\bm{V}_{[2]})$ into diagonal blocks of equal size, then 
    \begin{equation*}
        \bm{U}^\T \bm{B}^{\star} \bm{V} = \begin{bmatrix} \bm{U}_{[1]}^{\T}\bm{U}_{[1]}^{\star} \bm{X}^{\star}_{1,1}(\bm{V}_{[1]}^{\T}\bm{V}_{[1]}^{\star})^\T & \bm{U}_{[1]}^{\T}\bm{U}_{[1]}^{\star} \bm{X}^{\star}_{1,2}(\bm{V}_{[2]}^{\T}\bm{V}_{[2]}^{\star})^\T \\ 
                \bm{U}_{[2]}^{\T}\bm{U}_{[2]}^{\star} \bm{X}^{\star}_{2,1}(\bm{V}_{[1]}^{\T}\bm{V}_{[1]}^{\star})^\T & \bm{U}_{[2]}^{\T}\bm{U}_{[2]}^{\star} \bm{X}^{\star}_{2,2}(\bm{V}_{[2]}^{\T}\bm{V}_{[2]}^{\star})^\T \end{bmatrix} :=\begin{bmatrix} \bm{C}_{1,1} & \bm{C}_{1,2} \\ 
                \bm{C}_{2,1} & \bm{C}_{2,2} \end{bmatrix}.
    \end{equation*}
    Note that $\bm{U}_{[s]}^{\T}\bm{U}_{[s]}^{\star}$ and $\bm{V}_{[s]}^{\T}\bm{V}_{[s]}^{\star}$ can be viewed as a block-diagonal matrix with $2^{L-2}$ diagonal blocks of size $2k \times 2k$. Furthermore, by \Cref{theorem:recursive} we have $\bm{X}_{s,t}^{\star} \in \butterfly(L-2,k)$, so by \Cref{lemma:preservation} $\bm{C}_{s,t}\in \butterfly(L-2,k)$. Hence,
    \begin{align*}
        \sum\limits_{s,t=1}^2 \min\limits_{\bm{C} \in \butterfly(L-2,k)}\|\bm{X}_{s,t} - \bm{C}\|_\F^2 &\leq  \sum\limits_{s,t=1}^2 \|\bm{X}_{s,t} - \bm{C}_{s,t}\|_\F^2=\|\bm{X} - \bm{U}^\T\bm{B}^{\star}\bm{V}\|_\F^2\leq \|\bm{A} -\bm{B}^{\star}\|_\F^2= \min\limits_{\bm{C} \in \butterfly(L,k)}\|\bm{A} - \bm{C}\|_\F^2,
    \end{align*}
    as required. 
\end{proof}
\subsection{A greedy meta-algorithm for butterfly approximation}\label{section:metaalg}
We now show how to combine the results from \Cref{section:bsep,section:optbound} to arrive at an algorithm to compute butterfly approximations. The algorithm is described in \Cref{alg:butterfly_greedy_meta}, which is a generic greedy algorithm that proceeds by recursively computing near-optimal $\bsep$ approximations using \Cref{alg:bsep_greedy_meta}. We show that \Cref{alg:butterfly_greedy_meta} returns a quasi-optimal butterfly approximation provided that \Cref{alg:bsep_greedy_meta} is implemented so that the assumptions in \Cref{theorem:bsep} hold.

\begin{algorithm}
\caption{Greedy $\butterfly$ approximation (meta-algorithm)}
\label{alg:butterfly_greedy_meta}
\textbf{input:} Matrix $\bm{A} \in \mathbb{R}^{2^{L+1}k \times 2^{L+1}k}$, rank parameter $k$ and even level $L$.\\
\textbf{output:} An approximation $\bm{B} \in \butterfly(L,k)$ with factorization as in \Cref{theorem:recursive}.
\begin{algorithmic}[1]
\State Apply \Cref{alg:bsep_greedy_meta} to to obtain $\bm{U} \widehat{\bm{X}} \bm{V}^\T \in \bsep(L,k)$ where $\widehat{\bm{X}} = \bm{U}^\T \bm{A} \bm{V}$. 
\If{$L = 0$}
\State $\bm{X} = \widehat{\bm{X}}$.
\Else
\State Partition 
\begin{equation*}
    \widehat{\bm{X}} = \begin{bmatrix} \widehat{\bm{X}}_{1,1} & \widehat{\bm{X}}_{1,2} \\ 
                \widehat{\bm{X}}_{2,1} & \widehat{\bm{X}}_{2,2} \end{bmatrix}, \quad \widehat{\bm{X}}_{s,t} \in \mathbb{R}^{2^{L-1}k \times 2^{L-1}k}.
\end{equation*}
\For{$s,t = 1,2$}
\State Recursively apply \Cref{alg:butterfly_greedy_meta} to $\widehat{\bm{X}}_{s,t}$ to obtain $\bm{X}_{s,t} \in \butterfly(L-2,k)$.
\EndFor
\State Define
\begin{equation*}
    \bm{X} := \begin{bmatrix} \bm{X}_{1,1} & \bm{X}_{1,2} \\ 
                \bm{X}_{2,1} & \bm{X}_{2,2} \end{bmatrix}.
\end{equation*}
\EndIf
\State \textbf{return} $\bm{B} = \bm{U} \bm{X} \bm{V}^\T$.
\end{algorithmic}
\end{algorithm}
\begin{theorem}[Quasi-optimal butterfly approximation]\label{theorem:meta_butterfly}
    Suppose that \Cref{alg:bsep_greedy_meta} is implemented so the conditions in \Cref{theorem:bsep} hold. Then the output of \Cref{alg:butterfly_greedy_meta} satisfies
    \begin{equation*}
        \|\bm{A} - \bm{B}\|_\F^2 \leq \Gamma(L+2)\min\limits_{\bm{C} \in \butterfly(L,k)}\|\bm{A} - \bm{C}\|_\F^2.
    \end{equation*}
\end{theorem}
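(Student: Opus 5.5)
Induction on even $L$, writing $\OPT_L(\bm{A}) := \min_{\bm{C}\in\butterfly(L,k)}\|\bm{A}-\bm{C}\|_\F^2$. The claimed bound is $\|\bm{A}-\bm{B}\|_\F^2 \le \Gamma(L+2)\,\OPT_L(\bm{A})$.

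Base case $L=0$. The algorithm returns $\bm{B}=\bm{U}\widehat{\bm{X}}\bm{V}^\T$ from \Cref{alg:bsep_greedy_meta}. Since $\bsep(0,k)=\butterfly(0,k)$ (both are the matrices of rank at most $k$), \Cref{theorem:bsep} gives $\|\bm{A}-\bm{B}\|_\F^2\le 2\Gamma\,\OPT_0(\bm{A})$. This is exactly $\Gamma(L+2)$ at $L=0$.

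Inductive step, error split. For $L>0$, set $\bm{P}=\bm{U}\bm{U}^\T$ and $\bm{Q}=\bm{V}\bm{V}^\T$. Because $\bm{B}=\bm{U}\bm{X}\bm{V}^\T$ lies in the range of $\bm{P}$ and co-range of $\bm{Q}$, the Pythagorean theorem gives
\begin{equation*}
\|\bm{A}-\bm{B}\|_\F^2 = \|\bm{A}-\bm{P}\bm{A}\bm{Q}\|_\F^2 + \|\bm{P}\bm{A}\bm{Q}-\bm{U}\bm{X}\bm{V}^\T\|_\F^2 .
\end{equation*}
Here $\bm{A}-\bm{P}\bm{A}\bm{Q} = (\bm{I}-\bm{P})\bm{A}+\bm{P}\bm{A}(\bm{I}-\bm{Q})$, a sum of two pieces that are orthogonal to $\bm{U}\bm{Y}\bm{V}^\T$ for every $\bm{Y}$. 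Since $\bm{U},\bm{V}$ have orthonormal columns, the second term equals $\|\widehat{\bm{X}}-\bm{X}\|_\F^2=\sum_{s,t}\|\widehat{\bm{X}}_{s,t}-\bm{X}_{s,t}\|_\F^2$.

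Bounding the first term. This is the $\bsep$ error, and it needs the sharper form of \Cref{theorem:bsep}:
\begin{equation*}
\|\bm{A}-\bm{P}\bm{A}\bm{Q}\|_\F^2 \le \Gamma\sum_i(\varepsilon_{i,\mathrm r}^2+\varepsilon_{i,\mathrm c}^2) \le 2\Gamma\min_{\bm{C}\in\bsep(L,k)}\|\bm{A}-\bm{C}\|_\F^2 \le 2\Gamma\,\OPT_L(\bm{A}),
\end{equation*}
where the last step uses $\butterfly(L,k)\subseteq\bsep(L,k)$. A direct check also works: $\|(\bm{I}-\bm{P})\bm{A}\|_\F^2\le\Gamma\sum_i\varepsilon_{i,\mathrm r}^2$, and $\|\bm{P}\bm{A}(\bm{I}-\bm{Q})\|_\F^2\le\|\bm{A}(\bm{I}-\bm{Q})\|_\F^2\le\Gamma\sum_i\varepsilon_{i,\mathrm c}^2$. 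Each row sum $\sum_i \varepsilon_{i,\mathrm r}^2$ and column sum $\sum_i\varepsilon_{i,\mathrm c}^2$ is individually at most $\OPT_L(\bm{A})$. The reason is that the row blocks $I_{i,L}$ paired with column index set $I_{1,0}$ (and symmetrically for columns) are complementary blocks of any butterfly matrix, so each has rank at most $k$; summing the per-block Eckart--Young errors gives the claim. So this term is at most $2\Gamma\,\OPT_L(\bm{A})$.

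Bounding the second term. Each $\bm{X}_{s,t}$ is the output of the recursion on $\widehat{\bm{X}}_{s,t}$ at level $L-2$. The induction hypothesis gives $\|\widehat{\bm{X}}_{s,t}-\bm{X}_{s,t}\|_\F^2\le \Gamma L\,\OPT_{L-2}(\widehat{\bm{X}}_{s,t})$. Summing over $s,t$ and applying \Cref{theorem:contraction}, with the $\bm{U},\bm{V}$ produced by the algorithm, gives $\sum_{s,t}\OPT_{L-2}(\widehat{\bm{X}}_{s,t})\le \OPT_L(\bm{A})$. Hence the second term is at most $\Gamma L\,\OPT_L(\bm{A})$.

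Conclusion and main obstacle. Adding the two bounds gives $\Gamma(L+2)\,\OPT_L(\bm{A})$, which closes the induction. The step that needs the most care is the Pythagorean splitting: one must verify that the cross term vanishes, i.e.\ that $\bm{A}-\bm{P}\bm{A}\bm{Q}$ is Frobenius-orthogonal to every matrix of the form $\bm{U}\bm{Y}\bm{V}^\T$. This follows from $\langle(\bm{I}-\bm{P})\bm{A},\bm{U}\bm{Y}\bm{V}^\T\rangle=0$ and $\langle \bm{P}\bm{A}(\bm{I}-\bm{Q}),\bm{U}\bm{Y}\bm{V}^\T\rangle=0$. The remaining ingredients are direct invocations of \Cref{theorem:bsep} and \Cref{theorem:contraction}.
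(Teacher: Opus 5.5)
Your proof is correct and follows the paper's argument essentially step for step. The paper also inducts on $L/2$ and uses the Pythagorean split into the $\bsep$ error plus $\sum_{s,t}\|\widehat{\bm{X}}_{s,t}-\bm{X}_{s,t}\|_\F^2$. It bounds the first term with \Cref{theorem:bsep} and $\butterfly(L,k)\subseteq\bsep(L,k)$, and the second with the inductive hypothesis and \Cref{theorem:contraction}. Your explicit check that the cross term vanishes, and your direct bound on $\sum_i\varepsilon_{i,\mathrm r}^2$ and $\sum_i\varepsilon_{i,\mathrm c}^2$, are sound details the paper leaves implicit. The ``sharper form'' is not actually needed: the first inequality of \Cref{theorem:bsep} applied to $\bm{U}\widehat{\bm{X}}\bm{V}^\T$ already gives the $2\Gamma$ bound.
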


\begin{proof}
    The proof is by induction on $h$, where $L = 2h$. The proof for $L = 0$ is immediate due to \Cref{theorem:bsep}, since $\butterfly(0,k) = \bsep(0,k)$. Suppose the statement is true for $L-2$. Using the notation of \Cref{alg:butterfly_greedy_meta}, we obtain
    \begin{align*}
        \|\bm{A} - \bm{B}\|_\F^2 &= \|\bm{A} - \bm{U} \widehat{\bm{X}}\bm{V}^\T\|_\F^2 + \sum\limits_{s,t=1}^2\|\widehat{\bm{X}}_{s,t} - \bm{X}_{s,t}\|_\F^2 \tag{Pythagoras' theorem}\\
        &\leq2\Gamma \min\limits_{\bm{C}\in \bsep(L,k)} \|\bm{A} - \bm{C}\|_\F^2 + \sum\limits_{s,t=1}^2\|\widehat{\bm{X}}_{s,t} - \bm{X}_{s,t}\|_\F^2 \tag{Assumption on \Cref{alg:bsep_greedy_meta}}\\
        &\leq 2\Gamma \min\limits_{\bm{C}\in \butterfly(L,k)} \|\bm{A} - \bm{C}\|_\F^2 + \sum\limits_{s,t=1}^2\|\widehat{\bm{X}}_{s,t} - \bm{X}_{s,t}\|_\F^2 \tag{$\butterfly(L,k) \subseteq \bsep(L,k)$}\\
        &\leq 2\Gamma  \min\limits_{\bm{C}\in \butterfly(L,k)} \|\bm{A} - \bm{C}\|_\F^2 + \Gamma L\sum\limits_{s,t=1}^2\min\limits_{\bm{C} \in \butterfly(L-2,k)}\|\widehat{\bm{X}}_{s,t} - \bm{C}\|_\F^2 \tag{Inductive hypothesis}\\
        &\leq 2\Gamma  \min\limits_{\bm{C}\in \butterfly(L,k)} \|\bm{A} - \bm{C}\|_\F^2 + \Gamma L \min\limits_{\bm{C}\in \butterfly(L,k)} \|\bm{A} - \bm{C}\|_\F^2 \tag{\Cref{theorem:contraction}}\\
        &= \Gamma(L+2) \min\limits_{\bm{C}\in \butterfly(L,k)} \|\bm{A} - \bm{C}\|_\F^2,
        \end{align*}
        as required.
\end{proof}
\Cref{alg:butterfly_greedy_meta} can be implemented by approximating each complementary low-rank block using the rank-$k$ truncated SVD. 
For this implementation of \Cref{alg:bsep_greedy_meta}, we have the following result. 

\begin{corollary}
    Let $\bm{A} \in \mathbb{R}^{2^{L+1}k \times 2^{L+1}k}$ and let $N = 2^{L+1}k$ denote the matrix size. Suppose that \Cref{alg:bsep_greedy_meta} is implemented with the truncated SVD. Then, the algorithm requires $O(N^2 k)$ operations and returns an approximation $\bm{B}\in\butterfly(L,k)$ satisfying
    \begin{equation*}
        \|\bm{A} - \bm{B}\|_\F^2 \leq (L+2) \min\limits_{\bm{B}\in \butterfly(L,k)} \|\bm{A} - \bm{C}\|_\F^2.
    \end{equation*}
\end{corollary}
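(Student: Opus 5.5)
The corollary follows from \Cref{theorem:meta_butterfly} once we check that the truncated SVD meets the hypotheses of \Cref{theorem:bsep} with $\Gamma = 1$, and then count operations.

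\emph{Accuracy.} In \Cref{alg:bsep_greedy_meta}, take $\bm{U}_i$ to be the top $k$ left singular vectors of $\bm{A}(I_{i,L},:)$ and $\bm{V}_i$ the top $k$ right singular vectors of $\bm{A}(:,I_{i,L})$. By Eckart--Young,
\[
\bm{U}_i\bm{U}_i^\T\bm{A}(I_{i,L},:)=\llbracket \bm{A}(I_{i,L},:)\rrbracket_k ,
\]
so both inequalities in \Cref{theorem:bsep} hold with equality for $\Gamma=1$. The same choice is used at every recursion level, where it is applied to the blocks $\widehat{\bm{X}}_{s,t}$. The inductive proof of \Cref{theorem:meta_butterfly} only requires the hypothesis of \Cref{theorem:bsep} at each call, so it applies verbatim and gives the factor $(L+2)$.

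\emph{Cost.} Let $T(L)$ be the cost for an input of size $N=2^{L+1}k$. One call of \Cref{alg:bsep_greedy_meta} costs:
\begin{itemize}
\item $2^L$ SVDs of $2k\times N$ matrices. Each costs $O(k^2N)$, for a total of $O(2^Lk^2N)=O(N^2k)$.
\item Forming $\widehat{\bm{X}}=\bm{U}^\T\bm{A}\bm{V}$ with block-diagonal factors. Each $2k\times 2k$ block of $\bm{A}$ is multiplied by $k\times 2k$ and $2k\times k$ factors at cost $O(k^3)$. There are $(N/2k)^2$ blocks, so this also costs $O(N^2k)$.
\end{itemize}
The recursion then makes four calls on matrices of size $N/2$, giving
\[
T(N)=4T(N/2)+O(N^2k).
\]
This recurrence naively solves to $O(N^2k\log N)$, not the claimed bound, so the counting needs care.

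\emph{Main obstacle.} The key observation is that $L$ decreases by $2$ while $N$ halves. Since $N=2^{L+1}k$ and $L-2$ gives $N/4$, the subproblems are really of size $2^{L-1}k=N/4$. Indeed $\widehat{\bm{X}}\in\mathbb{R}^{N/2\times N/2}$, so its blocks $\widehat{\bm{X}}_{s,t}$ are $N/4\times N/4$. The correct recurrence is therefore
\[
T(N)=4T(N/4)+O(N^2k),
\]
whose per-level costs form a geometric series, so $T(N)=O(N^2k)$.

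The base case $L=0$ is a single $2k\times 2k$ SVD at cost $O(k^3)$. The remaining dimension check, that $\widehat{\bm{X}}_{s,t}$ has size $2^{L-1}k=2^{(L-2)+1}k$ as required for the $\butterfly(L-2,k)$ recursion, holds by construction.
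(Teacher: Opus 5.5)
Your proposal is correct and follows the paper's proof. The truncated SVD gives $\Gamma = 1$ in \Cref{theorem:bsep}, so accuracy follows from \Cref{theorem:meta_butterfly}. Your corrected recurrence $T(N) = 4T(N/4) + O(N^2k)$ is exactly the paper's $C(L) = O(4^{L+1}k^3) + 4C(L-2)$ (since $4^{L+1}k^3 = N^2k$), which sums geometrically to $O(N^2k)$; your accounting of the $2^L$ SVDs and the block-diagonal product $\bm{U}^\T\bm{A}\bm{V}$ just spells out the per-call $O(4^{L+1}k^3)$ cost that the paper states without detail.
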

\begin{proof}
    If \cref{alg:bsep_greedy_meta} is implemented using the truncated SVD, then $\Gamma = 1$. Thus, the accuracy statement is an immediate corollary of \Cref{theorem:meta_butterfly}.
    We now proceed with computing the complexity. Let $C(L)$ denote the complexity of applying \Cref{alg:butterfly_greedy_meta} to a matrix of size $2^{L+1}k \times 2^{L+1}k$. Then $C(0) = O(k^3)$. Since \Cref{alg:bsep_greedy_meta} requires $O(4^{L+1}k^3)$, the complexity $C(L)$ satisfies the following recurrence relation
    \begin{equation*}
        C(L) = O(4^{L+1}k^3) + 4C(L-2) = O(4^{L+1}k^3) = O(N^2 k),
    \end{equation*}
    where we used $N = 2^{L+1}k$.
\end{proof}

\begin{remark}
One may also use other methods for low-rank approximation.
Possibilities include column pivoted QR and the interpolative decomposition, which can produce a low-rank approximation of a matrix without observing all of the entries \cite{Pang2020}.
However, the theoretical guarantees for such methods are complicated and depend on various properties of the matrix in question.
\end{remark}

The proof of \Cref{theorem:meta_butterfly} can easily be modified to obtain absolute error bounds, which are useful in practical implementations. 
In particular, we have the following absolute error bounds. We note that \Cref{theorem:absolute} ii) is similar to \cite[Theorem 5.3]{LiuXingGuo:2021}.

\begin{theorem}\label{theorem:absolute}
    Consider a matrix $\bm{A} \in \mathbb{R}^{2^{L+1} k \times 2^{L+1} k}$ and let $N = 2^{L+1}k$. Let $I_{i,\ell}$ be as in \Cref{def:butterfly}. For each $\ell = 0,1,\ldots,L$ and $i,j$ suppose 
     \begin{align*}
          \|\bm{A}(I_{i,\ell},I_{j,L-\ell})-\llbracket \bm{A}(I_{i,\ell},I_{j,L-\ell})\rrbracket_k\|_\F \leq \varepsilon(I_{i,\ell},I_{j,L-\ell}).
     \end{align*}
     Then, under the assumptions of \Cref{theorem:bsep}, if $\bm{B}$ is the approximation from \Cref{alg:butterfly_greedy_meta}
    we have
    \begin{equation}\label{eq:absolute_main}
        \|\bm{A} - \bm{B}\|_\F^2 \leq \Gamma \sum\limits_{\ell = 0}^{L/2} \sum\limits_{i=1}^{2^{\ell}} \sum\limits_{j=1}^{2^{L-\ell}} \left[\varepsilon(I_{i,\ell},I_{j,L-\ell})^2 + \varepsilon(I_{j,L-\ell},I_{i,\ell})^2\right].
    \end{equation}
    Hence, 
    \begin{enumerate}[label=\roman*)]
        \item if $\varepsilon(I_{i,\ell},I_{j,L-\ell}) \leq \epsilon$, then
        \begin{equation}\label{eq:absolute1}
            \|\bm{A} - \bm{B}\|_\F \leq \sqrt{\Gamma \frac{N}{2k}(L+2)} \cdot \epsilon.
        \end{equation}
        \item\label{theorem:absolute:ii} 
        if $\varepsilon(I_{i,\ell},I_{j,L-\ell}) \leq \varepsilon \|\bm{A}(I_{i,\ell},I_{j,L-\ell})\|_\F$, then
         \begin{equation}\label{eq:absolute2}
            \|\bm{A} - \bm{B}\|_\F \leq   \sqrt{\Gamma (L+2)} \cdot \varepsilon \|\bm{A}\|_\F.
        \end{equation}
    \end{enumerate}
\end{theorem}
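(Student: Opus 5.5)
We argue by induction on $h$, where $L = 2h$, following the structure of the proof of \Cref{theorem:meta_butterfly}. The difference is that at each level we use the \emph{upper} bound of \Cref{theorem:bsep} (the one in terms of $\varepsilon_{i,\mathrm{r}}$, $\varepsilon_{i,\mathrm{c}}$) instead of the comparison with the optimum. By the Pythagorean identity already used there,
\[
\|\bm{A}-\bm{B}\|_\F^2 = \|\bm{A}-\bm{U}\widehat{\bm{X}}\bm{V}^\T\|_\F^2 + \sum_{s,t=1}^2 \|\widehat{\bm{X}}_{s,t}-\bm{X}_{s,t}\|_\F^2 .
\]
\Cref{theorem:bsep} bounds the first term by $\Gamma\sum_i[\varepsilon_{i,\mathrm{r}}^2+\varepsilon_{i,\mathrm{c}}^2]$. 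Here $\varepsilon_{i,\mathrm{r}}$ is the optimal rank-$k$ error of $\bm{A}(I_{i,L},I_{1,0})$, a complementary block at level $\ell=L$, and $\varepsilon_{i,\mathrm{c}}$ is that of $\bm{A}(I_{1,0},I_{i,L})$, the $\ell=0$ block. These are exactly the $\ell=0$ terms of \eqref{eq:absolute_main}: for $\ell=0$, $i=1$, $j$ ranges over $2^L$ values, and both orientations $(I_{1,0},I_{j,L})$ and $(I_{j,L},I_{1,0})$ appear.

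For the recursive terms, apply the inductive hypothesis to each $\widehat{\bm{X}}_{s,t}$, which has level $L-2$. This bounds $\|\widehat{\bm{X}}_{s,t}-\bm{X}_{s,t}\|_\F^2$ by $\Gamma$ times the analogous sum over complementary blocks of $\widehat{\bm{X}}_{s,t}$ at levels $\ell'=0,\dots,(L-2)/2$. The key step is to identify each such block with a complementary block of $\bm{A}$ at level $\ell=\ell'+1$, and to bound its optimal rank-$k$ error by the corresponding error for $\bm{A}$. As in \Cref{fig:recursive_def}, a complementary block of $\widehat{\bm{X}}_{s,t}$ equals $\widehat{\bm{U}}^\T\widehat{\bm{A}}\widehat{\bm{V}}$, where $\widehat{\bm{A}}=\bm{A}(I_{i,\ell},I_{j,L-\ell})$ and $\widehat{\bm{U}},\widehat{\bm{V}}$ are the corresponding diagonal pieces of $\bm{U},\bm{V}$, which have orthonormal columns. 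Since
\[
\mathcal{E}_k(\widehat{\bm{U}}^\T\widehat{\bm{A}}\widehat{\bm{V}}) \le \|\widehat{\bm{U}}^\T(\widehat{\bm{A}}-\llbracket\widehat{\bm{A}}\rrbracket_k)\widehat{\bm{V}}\|_\F \le \mathcal{E}_k(\widehat{\bm{A}}) \le \varepsilon(I_{i,\ell},I_{j,L-\ell}),
\]
the hypothesis transfers down one level. The bookkeeping to check is the index correspondence. Block $(s,t)$ of $\widehat{\bm{X}}$ has rows coming from $I_{s,1}$ and columns from $I_{t,1}$ of $\bm{A}$, and each $2k$ leaf of $\widehat{\bm{X}}_{s,t}$ (size $2k$, two $k$-pieces) corresponds to a $4k$ node, i.e.\ a level-$(L-1)$ node, of $\bm{A}$. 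So a pair $(I'_{i',\ell'},I'_{j',L-2-\ell'})$ of $\widehat{\bm{X}}_{s,t}$ corresponds to $(I_{i,\ell'+1},I_{j,L-1-\ell'})$ of $\bm{A}$, with $i$ running over the descendants of $I_{s,1}$ and $j$ over those of $I_{t,1}$. As $(s,t,i',j')$ vary, this gives a bijection onto all level-$(\ell'+1)$ complementary pairs of $\bm{A}$. Hence $\ell'=0,\dots,(L-2)/2$ maps to $\ell=1,\dots,L/2$, with both orientations preserved. Summing yields \eqref{eq:absolute_main}. The base case $L=0$ is \Cref{theorem:bsep} directly.

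Consequences i) and ii) are then counting arguments. For i), the number of pairs at level $\ell$ is $2^\ell\cdot2^{L-\ell}=2^L=N/(2k)$. There are $L/2+1$ values of $\ell$ and two orientations, giving $2^L(L+2)$ terms, each at most $\epsilon^2$, which gives \eqref{eq:absolute1}. For ii), for fixed $\ell$ the blocks $\bm{A}(I_{i,\ell},I_{j,L-\ell})$ tile $\bm{A}$, so $\sum_{i,j}\|\bm{A}(I_{i,\ell},I_{j,L-\ell})\|_\F^2=\|\bm{A}\|_\F^2$, and likewise for the transposed orientation. Summing over the $L/2+1$ levels and both orientations gives $(L+2)\varepsilon^2\|\bm{A}\|_\F^2$, which is \eqref{eq:absolute2}. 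I expect the main obstacle to be the index bookkeeping in the recursive step: verifying that the recursion covers each complementary pair at levels $1,\dots,L/2$ exactly once in each orientation, and that the middle level $\ell=L/2$ is correctly handled as a self-transposed family.
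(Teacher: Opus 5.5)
Your proposal is correct and follows essentially the same route as the paper: the Pythagorean split, the upper bound of \Cref{theorem:bsep} for the outer level (which supplies exactly the $\ell=0$ terms), transfer of the rank-$k$ error bounds to complementary blocks of $\widehat{\bm{X}}_{s,t}$ via the orthonormal compressions of \Cref{lemma:direction2}, induction, and the two counting arguments. Your level bookkeeping is the right one: the recursive terms cover $\ell=1,\ldots,L/2$, with the middle level appearing once per orientation. The paper's displayed range $\ell=1,\ldots,L/2-1$ for the recursive sum is an off-by-one slip.
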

While the proof is similar in spirit to \Cref{theorem:meta_butterfly}, it relies on notation introduced in \Cref{section:recursive}. We therefore defer the proof to \Cref{section:absolute}. 
We conclude this section with a useful remark. 
\begin{remark}
    Suppose that every matrix in \Cref{alg:butterfly_greedy_meta} that is sent to \Cref{alg:bsep_greedy_meta} is approximated with relative error $\varepsilon$, then the proof of \Cref{theorem:absolute} implies that final output has relative error at most $\varepsilon\sqrt{L+2}$. This may be useful for rank adaptive algorithms.
\end{remark}

%% file: matvec_algorithm.tex
\section{A memory-efficient matrix-vector product query algorithm}\label{section:matvec}

We now shift focus to the setting when $\bm{A}$ can only be accessed through matvecs  $\bm{x} \mapsto \bm{A} \bm{x}$ and $\bm{y} \mapsto \bm{A}^\T \bm{y}$.
We design and analyze an algorithm that returns a butterfly matrix whose error is, with high probability, within a factor $O(N^{1/4})$ of the optimal error using $\widetilde{O}(N)$ units of memory, where $N$ is the matrix size. 
Here, $\widetilde{O}(\cdot)$ suppresses polynomial factors in the number of levels $L = O(\log(N))$; these factors are made explicit later in this section.

To highlight the main ideas, we first present a simple implementation in \Cref{section:memoryeff_alg} and establish a quasi-optimality guarantee in \cref{section:memoryeff_proof}. 
In this form, the algorithm is conceptually simpler but requires all $\widetilde{O}(\sqrt{N})$ matvecs to be computed and stored up front, leading to a memory cost of $\widetilde{O}(N^{3/2})$. 
In \Cref{section:memoryeff}, we show how to implement the algorithm in a memory-efficient manner by computing matvecs when needed rather than upfront, thereby reducing the memory requirements to $\widetilde{O}(N)$, while preserving the quasi-optimality guarantees. 
The algorithm we present can be viewed as an instance of the black-box hybrid butterfly factorization approach developed in \cite{Guo2017,LiuXingGuo:2021}.

\begin{remark}\label[remark]{remark:reduction}
    Before proceeding, we note that given \emph{any} algorithm to compute quasi-optimal butterfly approximations, one can always obtain an algorithm using $O(\sqrt{N})$ matvecs with $\bm{A}$ and $\bm{A}^\T$ that returns a quasi-optimal butterfly approximation. Consider the set $\butterflymid(L,k)$ of matrices satisfying $\rank(\bm{B}(I_{i,L/2},I_{j,L/2})) \leq k$. I.e., the set of matrices satisfying the complementary low-rank property at level $\ell = L/2$ (see $\ell = 2$ in \Cref{fig:CLR}). Using $O(\sqrt{N})$ matvecs we can always find a near-optimal approximation $\widehat{\bm{B}}$ from this class. Since $\butterfly(L,k)$ is a subset of $\butterflymid(L,k)$, if $\bm{B}$ is a quasi-optimal butterfly approximation to $\widehat{\bm{B}}$, then by a simple triangle inequality argument $\bm{B}$ is a quasi-optimal butterfly approximation to $\bm{A}$. 
    However, storing any matrix in $\butterflymid(L,k)$ in factored form requires $O(N^{3/2})$ units of memory, and such an algorithm can therefore never be memory-efficient. 
    See \Cref{sec:matvec_strongeraccuracy} for further details.
\end{remark}

\subsection{Basic idea and simple implementation}\label{section:memoryeff_alg}
To introduce the algorithm, we first consider the idealized setting in which $\bm{A} \in \butterfly(L,k)$. In other words, $\bm{A}$ is exactly butterfly. 
By \Cref{theorem:recursive}, $\bm{A}$ admits a factorization of the form
\begin{equation*}
    \bm{A} = \bm{U} \bm{X} \bm{V}^\T,
\end{equation*}
where
\begin{align*}
    \bm{U} &= \blockdiag(\bm{U}_1,\ldots,\bm{U}_{2^L}), \quad \bm{U}_i \in \mathbb{R}^{2k \times k} \text{ has orthonormal columns},\\
    \bm{V} &= \blockdiag(\bm{V}_1,\ldots,\bm{V}_{2^L}), \quad \bm{V}_i \in \mathbb{R}^{2k \times k} \text{ has orthonormal columns}.
\end{align*}
Consequently, if $I_{i,L}$ is a leaf-node as in \Cref{def:index_sets}, then
\begin{equation*}
    \rank(\bm{A}(I_{i,L},:)), \rank(\bm{A}(:,I_{i,L}))\leq k,\quad i = 1,\ldots,2^L.
\end{equation*}
Hence, we can recover $\range(\bm{A}(I_{i,L},:))$ and $\range(\bm{A}(:,I_{i,L}))$ using randomized methods. 
If $\bm{\Omega}^{(L)} \sim \gaussian(2^{L+1}k,\alpha k)$, where $\alpha \geq 1$, then
\begin{equation*}
    \bm{Y}^{(L)} = \bm{A} \bm{\Omega}^{(L)} \Rightarrow \range(\bm{Y}^{(L)}(I_{i,L},:)) = \range(\bm{A}(I_{i,L},:)), \quad i = 1,\ldots,2^L,
\end{equation*}
almost surely. Hence, if $\bm{U}_i$ are the top $k$ left singular vectors of $\bm{Y}^{(L)}(I_{i,L},:)$, and $\bm{U} = \blockdiag(\bm{U}_1,\ldots,\bm{U}_{2^L})$ we have $\bm{A} = \bm{U}\bm{U}^\T \bm{A}$.
By an entirely similar argument, we can compute an orthonormal basis $\bm{V} = \blockdiag(\bm{V}_1,\ldots,\bm{V}_{2^L})$ from $\bm{Z}^{(L)} = \bm{A}^\T \bm{\Psi}^{(L)}$, where $\bm{\Psi}^{(L)} \sim \gaussian(2^{L+1}k,\alpha k)$, so that
\begin{equation*}
    \bm{A} = \bm{U}\bm{U}^\T\bm{A} \bm{V}\bm{V}^\T = \bm{U} \bm{X} \bm{V}^\T, \quad \bm{X} := \bm{U}^\T\bm{A} \bm{V}.
\end{equation*}
Since $\bm{A} \in \butterfly(L,k)$, by \Cref{theorem:recursive} $\bm{X}$ can be partitioned into a $2 \times 2$ block-matrix with blocks of equal size
\begin{equation*}
    \bm{X} = \begin{bmatrix} \bm{X}_{1,1} & \bm{X}_{1,2} \\ 
    \bm{X}_{2,1} & \bm{X}_{2,2} \end{bmatrix},
\end{equation*}
where for $s,t = 1,2$, $\bm{X}_{s,t} \in \butterfly(L-2,k)$. 
We can, once again, recover the column and row bases of the blocks $\bm{X}_{s,t}$ for $s,t = 1,2$ from randomized sketches. 
Draw two independent Gaussian random matrices $\bm{\Omega}_{[1]}^{(L-2)}, \bm{\Omega}_{[2]}^{(L-2)} \sim \gaussian(2^{L}k,\alpha k)$. 
Then, if $\bm{V} = \blockdiag(\bm{V}_{[1]},\bm{V}_{[2]})$ we have
\begin{equation*}
    \bm{U}^\T \bm{A} \begin{bmatrix} \bm{\Omega}_{[1]} & \\ & \bm{\Omega}_{[2]} \end{bmatrix} = \begin{bmatrix} \bm{X}_{1,1} \left(\bm{V}_{[1]}^\T \bm{\Omega}_{[1]}\right) & \bm{X}_{1,2} \left(\bm{V}_{[2]}^\T \bm{\Omega}_{[2]}\right) \\ 
    \bm{X}_{2,1} \left(\bm{V}_{[1]}^\T \bm{\Omega}_{[1]}\right) & \bm{X}_{2,2} \left(\bm{V}_{[2]}^\T \bm{\Omega}_{[2]}\right) \end{bmatrix}. 
\end{equation*}
By the unitary invariance of standard Gaussian matrices, conditioned on $\bm{V}$, $\bm{V}_{[t]}^\T\bm{\Omega}_{[t]}$ is a standard Gaussian matrix. 
Hence, $\bm{X}_{s,t} \left(\bm{V}_{[t]}^\T \bm{\Omega}_{[t]}\right)$ is a sketch of $\bm{X}_{s,t} \in \butterfly(L-2,k)$ and we can recover its columns space from this sketch using the method described above. 
By an entirely analogous argument, we can recover the row-space of $\bm{X}_{s,t}$:  if $\bm{\Psi}_{[1]}^{(L-2)}, \bm{\Psi}_{[2]}^{(L-2)} \sim \gaussian(2^{L}k,\alpha k)$ are two independent standard Gaussian random matrices, then
\begin{equation*}
    \bm{V}^\T \bm{A}^\T \begin{bmatrix} \bm{\Psi}_{[1]}^{(L-2)} & \\ & \bm{\Psi}_{[2]}^{(L-2)} \end{bmatrix} = \begin{bmatrix} \bm{X}_{1,1}^\T \left(\bm{U}_{[1]}^\T \bm{\Psi}_{[1]}^{(L-2)}\right) & \bm{X}_{2,1}^\T \left(\bm{U}_{[2]}^\T \bm{\Psi}_{[2]}^{(L-2)}\right) \\ 
    \bm{X}_{1,2}^\T \left(\bm{U}_{[1]}^\T \bm{\Psi}_{[1]}^{(L-2)}\right) & \bm{X}_{2,2}^\T \left(\bm{U}_{[2]}^\T \bm{\Psi}_{[2]}^{(L-2)}\right) \end{bmatrix}, 
\end{equation*}
where $\bm{X}_{s,t}^\T\left(\bm{U}_{[s]}^\T \bm{\Psi}_{[s]}^{(L-2)}\right)$ is a sketch of $\bm{X}_{s,t}^\T$.

The derivation above illustrates the recursive structure of the recovery procedure. 
Repeating the same argument recursively yields a recovery procedure for every level of the butterfly structure. 
For each level $\ell = L,L-2,\ldots,2,0$ define the block-diagonal sketch matrices
\begin{equation}
\begin{aligned}
\bm{\Omega}^{(\ell)} &= \blockdiag\left(\bm{\Omega}_1^{(\ell)},\ldots,\bm{\Omega}^{(\ell)}_{2^{\frac{L-\ell}{2}}}\right) = \blockdiag(\bm{\Omega}_{[1]}^{(\ell)}, \bm{\Omega}_{[2]}^{(\ell)}),\\
    \bm{\Psi}^{(\ell)} &= \blockdiag\left(\bm{\Psi}_1^{(\ell)},\ldots,\bm{\Psi}^{(\ell)}_{2^{\frac{L-\ell}{2}}}\right) = \blockdiag(\bm{\Psi}_{[1]}^{(\ell)}, \bm{\Psi}_{[2]}^{(\ell)})
\end{aligned}
\label{eqn:sketch_def}
\end{equation}
where $\bm{\Omega}_i^{(\ell)}, \bm{\Psi}_i^{(\ell)} \stackrel{i.i.d.}{\sim} \gaussian(2^{\frac{L+\ell}{2}+1}k, \alpha k),$ and
\begin{equation}\label{eq:matvecs}
    \bm{Y}^{(\ell)} = \bm{A} \bm{\Omega}^{(\ell)}, \quad \bm{Z}^{(\ell)} = \bm{A}^\T \bm{\Psi}^{(\ell)}.
\end{equation}

\begin{figure}
    \centering
    \input{imgs/sketch_ex}
    \caption{Visualization of the nonzero entries of the sketch matrices $\bm{\Omega}^{(\ell)}$ defined in \cref{eqn:sketch_def} for $L=4$.
    On the right, we illustrate the partitioning of $\bm{\Omega}^{(\ell)}$into $\bm{\Omega}_{[1]}^{(\ell)}$ and $\bm{\Omega}_{[2]}^{(\ell)}$ for $\ell = 4$. 
    The same block-diagonal structure is also used for the other $\ell$.}
    \label{fig:sketch_ex}
\end{figure}
The block-diagonal structure of $\bm{\Omega}^{(\ell)}$ and $\bm{\Psi}^{(\ell)}$ are shown in \Cref{fig:sketch_ex}. The sketches $\bm{Y}^{(\ell)}$ and $\bm{Z}^{(\ell)}$ are used to recover the bases at level $\ell$ of the recursion for $\ell = L,L-2,\ldots,2$. 
At the final level $\ell = 0$, recovering a matrix in $\butterfly(0,k)$ is the same as recovering a $2k \times 2k$ matrix of rank $k$, which can be done using the generalized Nyström approximation \cite{Nakatsukasa:2020}. 
In particular, if $\bm{A} \in \butterfly(0,k)$, and $\bm{U} \in \mathbb{R}^{2k \times k}$ are the top $k$ left singular vectors of $\bm{A}\bm{\Omega}$, where $\bm{\Omega} \sim \gaussian(2k,\alpha k)$, then if $\bm{\Psi} \sim \gaussian(2k,\alpha k)$ we have
\begin{equation*}
    \bm{A} = \bm{U} (\bm{\Psi}^\T \bm{U})^{\dagger} \bm{\Psi}^\T\bm{A}
\end{equation*}
almost surely. 
With these ideas at hand, we arrive at \Cref{alg:butterfly_greedy_matvec}, which requires $O(\alpha \sqrt{Nk})$ matvecs with $\bm{A}$ and $\bm{A}^\T$ and $O(\alpha (Nk)^{3/2})$ additional operations, where $N = 2^{L+1}k$ is the matrix-size. Note that \Cref{alg:butterfly_greedy_matvec} is a special instantiation of \Cref{alg:butterfly_greedy_meta}: lines~\ref{line:6}-\ref{line:10} correspond to the call of \Cref{alg:bsep_greedy_meta} in \Cref{alg:butterfly_greedy_meta}; lines~\ref{line:11}-\ref{line:16} correspond to processing the sketches of $\bm{A}$ to obtain (approximate) sketches with the blocks of $\bm{X} = \bm{U}^\T \bm{A} \bm{V}$; finally, lines~\ref{line:17}-\ref{line:19} correspond to the recursion in \Cref{alg:butterfly_greedy_meta}.
Before proceeding, we make three remarks:
\begin{enumerate}
    \item[i)] The sketches $\bm{Y}^{(\ell)},\bm{Z}^{(\ell)}$ are computed and stored upfront and used as inputs in \Cref{alg:butterfly_greedy_matvec}. 
    Consequently, the memory complexity is therefore $O(\alpha N \sqrt{Nk})$. In \Cref{section:memoryeff} we show how the matvecs can be computed only when needed and discarded immediately after. This reduces the memory complexity to $O(Nk\log(N/k) + \alpha Nk)$.
    \item[ii)] The preceding derivation assumed that $\bm{A}$ was exactly butterfly. In practice, however, $\bm{A}$ need not be exactly butterfly, but we have $\bm{A} = \bm{B} + \bm{E}$, where $\bm{B} 
    \in \butterfly(L,k)$ is an unknown approximation. 
    This introduces error in the algorithm, and in \Cref{section:memoryeff_proof} we bound the error to prove that the approximation error of the output of the algorithm is, with high probability, at most $O(N^{1/4})$ away from the optimal approximation error.
    \item[iii)] The parameter $\alpha$ controls the amount of oversampling and therefore the matvec and memory cost of the algorithm. For general $\alpha$, the algorithm uses $O(\alpha \sqrt{Nk})$ matvecs and $O(Nk\log(N/k) + \alpha Nk)$ working memory. For the practically common choice $\alpha = O(1)$ yields $O(\sqrt{Nk})$ matvecs and $O(Nk\log(N/k))$ memory, which is optimal. The high-probability quasi-optimality guarantee established in the following section require $\alpha$ to grow polylogarithmically with $N/k$, resulting in polylogarithmic overheads in the matvec and memory complexities. 
\end{enumerate}

\begin{algorithm}
\caption{Greedy $\butterfly$ approximation (memory-inefficient matvec algorithm)}
\label{alg:butterfly_greedy_matvec}
\textbf{input:} Block-diagonal sketch matrix $\bm{\Psi}^{(0)}$  and sketches $\{\bm{Y}^{(\ell)}, \bm{Z}^{(\ell)} \}_{\ell = 0,2\ldots,L}$. Rank parameter $k$. Level $L$. \\
\textbf{output:} An approximation $\bm{B} \in \butterfly(L,k)$ with factorization as in \Cref{theorem:recursive}.
\begin{algorithmic}[1]
\If{$L = 0$}\label{line:1}
\State Compute the top $k$ left singular vectors $\bm{U}$ for $\bm{Y}^{(0)}$. 
\State $\bm{M} = (\bm{\Psi}^{(0)\T} \bm{U})^{\dagger} \bm{Z}^{(0)\T}$.
\State Compute the (rank reduced) SVD of $\bm{M}$ to obtain $\bm{W} \bm{S} \bm{V}^\T$.
\State Let $\bm{X} =\bm{W} \bm{S}$. \label{line:5}
\Else\label{line:6}
\State Partition 
\begin{align*}
    \bm{Y}^{(L)} = \begin{bmatrix} \bm{Y}^{(L,1)}\\\vdots\\\bm{Y}^{(L,2^{L})} \end{bmatrix}, \quad \bm{Y}^{(L,i)} \in \mathbb{R}^{2k \times \alpha k},\quad
     \bm{Z}^{(L)} = \begin{bmatrix} \bm{Z}^{(L,1)}\\\vdots\\\bm{Z}^{(L,2^{L})} \end{bmatrix}, \quad \bm{Z}^{(L,i)} \in \mathbb{R}^{2k \times \alpha k}.
\end{align*}
\For{$i = 1,\ldots,2^L$}
 \State Compute the top $k$ left singular vectors $\bm{U}_i$ and $\bm{V}_i$ for $\bm{Y}^{(L,i)}$ and $\bm{Z}^{(L,i)}$, respectively.
\EndFor\label{line:10}
\State Let $\bm{U} = \blockdiag(\bm{U}_1,\ldots,\bm{U}_{2^L})$ and $\bm{V} = \blockdiag(\bm{V}_1,\ldots,\bm{V}_{2^L})$.
\State Compute $\widehat{\bm{\Psi}}^{(0)} = \bm{U}^\T \bm{\Psi}^{(0)}$ and partition in half:
\begin{equation*}
    \widehat{\bm{\Psi}}^{(0)} = \begin{bmatrix} \widehat{\bm{\Psi}}_{[1]}^{(0)} & \\ & \widehat{\bm{\Psi}}_{[2]}^{(0)} \end{bmatrix}.
\end{equation*}\label{line:11}
\For{$\ell = L-2,\ldots,2,0$}\label{line:12}
\State Compute $\widehat{\bm{Y}}^{(\ell)} = \bm{U}^\T \bm{Y}^{(\ell)}, \widehat{\bm{Z}}^{(\ell)} = \bm{V}^\T \bm{Z}^{(\ell)}$.
\State Partition 
\begin{equation*}
    \widehat{\bm{Y}}^{(\ell)} = \begin{bmatrix} \widehat{\bm{Y}}^{(\ell)}_{1,1} & \widehat{\bm{Y}}^{(\ell)}_{1,2} \\
    \widehat{\bm{Y}}^{(\ell)}_{2,1} & \widehat{\bm{Y}}^{(\ell)}_{2,2} \end{bmatrix}, \quad \widehat{\bm{Z}}^{(\ell)} = \begin{bmatrix} \widehat{\bm{Z}}^{(\ell)}_{1,1} & \widehat{\bm{Z}}^{(\ell)}_{2,1} \\
    \widehat{\bm{Z}}^{(\ell)}_{1,2} & \widehat{\bm{Z}}^{(\ell)}_{2,2} \end{bmatrix},
\end{equation*}
where the partition is exactly half in both axis.
\EndFor\label{line:16}
\For{$s,t = 1,2$}\label{line:17}
\State Recursively apply \Cref{alg:butterfly_greedy_matvec} with inputs
\begin{align*}
    \text{Sketch matrix:}& \quad \widehat{\bm{\Psi}}_{[s]}^{(0)},\\
    \text{Sketches:}& \quad \{\widehat{\bm{Y}}_{s,t}^{(\ell)},\widehat{\bm{Z}}_{s,t}^{(\ell)}\}_{\ell = 0,2,\ldots,L-2},\\
    \text{Rank parameter:}& \quad k,\\
    \text{Level:} & \quad L-2,
\end{align*}
to obtain $\bm{X}_{s,t} \in \butterfly(L-2,k)$.
\EndFor\label{line:19}
\State Define
\begin{equation*}
    \bm{X} := \begin{bmatrix} \bm{X}_{1,1} & \bm{X}_{1,2} \\ 
                \bm{X}_{2,1} & \bm{X}_{2,2} \end{bmatrix}.
\end{equation*}
\EndIf
\State \text{return} $\bm{B} = \bm{U} \bm{X} \bm{V}^\T$.
\end{algorithmic}
\end{algorithm}

\subsection{Analysis}
\label{section:memoryeff_proof}
This section is dedicated to an analysis of the approximation error of the a butterfly approximation returned by \Cref{alg:butterfly_greedy_matvec}. 
Recall that the derivation of the algorithm in the previous section assumed that the matrix of interest $\bm{A}$ is exactly butterfly. 
In general, this is not true. 
As a consequence, the algorithm will introduce errors that will corrupt each of the block-diagonal orthonormal bases. 
We now provide a guarantee on the approximation quality for each complementary low-rank block under a noisy matvec model, where the noisy term has a particular structure due to the recursive nature of~\cref{alg:butterfly_greedy_matvec}.
\begin{lemma}\label[lemma]{thm:perturbation1}
     Let $\bm{B} \in \mathbb{R}^{m \times n}$, $\bm{E} \in \mathbb{R}^{m \times q}$ be fixed and suppose $\bm{\Omega} \sim \gaussian(n,\alpha k)$ and $\widetilde{\bm{\Omega}} \sim \gaussian(q,\alpha k)$ are independent.  Let $\bm{U}$ be the top $k$ left singular vectors of $\bm{B} \bm{\Omega} + \bm{E} \widetilde{\bm{\Omega}}$. Let $\delta \in (0,1/2]$ and $\varepsilon \in (0,1]$. Then, if $\alpha = O\left( \frac{\log(1/\delta)}{\varepsilon^2}\right)$, we have with probability at least $1-\delta$
    \begin{equation*}
        \|\bm{B} - \bm{U}\bm{U}^\T \bm{B}\|_\F^2 \leq (1+\varepsilon)  [\|\bm{B} - \llbracket \bm{B} \rrbracket_k\|_\F^2 + \|\bm{E}\|_\F^2].
    \end{equation*}
\end{lemma}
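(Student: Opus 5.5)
The key observation is that the noisy sketch is an exact Gaussian sketch of an augmented matrix. Set $\bm{M} := [\bm{B}\ \ \bm{E}] \in \mathbb{R}^{m \times (n+q)}$ and $\bm{G} := [\bm{\Omega}^\T\ \ \widetilde{\bm{\Omega}}^\T]^\T$. Since $\bm{\Omega}$ and $\widetilde{\bm{\Omega}}$ are independent standard Gaussian, $\bm{G} \sim \gaussian(n+q,\alpha k)$, and
\[
\bm{B}\bm{\Omega} + \bm{E}\widetilde{\bm{\Omega}} = \bm{M}\bm{G}.
\]
So $\bm{U}$ is the top-$k$ left singular subspace of a plain Gaussian sketch of $\bm{M}$, and the noise disappears from the analysis. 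The plan has three steps.

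\emph{Step 1 (reduce to $\bm{M}$).} Since $\bm{B}$ is a column-submatrix of $\bm{M}$,
\[
\|\bm{B} - \bm{U}\bm{U}^\T\bm{B}\|_\F^2 \leq \|\bm{B} - \bm{U}\bm{U}^\T\bm{B}\|_\F^2 + \|\bm{E} - \bm{U}\bm{U}^\T\bm{E}\|_\F^2 = \|\bm{M} - \bm{U}\bm{U}^\T\bm{M}\|_\F^2 .
\]

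\emph{Step 2 (sketch-then-truncate guarantee).} I will show that, with probability at least $1-\delta$,
\[
\|\bm{M} - \bm{U}\bm{U}^\T\bm{M}\|_\F^2 \leq (1+\varepsilon)\,\mathcal{E}_k(\bm{M})^2 .
\]
I would obtain this from the fact that a right Gaussian sketch with $O((k+\log(1/\delta))/\varepsilon^2)$ columns is a \emph{projection-cost-preserving sketch} in the sense of Cohen, Elder, Musco, Musco and Persu. That is, for every rank-$k$ orthogonal projector $\bm{P}$,
\[
\|\bm{M}\bm{G} - \bm{P}\bm{M}\bm{G}\|_\F^2 \approx_{1\pm\varepsilon} \|\bm{M} - \bm{P}\bm{M}\|_\F^2 + c,
\]
where $c$ is a constant independent of $\bm{P}$ and the sketch is suitably normalized. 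Since $\bm{U}\bm{U}^\T$ is the optimal rank-$k$ projector for $\bm{M}\bm{G}$, it is then $(1+O(\varepsilon))$-optimal for $\bm{M}$. The hypothesis $\alpha k = O(k\log(1/\delta)/\varepsilon^2)$ dominates $O((k+\log(1/\delta))/\varepsilon^2)$ because $\delta \le 1/2$. Rescaling $\varepsilon$ by a constant absorbs the $O(\cdot)$.

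\emph{Step 3 (bound the optimal error of $\bm{M}$).} The competitor $[\llbracket\bm{B}\rrbracket_k\ \ \bm{0}]$ has rank at most $k$, so
\[
\mathcal{E}_k(\bm{M})^2 \leq \|\bm{B} - \llbracket\bm{B}\rrbracket_k\|_\F^2 + \|\bm{E}\|_\F^2 .
\]
Chaining Steps 1–3 gives the claimed bound.

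The main obstacle is Step 2: the $(1+\varepsilon)$-optimality of the truncated sketch, with a failure-probability dependence of only $\log(1/\delta)$. The standard randomized SVD bounds control projection onto the whole range of $\bm{M}\bm{G}$, not onto its top-$k$ subspace, so they do not apply directly. If I cannot cite the projection-cost-preserving result cleanly, I would prove Step 2 directly.

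For the direct proof, split $\bm{M} = \bm{M}_k + \bm{M}_{\setminus k}$ with $\bm{M}_k = \llbracket\bm{M}\rrbracket_k$. Then I need two facts about $\bm{G}$, normalized by $1/\sqrt{\alpha k}$:
\begin{enumerate}[label=(\roman*)]
\item a subspace embedding on the $k$-dimensional row space of $\bm{M}_k$, which holds with probability $1-\delta$ when $\alpha k \gtrsim (k+\log(1/\delta))/\varepsilon^2$;
\item approximate matrix multiplication for the cross term $\bm{M}_k\bm{G}\bm{G}^\T\bm{M}_{\setminus k}^\T$, together with norm preservation of $\|\bm{M}_{\setminus k}\bm{G}\|_\F^2$ up to a $\bm{P}$-independent shift, each via Hanson–Wright-type tail bounds.
\end{enumerate}
From (i) and (ii), compare the cost of $\bm{U}\bm{U}^\T$ against that of the projector onto $\range(\bm{M}_k)$, exactly as in the projection-cost-preserving proof.
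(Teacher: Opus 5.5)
Your proposal is correct and matches the paper's proof essentially step for step. The paper also rewrites the noisy sketch as a plain Gaussian sketch of $[\bm{B}\ \bm{E}]$, bounds $\|\bm{B}-\bm{U}\bm{U}^\T\bm{B}\|_\F^2$ by the corresponding error of the augmented matrix, and invokes the same projection-cost-preserving sketch result of Cohen, Elder, Musco, Musco and Persu, with the same remarks on $k+\log(1/\delta)\lesssim k\log(1/\delta)$ and rescaling $\varepsilon$. It then finishes with the competitor $[\llbracket\bm{B}\rrbracket_k\ \bm{0}]$, so your fallback direct proof of Step 2 is unnecessary.
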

\begin{proof}
    Note that
    \begin{equation*}
        \bm{B} \bm{\Omega} + \bm{E} \widetilde{\bm{\Omega}} = \begin{bmatrix} \bm{B} & \bm{E} \end{bmatrix} \begin{bmatrix} \bm{\Omega} \\ \widetilde{\bm{\Omega}} \end{bmatrix} = \bm{C} \bm{\Psi},
     \end{equation*}
     where $\bm{C} \in \mathbb{R}^{m \times (n+q)}$ and $\bm{\Psi} \sim \gaussian(n+q,\alpha k)$. Then, by \cite[Claim 1 and Corollary 7]{pcps}\footnote{Here we are using that $O(\frac{k+\log(1/\delta)}{\varepsilon^2}) \leq O(\frac{k\log(1/\delta)}{\varepsilon^2})$ and that if $\varepsilon' = \frac{1}{3} \varepsilon$, then $\frac{1+\varepsilon'}{1-\varepsilon'} \leq 1+\varepsilon$.} we have with probability at least $1-\delta$ 
     \begin{align*}
         \|\bm{B} - \bm{U}\bm{U}^\T \bm{B}\|_\F^2 \leq \|\bm{C} - \bm{U}\bm{U}^\T \bm{C}\|_\F^2 \leq (1+\varepsilon) \,\mathcal{E}_k(\bm{C})^2.
     \end{align*}
     The proof is concluded by noting
     \begin{equation*}
         \mathcal{E}_k(\bm{C})^2 \leq \left\|\bm{C}-\begin{bmatrix} \llbracket \bm{B}\rrbracket_k & \bm{0}\end{bmatrix} \right\|_\F^2 \leq \|\bm{B}  - \llbracket \bm{B}\rrbracket_k \|_\F^2 + \|\bm{E}\|_\F^2. 
     \end{equation*}
\end{proof}
With \Cref{thm:perturbation1} at hand, we are ready to prove an approximation guarantee with $\bsep$ matrices. 
Since \Cref{alg:butterfly_greedy_matvec} computes a $\butterfly$ approximation through recursive $\bsep$ approximation, this lemma will be essential for our final approximation guarantee. 
\begin{lemma}\label[lemma]{lemma:perturbation2}
    Let $\bm{A} \in \mathbb{R}^{2^{L+1} k \times 2^{L+1}k}$ and $\bm{E}, \bm{D}\in \mathbb{R}^{2^{L+1}k \times q}$ be fixed. 
    Draw independent $\bm{\Omega}, \bm{\Psi} \sim \gaussian(2^{L+1}k, \alpha k)$ and $\widetilde{\bm{\Omega}}, \widetilde{\bm{\Psi}} \sim \gaussian(q, \alpha k)$. Let $\bm{Y} = \bm{A} \bm{\Omega} + \bm{E}\widetilde{\bm{\Omega}}$ and $\bm{Z} = \bm{A}^\T\bm{\Psi} + \bm{D}^\T\widetilde{\bm{\Psi}}$. Partition $\bm{Y}$ and $\bm{Z}$ into blocks of size $2k \times \alpha k$:
    \begin{equation*}
        \bm{Y} = \begin{bmatrix}\bm{Y}_1 \\ \vdots \\ \bm{Y}_{2^L} \end{bmatrix}, \quad \bm{Z} = \begin{bmatrix}\bm{Z}_1 \\ \vdots \\ \bm{Z}_{2^L} \end{bmatrix}, \quad \bm{Y}_i, \bm{Z}_i \in \mathbb{R}^{2k \times  \alpha k}.
    \end{equation*}
    For $i = 1,\ldots,2^L$, let $\bm{U}_i$ and $\bm{V}_i$ be the top $k$ left singular vectors of $\bm{Y}_i$ and $\bm{Z}_i$, respectively, and define $\bm{U} = \blockdiag(\bm{U}_1,\ldots,\bm{U}_{2^L})$ and $\bm{V} = \blockdiag(\bm{V}_1,\ldots,\bm{V}_{2^L})$. 
    Let $\delta \in (0,1/2]$ and $\varepsilon \in (0,1]$. Then, if $\alpha  = O\left(\frac{L+\log(1/\delta)}{\varepsilon^2} \right)$, we have with probability at least $1-\delta$
    \begin{equation*}
        \|\bm{A} - \bm{U}\bm{U}^\T\bm{A} \bm{V}\bm{V}^\T\|_\F^2 \leq 2(1+\varepsilon) \min\limits_{\bm{C} \in \bsep(L,k)}\|\bm{A} - \bm{C}\|_\F^2 + (1+\varepsilon)\left[\|\bm{E}\|_\F^2 + \|\bm{D}\|_\F^2\right].
    \end{equation*}
\end{lemma}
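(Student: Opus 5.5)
The plan is to apply \Cref{thm:perturbation1} blockwise, take a union bound over all $2^{L+1}$ blocks, and then combine the resulting bounds using the lower bound in \Cref{theorem:bsep}.

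\textbf{Step 1 (per-block bounds).} For each $i$, the block of rows $I_{i,L}$ satisfies
\[
\bm{Y}_i = \bm{A}(I_{i,L},:)\bm{\Omega} + \bm{E}(I_{i,L},:)\widetilde{\bm{\Omega}}.
\]
This is exactly the setting of \Cref{thm:perturbation1} with $\bm{B} = \bm{A}(I_{i,L},:)$ and noise $\bm{E}(I_{i,L},:)$. The lemma only requires each block's sketch to be a Gaussian matrix independent of the noise sketch; it does not require independence across blocks. So we apply it with failure probability $\delta' = \delta/2^{L+1}$. This gives
\[
\|\bm{A}(I_{i,L},:) - \bm{U}_i\bm{U}_i^\T\bm{A}(I_{i,L},:)\|_\F^2 \le (1+\varepsilon)\bigl[\varepsilon_{i,\mathrm{r}}^2 + \|\bm{E}(I_{i,L},:)\|_\F^2\bigr].
\]
Applied to $\bm{Z}_i = \bm{A}(:,I_{i,L})^\T\bm{\Psi} + \bm{D}(I_{i,L},:)^\T\widetilde{\bm{\Psi}}$ (here I am reading the noise as $\bm{D}^\T\widetilde{\bm{\Psi}}$ with $\bm{D}^\T$ of size $2^{L+1}k\times q$, i.e.\ the intended row blocks), the same lemma gives
\[
\|\bm{A}(:,I_{i,L}) - \bm{A}(:,I_{i,L})\bm{V}_i\bm{V}_i^\T\|_\F^2 \le (1+\varepsilon)\bigl[\varepsilon_{i,\mathrm{c}}^2 + \|\bm{D}_i\|_\F^2\bigr].
\]
The required oversampling is
\[
\alpha = O\!\left(\frac{\log(2^{L+1}/\delta)}{\varepsilon^2}\right) = O\!\left(\frac{L+\log(1/\delta)}{\varepsilon^2}\right).
\]
A union bound over the $2^{L+1}$ events shows that all of these bounds hold simultaneously with probability at least $1-\delta$.

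\textbf{Step 2 (combine the projections).} Write $\bm{P} = \bm{U}\bm{U}^\T$ and $\bm{Q} = \bm{V}\bm{V}^\T$, both orthogonal projectors. By Pythagoras,
\[
\|\bm{A} - \bm{P}\bm{A}\bm{Q}\|_\F^2 = \|(\bm{I}-\bm{P})\bm{A}\|_\F^2 + \|\bm{P}\bm{A}(\bm{I}-\bm{Q})\|_\F^2 \le \|(\bm{I}-\bm{P})\bm{A}\|_\F^2 + \|\bm{A}(\bm{I}-\bm{Q})\|_\F^2 .
\]
Because $\bm{P}$ and $\bm{Q}$ are block diagonal, the first term equals the sum over $i$ of the row-block errors and the second equals the sum of the column-block errors. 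Summing the Step 1 bounds gives
\[
\|\bm{A} - \bm{P}\bm{A}\bm{Q}\|_\F^2 \le (1+\varepsilon)\sum_i\bigl[\varepsilon_{i,\mathrm{r}}^2+\varepsilon_{i,\mathrm{c}}^2\bigr] + (1+\varepsilon)\bigl[\|\bm{E}\|_\F^2+\|\bm{D}\|_\F^2\bigr].
\]

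\textbf{Step 3 (relate to the optimum).} The lower bound in \Cref{theorem:bsep} states
\[
\tfrac12\sum_i\bigl[\varepsilon_{i,\mathrm{r}}^2+\varepsilon_{i,\mathrm{c}}^2\bigr] \le \min_{\bm{C}\in\bsep(L,k)}\|\bm{A}-\bm{C}\|_\F^2 .
\]
This is elementary: any $\bm{C}\in\bsep$ has rank-$k$ leaf row and column panels, so each of the two families of block errors is bounded by $\|\bm{A}-\bm{C}\|_\F^2$. Substituting this into Step 2 gives the factor $2(1+\varepsilon)$ in front of the minimum, which is the claim.

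The main obstacle I expect is the bookkeeping in Step 1. One must check that each block sketch is a legitimate Gaussian sketch independent of its noise sketch, even though the noise sketch $\widetilde{\bm{\Omega}}$ is shared across all blocks. It is, since $\bm{\Omega}$ and $\widetilde{\bm{\Omega}}$ are independent of each other. One must also check that the union bound only costs the additive $L$ term inside $\alpha$.
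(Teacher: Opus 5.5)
Your proof is correct and follows the paper's argument essentially step for step: the Pythagorean reduction to $\|(\bm{I}-\bm{U}\bm{U}^\T)\bm{A}\|_\F^2+\|\bm{A}(\bm{I}-\bm{V}\bm{V}^\T)\|_\F^2$ split over leaf blocks, \Cref{thm:perturbation1} applied to each of the $2^{L+1}$ row and column blocks with failure probability $\delta/2^{L+1}$ plus a union bound (which is what puts the $L$ into $\alpha$), and the lower bound of \Cref{theorem:bsep} to produce the factor $2(1+\varepsilon)$. One cosmetic slip is harmless, since only $\sum_i\|\bm{D}(:,I_{i,L})\|_\F^2=\|\bm{D}\|_\F^2$ is used: under your reading $\bm{D}^\T\in\mathbb{R}^{2^{L+1}k\times q}$, the $i$th noise block of $\bm{Z}$ is $\bm{D}(:,I_{i,L})^\T\widetilde{\bm{\Psi}}$ (as in the paper), not $\bm{D}(I_{i,L},:)^\T\widetilde{\bm{\Psi}}$.
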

\begin{proof}
    Let $I_{i,L}$ be as in \cref{def:index_sets}. We have
    \begin{align*}
        \|\bm{A} - \bm{U}\bm{U}^\T\bm{A} \bm{V}\bm{V}^\T\|_\F^2 &= \|\bm{A} - \bm{U}\bm{U}^\T \bm{A}\|_\F^2 + \|\bm{U}\bm{U}^\T \bm{A} - \bm{U}\bm{U}^\T\bm{A} \bm{V}\bm{V}^\T\|_\F^2 \\
        & \leq \|\bm{A} - \bm{U}\bm{U}^\T \bm{A}\|_\F^2 + \|\bm{A} - \bm{A} \bm{V}\bm{V}^\T\|_\F^2 \\
        & = \sum\limits_{i=1}^{2^L}\left[\|\bm{A}(I_{i,L},:) - \bm{U}_i\bm{U}_i^\T \bm{A}(I_{i,L},:)\|_\F^2 + \|\bm{A}(:,I_{i,L}) - \bm{A}(:,I_{i,L}) \bm{V}_i\bm{V}_i^\T\|_\F^2\right].
    \end{align*}
    Since $\alpha = O\left(\frac{L+ \log(1/\delta)}{\varepsilon^2}\right) = O\left(\frac{L+1+ \log(1/\delta)}{\varepsilon^2}\right) = O\left(\frac{ \log(2^{L+1}/\delta)}{\varepsilon^2}\right)$, by \Cref{thm:perturbation1} we have for each $i=1,\ldots,2^L$ with probability at least $1-2\delta/2^{L+1} = 1-\delta/2^L$,
    \begin{align}
        \begin{split}\label{eq:nearopt}
        \hspace{3em}&\hspace{-3em}\|\bm{A}(I_{i,L},:) - \bm{U}_i\bm{U}_i^\T \bm{A}(I_{i,L},:)\|_\F^2 + \|\bm{A}(:,I_{i,L}) - \bm{A}(:,I_{i,L}) \bm{V}_i\bm{V}_i^\T\|_\F^2\\
        &\leq (1+\varepsilon)\left[ \mathcal{E}_k(\bm{A}(I_{i,L},:))^2 + \mathcal{E}_k(\bm{A}(:,I_{i,L}))^2\right] + (1+\varepsilon)\left[\|\bm{E}(I_{i,L},:)\|_\F^2 + \|\bm{D}(:,I_{i,L})\|_\F^2\right].
    \end{split}
    \end{align}
    By a union bound over all $2^L$ blocks, \cref{eq:nearopt} holds for all $i = 1,\ldots,2^L$ with probability at least $1-\delta$. 
    Hence, 
    \begin{align*}
        \|\bm{A} - \bm{U}\bm{U}^\T\bm{A} \bm{V}\bm{V}^\T\|_\F^2 
        &\leq \|\bm{A} - \bm{U}\bm{U}^\T \bm{A}\|_\F^2 + \|\bm{A} - \bm{A} \bm{V}\bm{V}^\T\|_\F^2\\
        &=\sum\limits_{i=1}^{2^L}\left[\|\bm{A}(I_{i,L},:) - \bm{U}_i\bm{U}_i^\T \bm{A}(I_{i,L},:)\|_\F^2 + \|\bm{A}(:,I_{i,L}) - \bm{A}(:,I_{i,L}) \bm{V}_i\bm{V}_i^\T\|_\F^2\right]\\
        & \leq (1+\varepsilon) \sum\limits_{i=1}^{2^L}\left[\mathcal{E}_k(\bm{A}(I_{i,L},:))^2 + \mathcal{E}_k(\bm{A}(:,I_{i,L}))^2\right] \\
        &\hspace{3em}+ (1+\varepsilon) \sum\limits_{i=1}^{2^L} \left[\|\bm{E}(I_{i,L},:)\|_\F^2 + \|\bm{D}(:,I_{i,L})\|_\F^2\right]\\
        &\leq 2(1+\varepsilon) \min\limits_{\bm{C} \in \bsep(L,k)}\|\bm{A} - \bm{C}\|_\F^2 + (1+\varepsilon) \left[\|\bm{E}\|_\F^2 + \|\bm{D}\|_\F^2\right],
    \end{align*}
    holds with probability $1-\delta$, as required, where the final inequality uses \Cref{theorem:bsep}.
\end{proof}
Having established an approximation guarantee using the $\bsep(L, k)$ family, we can apply it across all levels to get a guarantee on the butterfly approximation produced by~\cref{alg:butterfly_greedy_matvec}. 
At every level of the recursion in \Cref{alg:butterfly_greedy_matvec}, the sketches will inherit errors due to $\bm{A}$ not being exactly butterfly. 
However, a key property that allows us to control these errors is due to that they are additive and independent. 
The following lemma gives an approximation guarantee of the approximation returned by \Cref{alg:butterfly_greedy_matvec} if the sketches are corrupted by additive and independent errors, and our main result will be a simple corollary.
Although the top-level sketches are exact in the algorithm, we state the lemma with arbitrary additive perturbations at every level so that the errors generated earlier in the recursion can be absorbed into the inductive hypothesis. 

\begin{lemma}\label[lemma]{lemma:preparation}
    Let $\bm{A} \in \mathbb{R}^{2^{L+1}k \times 2^{L+1}k}$ where $L$ is even. 
    Let $\alpha \geq 1$ and for $\ell = L,L-2\ldots,2,0$ define $\bm{\Omega}^{(\ell)}$ and $\bm{\Psi}^{(\ell)}$ as in \cref{eqn:sketch_def} and for $c = 1,\ldots,C_{\ell}$, where $\{C_{\ell}\}$ are fixed given numbers, let $\widetilde{\bm{\Omega}}^{(\ell,c)}$ and $\widetilde{\bm{\Psi}}^{(\ell,c)}$ be independent copies of $\bm{\Omega}^{(\ell)}$ and $\bm{\Psi}^{(\ell)}$, respectively. 
    Let  $\bm{E}^{(\ell,c)},\bm{D}^{(\ell,c)} \in \mathbb{R}^{2^{L+1}k \times 2^{L+1}k}$ be fixed matrices. 
    Define
\begin{equation*}
    \bm{Y}^{(\ell)} = \bm{A} \bm{\Omega}^{(\ell)} + \sum\limits_{c=1}^{C_{\ell}} \bm{E}^{(\ell,c)}\widetilde{\bm{\Omega}}^{(\ell,c)}, \quad \bm{Z}^{(\ell)} = \bm{A}^\T \bm{\Psi}^{(\ell)} + \sum\limits_{c=1}^{C_{\ell}} \bm{D}^{(\ell,c)\T}\widetilde{\bm{\Psi}}^{(\ell,c)}.
\end{equation*}
For all fixed choices of $\delta \in (0,1/2]$, $\varepsilon \in (0,1]$, $\{C_{\ell}\},\{\bm{E}^{(\ell,c)}\},\{\bm{D}^{(\ell,c)}\}$, if $\alpha = O\left(\frac{L +\log(1/\delta)}{\varepsilon^2} \right)$ then the output $\bm B$ of \Cref{alg:butterfly_greedy_matvec} satisfies with probability at least $1-\frac{L+2}{2}\delta$
\begin{equation*}
    \|\bm{A} - \bm{B}\|_\F^2 \leq 2((2+\varepsilon)^{\frac{L+2}{2}}-1) \min\limits_{\bm{C}\in \butterfly(L,k)}\|\bm{A} - \bm{C}\|_\F^2 + (1+\varepsilon) \sum\limits_{i=0}^{L/2} (2+\varepsilon)^i \sum\limits_{c=1}^{C_{2i}}\left[\|\bm{E}^{(2i,c)}\|_\F^2 + \|\bm{D}^{(2i,c)}\|_\F^2\right].
\end{equation*}
\end{lemma}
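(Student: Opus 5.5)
We argue by induction on $h$, where $L = 2h$, mirroring the proof of \Cref{theorem:meta_butterfly} but carrying the additive noise terms along. Write $\OPT_L(\bm{A}) := \min_{\bm{C}\in\butterfly(L,k)}\|\bm{A}-\bm{C}\|_\F^2$.

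\textbf{Base case $L=0$.} Here $\bm{Y}^{(0)} = \bm{A}\bm{\Omega}^{(0)} + \sum_c \bm{E}^{(0,c)}\widetilde{\bm{\Omega}}^{(0,c)}$. Stacking the $c$-terms into one Gaussian sketch and applying \Cref{thm:perturbation1} gives, with probability $1-\delta/2$,
\[
\|\bm{A}-\bm{U}\bm{U}^\T\bm{A}\|_\F^2 \le (1+\varepsilon)\bigl[\OPT_0(\bm{A})+\textstyle\sum_c\|\bm{E}^{(0,c)}\|_\F^2\bigr].
\]
The generalized Nyström step $\bm{U}(\bm{\Psi}^\T\bm{U})^\dagger\bm{Z}^{(0)\T}$ then needs a bound involving $\|\bm{A}-\bm{U}\bm{U}^\T\bm{A}\|_\F$ and the $\bm{D}$-noise. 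We can use the standard generalized Nyström guarantee, since $\bm{\Psi}^\T\bm{U}$ with $\bm{U}$ of $k$ orthonormal columns is a $\alpha k\times k$ Gaussian matrix, well conditioned for large $\alpha$. Alternatively, we can rephrase the step as a $\bsep(0,k)$-type projection and invoke \Cref{lemma:perturbation2}. Either route should yield
\[
\|\bm{A}-\bm{B}\|_\F^2 \le 2(1+\varepsilon)\OPT_0(\bm{A}) + (1+\varepsilon)\textstyle\sum_c\bigl[\|\bm{E}^{(0,c)}\|_\F^2+\|\bm{D}^{(0,c)}\|_\F^2\bigr],
\]
which is the claim, since $2((2+\varepsilon)-1)\ge 2(1+\varepsilon)$. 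This base step is delicate: the constants of the Nyström analysis must fit within the factor $2(1+\varepsilon)$, possibly after rescaling $\varepsilon$.

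\textbf{Inductive step.} First, apply \Cref{lemma:perturbation2} to the level-$L$ sketches, again stacking the $C_L$ noise sketches into one Gaussian. With probability $1-\delta$ this bounds $\|\bm{A}-\bm{U}\widehat{\bm{X}}\bm{V}^\T\|_\F^2$ by
\[
2(1+\varepsilon)\OPT_L(\bm{A}) + (1+\varepsilon)\textstyle\sum_c\bigl[\|\bm{E}^{(L,c)}\|_\F^2+\|\bm{D}^{(L,c)}\|_\F^2\bigr],
\]
using $\butterfly\subseteq\bsep$. Pythagoras then gives
\[
\|\bm{A}-\bm{B}\|_\F^2 = \|\bm{A}-\bm{U}\widehat{\bm{X}}\bm{V}^\T\|_\F^2 + \textstyle\sum_{s,t}\|\widehat{\bm{X}}_{s,t}-\bm{X}_{s,t}\|_\F^2.
\]

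\textbf{Key step (main obstacle).} We must show that the recursive inputs $\widehat{\bm{Y}}^{(\ell)}_{s,t}$ have exactly the form the lemma assumes, for the matrix $\widehat{\bm{X}}_{s,t}$ at level $L-2$. Conditioning on $\bm{U},\bm{V}$, which depend only on level-$L$ randomness, we write
\[
\bm{A}\bm{\Omega}^{(\ell)}_{[t]} = \bm{A}\bm{V}_{[t]}\bm{V}_{[t]}^\T\bm{\Omega}^{(\ell)}_{[t]} + \bm{A}\bigl(\bm{I}-\bm{V}_{[t]}\bm{V}_{[t]}^\T\bigr)\bm{\Omega}^{(\ell)}_{[t]},
\]
where each piece is understood restricted to the $t$-th column half. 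Multiplying by $\bm{U}_{[s]}^\T$, the first term equals $\widehat{\bm{X}}_{s,t}\bigl(\bm{V}_{[t]}^\T\bm{\Omega}^{(\ell)}_{[t]}\bigr)$. Since $\bm{V}_{[t]}$ is block-diagonal with orthonormal columns, $\bm{V}_{[t]}^\T\bm{\Omega}^{(\ell)}_{[t]}$ has the block-diagonal Gaussian law required at level $L-2$. The second term is independent of the first, because orthogonal projections of a Gaussian are independent; after padding, it is a new noise term of the required form. Its Frobenius norm is at most $\|\bm{A}-\bm{A}\bm{V}\bm{V}^\T\|_\F$, and the inherited terms become $\bm{U}^\T\bm{E}^{(\ell,c)}$, with norm at most $\|\bm{E}^{(\ell,c)}\|_\F$. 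One must check that independence and block structure are preserved after this reshaping.

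\textbf{Assembling the bound.} The inductive hypothesis applies to each of the four blocks, and \Cref{theorem:contraction} bounds $\sum_{s,t}\OPT_{L-2}(\widehat{\bm{X}}_{s,t})$ by $\OPT_L(\bm{A})$. The new noise terms, summed over $s,t$, total at most
\[
\|\bm{A}-\bm{U}\bm{U}^\T\bm{A}\|_\F^2 + \|\bm{A}-\bm{A}\bm{V}\bm{V}^\T\|_\F^2,
\]
which by \eqref{eq:nearopt} is bounded by the same level-$L$ quantity. Inherited noise at level $2i$ then picks up one extra factor $(2+\varepsilon)$ per level, and the $\OPT$ constant satisfies
\[
c_L = 2(1+\varepsilon) + c_{L-2} + (2+\varepsilon)^{L/2}\cdot 2(1+\varepsilon)\text{-type terms},
\]
resolving to $2((2+\varepsilon)^{(L+2)/2}-1)$; this bookkeeping needs care. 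The failure probability adds $\delta$ per level, giving $\frac{L+2}{2}\delta$. This requires that the four subproblems at each level share events, i.e. the union bound is taken per level via \Cref{lemma:perturbation2}'s union over blocks, with $\alpha$ absorbing the constant.
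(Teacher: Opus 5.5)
Your proposal follows the paper's proof essentially step for step: induction on $h=L/2$, a base case combining \Cref{thm:perturbation1} with a noisy generalized Nystr\"om estimate, and an inductive step built from \Cref{lemma:perturbation2}, Pythagoras, splitting each lower-level sketch through $\bm{V}_{[t]}$ and its block-diagonal complement $\bm{V}_{[t],\bot}$ (resp.\ $\bm{U}_{[s]}$, $\bm{U}_{[s],\bot}$) so that the noise model reproduces itself, \Cref{theorem:contraction}, and bounding the injected noise by the level-$L$ error (your $\|\bm{A}-\bm{U}\bm{U}^\T\bm{A}\|_\F^2+\|\bm{A}-\bm{A}\bm{V}\bm{V}^\T\|_\F^2$ has the same upper bound as the paper's $\|\bm{A}-\bm{U}\widehat{\bm{X}}\bm{V}^\T\|_\F^2$). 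The loose ends you flagged close as follows: at $L=0$ the tool is not \Cref{lemma:perturbation2}, since no projection onto a computed $\bm{V}$ occurs, but the identity $\bm{M}-\bm{U}^\T\bm{A}=\bm{\Psi}_1^{\dagger}\bm{\Psi}_2\bm{U}_{\bot}^\T\bm{A}+\bm{\Psi}_1^{\dagger}\widetilde{\bm{\Psi}}^\T\bm{D}$, with $\bm{\Psi}_1=\bm{\Psi}^{(0)\T}\bm{U}$, $\bm{\Psi}_2=\bm{\Psi}^{(0)\T}\bm{U}_{\bot}$ and $\widetilde{\bm{\Psi}}^\T\bm{D}$ the stacked $\bm{D}$-noise, together with \Cref{lemma:gaussians}, which costs only $O(\varepsilon^2)$ and leaves ample slack; your ``padding'' is exactly the $\bm{V}_{[t],\bot}$ factorization, which has the right shape and block structure because each leaf complement has exactly $k$ columns; the total weight on the level-$L$ error is $1+\sum_{i<h}(1+\varepsilon)(2+\varepsilon)^i=(2+\varepsilon)^h$, so your constant satisfies $c_L=c_{L-2}+2(1+\varepsilon)(2+\varepsilon)^{L/2}$, which resolves to $2((2+\varepsilon)^{(L+2)/2}-1)$; and the probability bound comes from applying the inductive hypothesis with $\delta/4$ to each of the four subproblems, the extra $\log 4$ being absorbed because $L-2+\log(4/\delta)\le L+\log(1/\delta)$.
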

Before proceeding with the proof, we need a simple lemma. 
\begin{lemma}\label[lemma]{lemma:gaussians}
    Let $\alpha > 1$ and let $\bm{\Psi}_1 \sim \gaussian(\alpha k,k)$ let $\bm{\Psi}_2 \sim \gaussian(\alpha k,n)$ be independent and let $\bm{D}$ be a fixed matrix. Then, for $t \geq 0$ and $s \geq 1$, with probability at least $1-\exp\left(-\frac{t^2}{4(1+t)}\right) - s^{(1-\alpha)k}$ we have $\|\bm{\Psi}_1^{\dagger} \bm{\Psi}_2 \bm{D}\|_\F^2 \leq \frac{3s^2(1+t)}{\alpha-1}\|\bm{D}\|_\F^2$.
    In particular, $\|\bm{\Psi}_1^{\dagger} \bm{\Psi}_2 \bm{D}\|_\F^2 \leq \varepsilon^2 \|\bm{D}\|_\F^2$ with probability $ \geq 1-\delta \geq 1/2$ if $\alpha = O\left(\frac{\log(1/\delta)}{\varepsilon^2}\right)$.
\end{lemma}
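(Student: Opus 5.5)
The plan is to condition on $\bm{\Psi}_1$ and use the independence of $\bm{\Psi}_2$. Write $\bm{G} := \bm{\Psi}_1^{\dagger}\bm{\Psi}_2\bm{D}$. Since $\bm{\Psi}_1 \sim \gaussian(\alpha k,k)$ is tall with $\alpha>1$, it has full column rank almost surely, so $\bm{\Psi}_1^{\dagger} \in \mathbb{R}^{k\times \alpha k}$ is well defined.

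Conditioned on $\bm{\Psi}_1$, the matrix $\bm{G}$ is a Gaussian linear image of $\bm{\Psi}_2$, and
\[
\|\bm{G}\|_\F^2 \stackrel{d}{=} \sum_{j} \sigma_j(\bm{\Psi}_1^{\dagger})^2\,\sigma_j'(\bm{D})^2\, g_{j}^2-\text{type sum}.
\]
More concretely, $\mathbb{E}[\|\bm{G}\|_\F^2\mid \bm{\Psi}_1] = \|\bm{\Psi}_1^{\dagger}\|_\F^2\|\bm{D}\|_\F^2$. Moreover, $\|\bm{G}\|_\F^2$ is a weighted chi-square $\sum_{j,m} \sigma_j(\bm{\Psi}_1^\dagger)^2\sigma_m(\bm{D})^2 \xi_{jm}^2$ with i.i.d. standard normals $\xi_{jm}$. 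This follows by rotating with the SVDs of $\bm{\Psi}_1^{\dagger}$ and $\bm{D}$ and using the orthogonal invariance of $\bm{\Psi}_2$.

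The steps are as follows.

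First, apply a Laurent--Massart type upper tail for nonnegative weighted chi-squares. With weights $a_{jm}$ summing to $\|\bm{\Psi}_1^\dagger\|_\F^2\|\bm{D}\|_\F^2$ and $\max a \le \sum a$, we get
\[
\mathbb{P}\Big(\textstyle\sum a_{jm}\xi_{jm}^2 \geq (1+t)\sum a_{jm}\Big) \leq \exp\!\left(-\frac{t^2}{4(1+t)}\right).
\]
This follows from the Chernoff bound $\mathbb{E} e^{\lambda a\xi^2} = (1-2\lambda a)^{-1/2}$, taking $\lambda$ on the scale of $1/\sum a$ and optimizing. The worst case is a single nonzero weight, which gives exactly a one-dimensional chi-square tail of this form.

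Second, bound $\|\bm{\Psi}_1^{\dagger}\|_\F^2$. I would invoke the standard result of Halko--Martinsson--Tropp [Prop.~10.4 type]: for a $\gaussian(\alpha k, k)$ matrix, $\|\bm{\Psi}_1^\dagger\|_\F^2 = \tr((\bm{\Psi}_1^\T\bm{\Psi}_1)^{-1})$. The trace has expectation $k/(\alpha k - k - 1)$, and the tail is $\mathbb{P}(\|\bm{\Psi}_1^\dagger\|_\F \geq s\sqrt{3k/(\alpha k-k+1)}) \le s^{-(\alpha k - k)}$. Since $3k/(\alpha k-k+1)\le 3/(\alpha-1)$, this gives $\|\bm{\Psi}_1^\dagger\|_\F^2 \le 3s^2/(\alpha-1)$ except with probability $s^{(1-\alpha)k}$. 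The main obstacle is matching these exact constants; I would cite HMT's Frobenius bound on the pseudoinverse rather than rederive it.

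Third, combine the two bounds by a union bound to get the first claim. For the ``in particular'' statement, choose $s=2$ and $t = c\log(1/\delta)$ so that each failure probability is at most $\delta/2$. For $t\ge1$, $\exp(-t^2/(4(1+t)))\le e^{-t/8}$, and $2^{(1-\alpha)k}\le\delta/2$ once $\alpha \gtrsim \log(1/\delta)$. The resulting factor is $12(1+t)/(\alpha-1) = O(\log(1/\delta)/\alpha)\le \varepsilon^2$ when $\alpha = O(\log(1/\delta)/\varepsilon^2)$ with a suitable constant, using $\delta\le 1/2$ so that $\log(1/\delta)\gtrsim 1$.
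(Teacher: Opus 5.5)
Your proposal is correct and follows essentially the same route as the paper. You condition on $\bm{\Psi}_1$ and bound the Gaussian quadratic form by $(1+t)\|\bm{\Psi}_1^{\dagger}\|_\F^2\|\bm{D}\|_\F^2$. The paper does this by vectorizing as $(\bm{D}^\T\otimes\bm{\Psi}_1^{\dagger})\mathrm{vec}(\bm{\Psi}_2)$ and citing Cortinovis--Kressner, where you diagonalize to a weighted chi-square and use a Laurent--Massart/Chernoff bound. Then you apply HMT Proposition~10.4 with a union bound, exactly as the paper does. The only other difference is in the second claim: the paper solves $s^{(1-\alpha)k}=\delta/2$ for $s$ (hence its constant $12e$), whereas you fix $s=2$, and either choice gives $\alpha = O(\log(1/\delta)/\varepsilon^2)$.
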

\begin{proof}
    Conditioned on $\bm{\Psi}_1$, we have by \cite[Theorem 1]{kressnercortinovis} with probability at least $1-\exp\left(-\frac{t^2}{4(1+t)}\right)$ 
    \begin{equation*}
        \|\bm{\Psi}_1^{\dagger} \bm{\Psi}_2 \bm{D}\|_\F^2 = \|(\bm{D}^\T\otimes\bm{\Psi}_1^{\dagger}) \mathrm{vec}(\bm{\Psi}_2)\|_\F^2 \leq (1+t)\|\bm{D}^\T \otimes \bm{\Psi}_1^{\dagger}\|_\F^2 = (1+t)\|\bm{D}\|_\F^2 \|\bm{\Psi}_1^{\dagger}\|_\F^2.
    \end{equation*}
    The first result is then obtained by applying \cite[Proposition 10.4]{HalkoMartinssonTropp:2011} and the union bound. The second result is obtained by equating $\exp\left(-\frac{t^2}{4(1+t)}\right) = s^{(1-\alpha)k} = \delta/2$ to obtain $s^2 = (2/\delta)^{\frac{2}{(\alpha-1)k}}$, $1+t \leq 4(1+\log(2/\delta))$, and $\frac{3s^2(1+t)}{\alpha-1} \leq \frac{12}{\alpha-1}(1+\log(2/\delta))(2/\delta)^{\frac{2}{(\alpha-1)k}}$. The final term is upper bounded by $\varepsilon^2$ if $\alpha = 1 + \frac{12e(1+\log(2/\delta))}{\varepsilon^2}$. 
\end{proof}
We now proceed with the proof of \Cref{lemma:preparation}. 
\begin{proof}[Proof of \Cref{lemma:preparation}]
    The proof is by induction on $h := L/2$ and will be split into 4 steps.
    
    \paragraph{Step 1 (Base case):} Assume $h = 0$. We prove a perturbation bound for a special case of the generalized Nyström method. We apply the Pythagorean theorem to obtain
    \begin{align*}
        \|\bm{A} - \bm{U}\bm{X} \bm{V}^\T\|_\F^2 & = \|\bm{A} - \bm{U}\bm{U}^\T \bm{A}\|_\F^2 + \|\bm{U}^\T \bm{A} - \bm{M}\|_\F^2,
    \end{align*}
    where $\bm {M}$ is defined in the $L= 0$ case of~\cref{alg:butterfly_greedy_matvec}.
    Define
    \begin{equation*}
        \bm{D} = \begin{bmatrix} \bm{D}^{(0,1)} \\ \vdots \\ \bm{D}^{(0,C_0)}\end{bmatrix}, \quad \widetilde{\bm{\Psi}} = \begin{bmatrix} \widetilde{\bm{\Psi}}^{(0,1)} \\ \vdots \\ \widetilde{\bm{\Psi}}^{(0,C_0)} \end{bmatrix} \quad \Rightarrow \quad \bm{D}^\T \widetilde{\bm{\Psi}} = \sum\limits_{c=1}^{C_0} \bm{D}^{(0,c)\T} \bm{\Psi}^{(0,c)}.
    \end{equation*}
    Then,
    \begin{equation*}
        \bm{M} = (\bm{\Psi}^{(0)\T} \bm{U})^{\dagger} \bm{Z}^{(0)\T} = (\bm{\Psi}^{(0)\T} \bm{U})^{\dagger} \left[\bm{\Psi}^{(0)\T} \bm{A} + \widetilde{\bm{\Psi}}^\T \bm{D}\right].
    \end{equation*}
    Let $\bm{U}_{\bot}$ be an orthonormal basis for the orthogonal complement of $\range(\bm{U})$ and define $\bm{\Psi}_1 = \bm{\Psi}^{(0)\T} \bm{U}$ and $\bm{\Psi}_2 = \bm{\Psi}^{(0)\T} \bm{U}_{\bot}$. Note that, conditioned on $\bm{U}$, $\bm{\Psi}_1$ and $\bm{\Psi}_2$ are independent Gaussian random matrices, which are also independent of $\widetilde{\bm{\Psi}}$.
    By \cite[Proof of Lemma 5.1]{ChenDumanKelesHalikiasMuscoMuscoPersson:2025}  and the inequality $(x+y)^2 \leq 2x^2 + 2y^2$ for $x,y \geq 0$ we have
    \begin{equation}\label{eq2}
        \|\bm{U}^\T \bm{A} - \bm{M}\|_\F^2 = \|\bm{\Psi}_1^{\dagger} \bm{\Psi}_2 \bm{U}_{\bot}^\T\bm{A} + \bm{\Psi}_1^{\dagger} \widetilde{\bm{\Psi}}^\T\bm{D}\|_\F^2 \leq 2\|\bm{\Psi}_1^{\dagger} \bm{\Psi}_2 \bm{U}_{\bot}^\T\bm{A}\|_\F^2 + 2\|\bm{\Psi}_1^{\dagger} \widetilde{\bm{\Psi}}^\T\bm{D}\|_\F^2
    \end{equation}
    Since $\bm{\Psi}_1$ and $\bm{\Psi}_2$ have $\alpha k = O\left(\frac{k \log(1/\delta)}{\varepsilon^2}\right) = O\left(\frac{k \log(3/\delta)}{(0.4 \varepsilon)^2}\right)$, by the independence of all involved matrices, \Cref{lemma:gaussians}, and a union bound we have, conditioned on $\bm{U}$, that
    \begin{equation}\label{eq3}
    \begin{aligned}
        2\|\bm{\Psi}_1^{\dagger} \widetilde{\bm{\Psi}}^\T\bm{D}\|_\F^2 &\leq 2 \cdot 0.4^2 \cdot \varepsilon^2\|\bm{D}\|_\F^2 = 0.32\varepsilon^2 \sum\limits_{c=1}^{C_0}\|\bm{D}^{(0,c)}\|_\F^2
        \\
        2\|\bm{\Psi}_1^{\dagger} \bm{\Psi}_2 \bm{U}_{\bot}^\T\bm{A}\|_\F^2 &\leq 2 \cdot 0.4^2 \cdot \varepsilon^2\|\bm{U}_{\bot}^\T\bm{A}\|_\F^2  = 0.32\varepsilon^2 \|\bm{A} - \bm{U}\bm{U}^\T \bm{A}\|_\F^2, 
    \end{aligned}
    \end{equation}
    with probability at least $1-2\delta/3$.
    Define
    \begin{equation}\label{eq:perturbdef}
        \bm{E} = \begin{bmatrix} \bm{E}^{(0,1)} & \cdots & \bm{E}^{(0,C_0)} \end{bmatrix}, \quad \widetilde{\bm{\Omega}} = \begin{bmatrix} \widetilde{\bm{\Omega}}^{(0,1)} \\ \vdots \\ \widetilde{\bm{\Omega}}^{(0,C_0)} \end{bmatrix} \quad \Rightarrow \quad \bm{E} \widetilde{\bm{\Omega}} = \sum\limits_{c=1}^{C_0} \bm{E}^{(0,c)} \widetilde{\bm{\Omega}}^{(0,c)}.
    \end{equation}
    By \Cref{thm:perturbation1} with $\bm{E}$ and $\widetilde{\bm{\Omega}}$ as defined in \cref{eq:perturbdef}, we have with probability at least $1-\delta/3$
    \begin{equation}\label{eq1}
        \|\bm{A} - \bm{U}\bm{U}^\T \bm{A}\|_\F^2 \leq (1+0.4\varepsilon) \,\mathcal{E}_k(\bm{A})^2 + (1+0.4\varepsilon) \sum\limits_{c=1}^{C_0} \|\bm{E}^{(0,c)}\|_\F^2.
    \end{equation}
    Hence, by a union bound, combining \cref{eq1}, \cref{eq2}, and \cref{eq3} yields
    \begin{align*}
        \|\bm{A} - \bm{U} \bm{X} \bm{V}^\T \|_\F^2 &\leq (1+0.4\varepsilon)(1+0.32\varepsilon^2) \mathcal{E}_k(\bm{A})^2 + (1+0.4\varepsilon)(1+0.32\varepsilon^2)\sum\limits_{c=1}^{C_0}\|\bm{E}^{(0,c)}\|_\F^2 + 0.32\varepsilon^2 \sum\limits_{c=1}^{C_0}\|\bm{D}^{(0,c)}\|_\F^2,\\
        & \leq (1+\varepsilon) \mathcal{E}_k^2(\bm{A}) + (1+\varepsilon) \sum\limits_{c=1}^{C_0}\left[\|\bm{E}^{(0,c)}\|_\F^2 + \varepsilon \|\bm{D}^{(0,c)}\|_\F^2\right]\\
        & \leq 2(1+\varepsilon) \mathcal{E}_k^2(\bm{A}) + (1+\varepsilon) \sum\limits_{c=1}^{C_0}\left[\|\bm{E}^{(0,c)}\|_\F^2 + \varepsilon \|\bm{D}^{(0,c)}\|_\F^2\right]
    \end{align*}
    with probability at least $1-3\delta / 3 = 1-\delta$. 
    
    \paragraph{Step 2 (Inductive step -- Additive error of sketches):} Now assume that the results holds for $h = L/2 - 1$. We will prove it also holds for $h = L/2$.  
    The key observation is that the additional error introduced when passing sketches to the next level of the hierarchy has exactly the same form as the perturbation already appearing in the statement of the lemma. 
    As a result, the recursive sketches fit under the same error model, allowing the inductive hypothesis to be applied at the next level.
    We now verify this claim.

    Let $\bm{U}$ and $\bm{V}$ be the blockdiagonal bases computed in \Cref{alg:butterfly_greedy_matvec}. 
    Partition 
\begin{equation*}
    \bm{A} = \begin{bmatrix} \bm{A}_{1,1} & \bm{A}_{1,2} \\ \bm{A}_{2,1} & \bm{A}_{2,2} \end{bmatrix}, \quad \widehat{\bm{X}}:=\bm{U}^\T \bm{A} \bm{V} = \begin{bmatrix} \widehat{\bm{X}}_{1,1} & \widehat{\bm{X}}_{1,2} \\ 
                \widehat{\bm{X}}_{2,1} & \widehat{\bm{X}}_{2,2} \end{bmatrix}, \quad \bm{U}_{[s]}^\T \bm{A}_{s,t} \bm{V}_{[t]} = \widehat{\bm{X}}_{s,t} \in \mathbb{R}^{2^{L-1}k \times 2^{L-1}k}.
\end{equation*}
For $\ell = L-2,L-4,\ldots,0, c = 1,\ldots,C_{\ell}$ partition
\begin{equation*}
    \bm{E}^{(\ell,c)} = \begin{bmatrix} \bm{E}_{1,1}^{(\ell,c)} & \bm{E}_{1,2}^{(\ell,c)} \\
    \bm{E}_{2,1}^{(\ell,c)} & \bm{E}_{2,2}^{(\ell,c)} \end{bmatrix}, \quad \bm{D}^{(\ell,c)} = \begin{bmatrix} \bm{D}_{1,1}^{(\ell,c)} & \bm{D}_{1,2}^{(\ell,c)} \\
    \bm{D}_{2,1}^{(\ell,c)} & \bm{D}_{2,2}^{(\ell,c)} \end{bmatrix}, \quad \bm{Y}^{(\ell)} = \begin{bmatrix} \bm{Y}_{1,1}^{(\ell)} & \bm{Y}_{1,2}^{(\ell)} \\
    \bm{Y}_{2,1}^{(\ell)} & \bm{Y}_{2,2}^{(\ell)} \end{bmatrix}, \quad \bm{Z}^{(\ell)} = \begin{bmatrix} \bm{Z}_{1,1}^{(\ell)} & \bm{Z}_{2,1}^{(\ell)} \\
    \bm{Z}_{1,2}^{(\ell)} & \bm{Z}_{2,2}^{(\ell)} \end{bmatrix}.
\end{equation*}
Recall that  $\bm{U} = \blockdiag(\bm{U}_{[1]}, \bm{U}_{[2]})$, $\bm{V} = \blockdiag(\bm{V}_{[1]}, \bm{V}_{[2]})$, $\bm{\Omega}^{(\ell)} = \blockdiag(\bm{\Omega}_{[1]}^{(\ell)},\bm{\Omega}_{[2]}^{(\ell)})$, and $\widetilde{\bm{\Omega}}^{(\ell,c)} = \blockdiag(\widetilde{\bm{\Omega}}_{[1]}^{(\ell,c)},\widetilde{\bm{\Omega}}_{[2]}^{(\ell,c)})$, where the split is exactly in half. 
Let $\bm{V}_{\bot} = \blockdiag(\bm{V}_{1,\bot},\ldots,\bm{V}_{2^L,\bot}) = \blockdiag(\bm{V}_{[1],\bot}, \bm{V}_{[2],\bot})$ be an orthonormal basis for the orthogonal complement of $\bm{V}$. For $s, t \in \{1, 2\}$, $\widehat{\bm{Y}}_{s,t}$ from \Cref{alg:butterfly_greedy_matvec} has the form:
\begin{align*}
    \widehat{\bm{Y}}^{(\ell)}_{s,t}= \bm{U}_{[s]}^\T \bm{Y}_{s,t}^{(\ell)}
    &= \bm{U}_{[s]}^\T\bm{A}_{s,t} \bm{\Omega}_{[t]}^{(\ell)} + \sum\limits_{c=1}^{C_{\ell}}\bm{U}_{[s]}^\T\bm{E}^{(\ell,c)}_{s,t} \widetilde{\bm{\Omega}}^{(\ell,c)}_{[t]}\\
    &=\bm{U}_{[s]}^\T \bm{A}_{s,t} \bm{V}_{[t]}\cdot \bm{V}_{[t]}^\T \bm{\Omega}_{[t]}^{(\ell)} + \bm{U}_{[s]}^\T \bm{A}_{s,t} \bm{V}_{[t],\bot} \cdot \bm{V}_{[t],\bot}^\T \bm{\Omega}_{[t]}^{(\ell)} + \sum\limits_{c=1}^{C_{\ell}}\left[\bm{U}_{[s]}^\T\bm{E}^{(\ell,c)}_{s,t}\bm{V}_{[t]} \cdot \bm{V}_{[t]}^\T \widetilde{\bm{\Omega}}_{[t]}^{(\ell,c)} \right.\\
    &\hspace{15em}\left.+\bm{U}_{[s]}^\T\bm{E}^{(\ell,c)}_{s,t}\bm{V}_{[t],\bot} \cdot \bm{V}_{[t],\bot}^\T \widetilde{\bm{\Omega}}_{[t]}^{(\ell,c)}\right].
\end{align*}
We note that, conditioned on $\bm{\Omega}^{(L)}$ and $\bm{\Psi}^{(L)}$, $\bm{U}_{[s]}^\T \bm{A}_{s,t} \bm{V}_{[t]}\cdot \bm{V}_{[t]}^\T \bm{\Omega}_{[t]}^{(\ell)}$ is a sketch of $\bm{X}_{s,t}$ with a blockdiagonal Gaussian random matrix $\widehat{\bm{\Omega}}_{[t]}^{(\ell)}:=\bm{V}_{[t]}^\T \bm{\Omega}_{[t]}^{(\ell)}$. Furthermore, $\bm{U}_{[s]}^\T \bm{A}_{s,t} \bm{V}_{[t],\bot}, \bm{U}_{[s]}^\T\bm{E}^{(\ell,c)}_{s,t}\bm{V}_{[t]}$, and $\bm{U}_{[s]}^\T\bm{E}^{(\ell,c)}_{s,t}\bm{V}_{[t],\bot}$ are fixed matrices of size $2^{L-1}k \times 2^{L-1}k$. Moreover, $\widehat{\bm{\Omega}}_{[t]}^{(\ell)}, \bm{V}_{[t],\bot}^\T\bm{\Omega}_{[t]}^{(\ell)}, \bm{V}_{[t]}^\T \widetilde{\bm{\Omega}}_{[t]}^{(\ell,c)}$, and $\bm{V}_{[t],\bot}^\T \widetilde{\bm{\Omega}}_{[t]}^{(\ell,c)}$ are all \emph{independent} copies of blockdiagonal Gaussian matrices.
Hence, we can write
\begin{equation*}
    \widehat{\bm{Y}}_{s,t}^{(\ell)} = \widehat{\bm{X}}_{s,t}\widehat{\bm{\Omega}}_{[t]}^{(\ell)} + \sum\limits_{c=1}^{2C_{\ell}+1} \widehat{\bm{E}}_{s,t}^{(\ell,c)} \widetilde{\widehat{\bm{\Omega}}}_{[t]}^{(\ell,c)},
\end{equation*}
where, conditioned on $\bm{\Omega}^{(L)}$ and $\bm{\Psi}^{(L)}$, $\widehat{\bm{E}}_{s,t}^{(\ell,c)}$ are fixed matrices of size $2^{L-1}k \times 2^{L-1}k$, and the sketch matrices are \emph{independent} copies of blockdiagonal Gaussian matrices.
This illustrates that the additive structure of the errors are preserved in the recursion. 
Another important property that we will need is
\begin{align}
\begin{split}\label{eq:bound1}
    \sum\limits_{s,t=1}^{2} \sum\limits_{c=1}^{2C_{\ell}+1} \|\widehat{\bm{E}}_{s,t}^{(\ell,c)}\|_\F^2 &= \sum\limits_{s,t=1}^{2}\left[\|\bm{U}_{[s]}^\T \bm{A}_{s,t} \bm{V}_{[t],\bot}\|_\F^2+\sum\limits_{c=1}^{C_{\ell}} \left[\|\bm{U}_{[s]}^\T \bm{E}_{s,t}^{(\ell,c)} \bm{V}_{[t],\bot}\|_\F^2 + \|\bm{U}_{[s]}^\T \bm{E}_{s,t}^{(\ell,c)} \bm{V}_{[t]}\|_\F^2\right]\right]\\
    &= \|\bm{U}^\T \bm{A}(\bm{I} - \bm{V} \bm{V}^\T)\|_\F^2 + \sum\limits_{c=1}^{C_{\ell}}\|\bm{U}^\T\bm{E}^{(\ell,c)}\|_\F^2\\
    & \leq \|\bm{U}^\T \bm{A}(\bm{I} - \bm{V} \bm{V}^\T)\|_\F^2 + \sum\limits_{c=1}^{C_{\ell}}\|\bm{E}^{(\ell,c)}\|_\F^2.
\end{split}
\end{align}
By an identical argument, by interchanging the roles of $\bm{U}$ and $\bm{V}$, we can show that $\widehat{\bm{Z}}_{s,t}$ from \Cref{alg:butterfly_greedy_matvec} satisfies
\begin{equation*}
    \widehat{\bm{Z}}_{s,t} = \bm{V}_{[t]}^\T \bm{Z}^{(\ell)}_{s,t} = \widehat{\bm{X}}_{s,t}^\T \widehat{\bm{\Psi}}_{[s]}^{(\ell)} + \sum\limits_{c=1}^{2C_{\ell}+1} \widehat{\bm{D}}_{s,t}^{(\ell,c)\T} \widetilde{\widehat{\bm{\Psi}}}_{[s]}^{(\ell,c)},
\end{equation*}
where, conditioned on $\bm{\Omega}^{(L)}$ and $\bm{\Psi}^{(L)}$, $\widehat{\bm{E}}_{s,t}^{(\ell,c)}$ are fixed matrices of size $2^{L-1}k \times 2^{L-1}k$, and the sketch matrices are \emph{independent} copies of blockdiagonal Gaussian matrices. Moreover, the error terms satisfy
\begin{align}
\begin{split}\label{eq:bound2}
    \sum\limits_{s,t=1}^{2} \sum\limits_{c=1}^{2C_{\ell}+1} \|\widehat{\bm{D}}_{s,t}^{(\ell,c)}\|_\F^2 &\leq  \|(\bm{I}-\bm{U}\bm{U}^\T)\bm{A}\bm{V}\|_\F^2 + \sum\limits_{c=1}^{C_{\ell}}\|\bm{D}^{(\ell,c)}\|_\F^2.
\end{split}
\end{align}

\paragraph{Step 3 (Inductive step -- Recursive formula for approximation error):} Let $E(\bm{A},h,\{\bm{E}^{(\ell,c)}\},\{\bm{D}^{(\ell,c)}\}) = \|\bm{A} - \bm{B}\|_\F^2$ is the error of $\Cref{alg:butterfly_greedy_matvec}$ with $L = 2h$ levels and with additive errors in the sketches $\bm{Y}^{(\ell)}, \bm{Z}^{(\ell)}$ defined by the matrices $\bm{E}^{(\ell,c)}, \bm{D}^{(\ell,c)}$ for $\ell = 0,\ldots,L-2,L$ and $c = 1,\ldots,C_{\ell}$. 
We have
\begin{align*}
    E(\bm{A},h,\{\bm{E}^{(\ell,c)}\},\{\bm{D}^{(\ell,c)}\}) &= \|\bm{A} - \bm{U}\bm{X} \bm{V}^\T\|_\F^2 \\
    &= \|\bm{A} - \bm{U} \widehat{\bm{X}} \bm{V}^\T \|_\F^2 + \|\widehat{\bm{X}} - \bm{X}\|_\F^2 \tag{Pythagorean theorem}\\
    & =\|\bm{A} - \bm{U} \widehat{\bm{X}} \bm{V}^\T \|_\F^2+ \sum\limits_{s,t=1}^2\|\widehat{\bm{X}}_{s,t} - \bm{X}_{s,t}\|_\F^2\\
    &= \|\bm{A} - \bm{U} \widehat{\bm{X}} \bm{V}^\T \|_\F^2+ \sum\limits_{s,t=1}^2 E(\widehat{\bm{X}}_{s,t},h-1,\{\widehat{\bm{E}}_{s,t}^{(\ell,c)}\},\{\widehat{\bm{D}}_{s,t}^{(\ell,c)}\}), \numberthis \label{eq:errorbound}
\end{align*}
where $\widehat{\bm{E}}_{s,t}^{(\ell,c)}$ and $\widehat{\bm{D}}_{s,t}^{(\ell,c)}$ are as in \emph{Step 2}.
\paragraph{Step 4 (Inductive step -- Solving the recursion):} Define  $\OPT^2(\bm{A},h) := \min\limits_{\bm{C} \in \butterfly(2h,k)}\|\bm{A}-\bm{C}\|_\F^2$. Let $a_{h-1} = 2((2+\varepsilon)^{h}-1)$ and $b_i = (1+\varepsilon)(2+\varepsilon)^i$. Since $\alpha = O\left(\frac{L+\log(1/\delta)}{\varepsilon^2}\right) = O\left(\frac{L-2+ \log(4/\delta)}{\varepsilon^2}\right)$, conditioned on $\bm{\Omega}^{(L)}$ and $\bm{\Psi}^{(L)}$, by our inductive hypothesis we have with probability at least $1-\frac{L}{2}\frac{\delta}{4}$
\begin{equation}\label{eq:lowererrorbound}
     E(\widehat{\bm{X}}_{s,t},h-1,\{\widehat{\bm{E}}^{(\ell,c)}_{s,t}\}, \{\widehat{\bm{D}}^{(\ell,c)}_{s,t}\}) 
     \leq a_{h-1} \OPT^2(\widehat{\bm{X}}_{s,t},h-1) + \sum\limits_{i = 0}^{h-1} b_i\sum\limits_{c=1}^{2C_{2i}+1}\left[\|\widehat{\bm{E}}_{s,t}^{(2i,c)}\|_\F^2 + \|\widehat{\bm{D}}_{s,t}^{(2i,c)}\|_\F^2\right].
\end{equation}
By \eqref{eq:bound1} and \eqref{eq:bound2} we have
\begin{align}
\begin{split}\label{eq:errorsum}
\hspace{5em}&\hspace{-5em}\sum_{s,t=1}^2\sum\limits_{i = 0}^{h-1} b_i\sum\limits_{c=1}^{2C_{2i}+1}\left[\|\widehat{\bm{E}}_{s,t}^{(2i,c)}\|_\F^2 + \|\widehat{\bm{D}}_{s,t}^{(2i,c)}\|_\F^2\right] \\
& \leq \sum\limits_{i=0}^{h-1} b_i \left[\|\bm{U}^\T \bm{A} (\bm{I} - \bm{V}\bm{V}^\T)\|_\F^2 + \|(\bm{I} - \bm{U}\bm{U}^\T) \bm{A} \bm{V}\|_\F^2 + \sum\limits_{c=1}^{C_{2i}} \left[\| \bm{E}^{(2i,c)}\|_\F^2 + \|\bm{D}^{(2i,c)}\|_\F^2\right]\right]\\
& \leq \|\bm{A} - \bm{U}\bm{U}^\T \bm{A} \bm{V}\bm{V}^\T\|_\F^2 \sum\limits_{i=0}^{h-1} b_i +\sum\limits_{i=0}^{h-1} b_i\sum\limits_{c=1}^{C_{\ell}} \left[\| \bm{E}^{(2i,c)}\|_\F^2 + \|\bm{D}^{(\ell,c)}\|_\F^2\right]\\
& \leq \|\bm{A} - \bm{U}\widehat{\bm{X}}\bm{V}^\T\|_\F^2 \sum\limits_{i=0}^{h-1} b_i +\sum\limits_{i=0}^{h-1} b_i\sum\limits_{c=1}^{C_{2i}} \left[\| \bm{E}^{(2i,c)}\|_\F^2 + \|\bm{D}^{(2i,c)}\|_\F^2\right],
\end{split}
\end{align}
where we used $\|\bm{A} - \bm{U}\bm{U}^\T \bm{A}\bm{V}\bm{V}^\T\|_\F^2- \left(\|\bm{U}^\T \bm{A} (\bm{I} - \bm{V}\bm{V}^\T)\|_\F^2 + \|(\bm{I} - \bm{U}\bm{U}^\T) \bm{A} \bm{V}\|_\F^2\right) = \|(\bm{I}-\bm{U}\bm{U}^\T) \bm{A} (\bm{I} - \bm{V}\bm{V}^\T)\|_\F^2 \geq 0$.
Noting that by \Cref{theorem:contraction} we have 
\begin{equation}\label{eq:optsum}
    \sum\limits_{s,t=1}^2 \OPT^2(\widehat{\bm{X}}_{s,t},h-1)\leq \OPT^2(\bm{A},h).
\end{equation}
Conditioned on $\bm{\Omega}^{(L)}$ and $\bm{\Psi}^{(L)}$, by \Cref{eq:errorbound}, \Cref{eq:lowererrorbound}, \Cref{eq:errorsum}, and \Cref{eq:optsum} we have, by a union bound, with probability at least $1-4\frac{L}{2} \frac{\delta}{4} = 1-\frac{L}{2} \delta$
\begin{align*}
    E(\bm{A},h,\{\bm{E}^{(\ell,c)}\},\{\bm{D}^{(\ell,c)}\})&=\|\bm{A} - \bm{U} \widehat{\bm{X}} \bm{V}^\T \|_\F^2+ \sum\limits_{s,t=1}^2 E(\widehat{\bm{X}}_{s,t},h-1,\{\widehat{\bm{E}}_{s,t}^{(\ell,c)}\},\{\widehat{\bm{D}}_{s,t}^{(\ell,c)}\})\\
    &\leq \|\bm{A} - \bm{U} \widehat{\bm{X}} \bm{V}^\T \|_\F^2+ \sum\limits_{s,t=1}^2 \left[a_{h-1} \OPT^2(\widehat{\bm{X}}_{s,t},h-1) + \sum\limits_{i = 0}^{h-1} b_i\sum\limits_{c=1}^{2C_{2i}+1}\left[\|\widehat{\bm{E}}_{s,t}^{(2i,c)}\|_\F^2 + \|\widehat{\bm{D}}_{s,t}^{(2i,c)}\|_\F^2\right]\right]\\
    & \leq \left(1+\sum\limits_{i=0}^{h-1}b_i\right)\|\bm{A} - \bm{U} \widehat{\bm{X}} \bm{V}^\T \|_\F^2 + a_{h-1} \OPT^2(\bm{A},h) + \sum\limits_{i=0}^{h-1} b_i\sum\limits_{c=1}^{C_{2i}} \left[\| \bm{E}^{(2i,c)}\|_\F^2 + \|\bm{D}^{(2i,c)}\|_\F^2\right].
\end{align*}
By \Cref{lemma:perturbation2}, we have with probability $1-\delta$ that
    \begin{align*}
        \|\bm{A} - \bm{U}\widehat{\bm{X}}\bm{V}^\T\|_\F^2 \leq &2(1+\varepsilon) \min\limits_{\bm{C} \in \bsep(L,k)}\|\bm{A} - \bm{C}\|_\F^2 + (1+\varepsilon)\sum\limits_{c=1}^{C_L}\left[\|\bm{E}^{(L,c)}\|_\F^2 + \|\bm{D}^{(L,c)}\|_\F^2\right]\\
        \leq &2(1+\varepsilon) \OPT^2(\bm{A},h) + (1+\varepsilon)\sum\limits_{c=1}^{C_L}\left[\|\bm{E}^{(L,c)}\|_\F^2 + \|\bm{D}^{(L,c)}\|_\F^2\right]. \tag{$\butterfly(L,k) \subseteq \bsep(L,k)$}
    \end{align*}
Hence, by another union bound we have with probability $1-\delta - \frac{L}{2}\delta = 1-\frac{L+2}{2}\delta$
\begin{align*}
     E(\bm{A},h,\{\bm{E}^{(\ell,c)}\},\{\bm{D}^{(\ell,c)}\}) & \leq \left[2(1+\varepsilon)\left[1+\sum\limits_{i=0}^{h-1} b_i\right] + a_{h-1}\right]\OPT^2(\bm{A},h)\\
    &\hspace{2em}+ (1+\varepsilon)\left[1+\sum\limits_{i = 0}^{h-1} b_i\right]\sum\limits_{c=1}^{C_L}\left[\|\bm{E}^{(L,c)}\|_\F^2 + \|\bm{D}^{(L,c)}\|_\F^2\right] \\
    &\hspace{4em}+ \sum\limits_{i = 0}^{h-1} b_i \sum\limits_{c=1}^{C_{\ell}} \left(\| \bm{E}^{(\ell,c)}\|_\F^2 + \|\bm{D}^{(\ell,c)}\|_\F^2\right).
\end{align*}
Hence,
\begin{equation*}
    b_h = (1+\varepsilon) \left[1+\sum\limits_{i=0}^{h-1} b_i\right], \quad a_h = a_{h-1} + 2(1+\varepsilon)\left[1+\sum\limits_{i=0}^{h-1} b_i\right] = a_{h-1} + 2b_h.
\end{equation*}
Resolving the recursion with $b_0 = 1+\varepsilon$ and $a_0 = 2b_0$ yields
\begin{equation*}
    b_h = (1+\varepsilon)(2+\varepsilon)^h,
\end{equation*}
and
\begin{equation*}
    a_h = 2((2+\varepsilon)^h - 1) + 2(1+\varepsilon) (2+\varepsilon)^h = 2((2+\varepsilon)^{h+1} - 1),
\end{equation*}
as required.
\end{proof}
With \Cref{lemma:preparation} at hand, we are ready to prove our main result as a simple corollary. 
\begin{theorem}\label{theorem:matvecmain}
    Let $\bm{A} \in \mathbb{R}^{2^{L+1}k \times 2^{L+1}k}$, where $L$ is even. Fix $\varepsilon \in (0,1)$ and $\delta \in (0,1/2)$. Let $\bm{\Omega}^{(\ell)}, \bm{\Psi}^{(\ell)}$ be defined as in \cref{eqn:sketch_def} with $\alpha = O\left(\frac{L+\log(1/\delta)}{\varepsilon^2}\right)$. Then the output of \Cref{alg:butterfly_greedy_matvec} satisfies with probability at least $1-\delta$
\begin{equation*}
    \|\bm{A} - \bm{B}\|_\F^2 \leq 2((2+\varepsilon)^{\frac{L+2}{2}}-1) \min\limits_{\bm{C}\in \butterfly(L,k)}\|\bm{A} - \bm{C}\|_\F^2.
\end{equation*}
In particular, if $N = 2^{L+1}k$, \Cref{alg:butterfly_greedy_matvec} requires $O\left(\frac{\sqrt{Nk} \left(\log(N/k) + \log(1/\delta)\right)}{\varepsilon^2}\right)$ matvecs with $\bm{A}$ and $\bm{A}^\T$, and returns a matrix $\bm{B} \in \butterfly(L,k)$ satisfying with probability at least $1-\delta$
\begin{equation}\label{eq:upperbound}
    \|\bm{A} - \bm{B}\|_\F \leq O\left(\left(\frac{N}{k}\right)^{\frac{1}{4}(1 + \varepsilon)}\right) \min\limits_{\bm{C} \in \butterfly(L,k)}\|\bm{A} - \bm{C}\|_\F.
\end{equation}
\end{theorem}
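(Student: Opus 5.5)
The first bound follows from \Cref{lemma:preparation} by switching off all perturbations. In \Cref{alg:butterfly_greedy_matvec}, the top-level sketches are exact: $\bm{Y}^{(\ell)} = \bm{A}\bm{\Omega}^{(\ell)}$ and $\bm{Z}^{(\ell)} = \bm{A}^\T\bm{\Psi}^{(\ell)}$. So we apply \Cref{lemma:preparation} with $C_\ell = 0$ for every $\ell$, or equivalently with all $\bm{E}^{(\ell,c)}, \bm{D}^{(\ell,c)} = \bm{0}$. The additive sum then vanishes, and we obtain
\[
\|\bm{A}-\bm{B}\|_\F^2 \le 2((2+\varepsilon)^{\frac{L+2}{2}}-1)\min_{\bm{C}\in\butterfly(L,k)}\|\bm{A}-\bm{C}\|_\F^2
\]
with probability at least $1-\frac{L+2}{2}\delta$. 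To reach probability $1-\delta$, rerun the lemma with $\delta' = 2\delta/(L+2)$. The required oversampling becomes $\alpha = O\big((L+\log((L+2)/\delta))/\varepsilon^2\big) = O\big((L+\log(1/\delta))/\varepsilon^2\big)$, since $\log(L+2) \le L+1$. This is the stated choice of $\alpha$ up to constants.

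Next we count matvecs. The even levels are $\ell = 0,2,\dots,L$. At level $\ell$, the matrix $\bm{\Omega}^{(\ell)}$ is block-diagonal with $2^{(L-\ell)/2}$ blocks, each with $\alpha k$ columns. Hence $\bm{Y}^{(\ell)} = \bm{A}\bm{\Omega}^{(\ell)}$ costs $2^{(L-\ell)/2}\alpha k$ matvecs with $\bm{A}$, and $\bm{Z}^{(\ell)}$ costs the same number with $\bm{A}^\T$. Summing the geometric series gives $O(2^{L/2}\alpha k)$ matvecs. With $N = 2^{L+1}k$ we have $2^{L/2} = \sqrt{N/(2k)}$, so the total is $O(\alpha\sqrt{Nk})$. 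Substituting $\alpha$ and $L = \log_2(N/(2k))$ gives the claimed $O\big(\sqrt{Nk}(\log(N/k)+\log(1/\delta))/\varepsilon^2\big)$.

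Finally we derive \eqref{eq:upperbound}. Take square roots in the first bound: the multiplicative constant is at most $\sqrt{2}\,(2+\varepsilon)^{(L+2)/4}$. Write $(2+\varepsilon)^{(L+2)/4} = 2^{\frac{L+2}{4}\log_2(2+\varepsilon)}$. Since $\log_2(2+\varepsilon) = 1+\log_2(1+\varepsilon/2) \le 1+\varepsilon$ for $\varepsilon \in (0,1)$, this is at most $2^{\frac{L+2}{4}(1+\varepsilon)}$. Using $2^{L+2} = 2N/k$, we get $2^{\frac{L+2}{4}(1+\varepsilon)} = (2N/k)^{\frac14(1+\varepsilon)} = O\big((N/k)^{\frac14(1+\varepsilon)}\big)$, because $\varepsilon < 1$ keeps the leftover constant bounded.

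The main point needing care is the probability bookkeeping in the first step. We must check that rescaling $\delta$ by $(L+2)/2$ only changes $\alpha$ by a constant factor, which the bound $\log(L+2) \le L+1$ guarantees. Everything else is arithmetic.
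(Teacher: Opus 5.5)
Your proposal is correct and follows the paper's proof: apply \Cref{lemma:preparation} with all perturbations set to zero, replace $\delta$ by $2\delta/(L+2)$ (which changes $\alpha$ only by a constant factor), and bound $2((2+\varepsilon)^{\frac{L+2}{2}}-1)$ by $O\bigl((N/k)^{\frac{1}{2}(1+\varepsilon)}\bigr)$. Your geometric-series matvec count and the estimate $\log_2(2+\varepsilon)\le 1+\varepsilon$ fill in steps the paper leaves implicit.
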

\begin{proof}
By \Cref{lemma:preparation} with $\bm{E}^{(\ell,c)} = \bm{0}, \bm{D}^{(\ell,c)} = \bm{0}$ and $\alpha = O\left(\frac{L+\log(1/\widetilde{\delta})}{\varepsilon^2}\right)$ we have with probability at least $1-\frac{L+2}{2}\widetilde{\delta}$
\begin{equation*}
    \|\bm{A} - \bm{B}\|_\F^2 \leq 2((2+\varepsilon)^{\frac{L+2}{2}}-1) \min\limits_{\bm{C}\in \butterfly(L,k)}\|\bm{A} - \bm{C}\|_\F^2.
\end{equation*}
Letting $\widetilde{\delta} = \frac{2}{L+2} \delta$ and noting that $O(L+\log(1/\widetilde{\delta})) = O(L + \log(1/\delta))$ yields the desired result. The second statement follows from the fact that $2((2+\varepsilon)^{\frac{L+2}{2}}-1) = O\left(\left(\frac{N}{k}\right)^{\frac{1}{2}(1 + \varepsilon)}\right)$.
\end{proof}
Consequently, for $\varepsilon = O\left(1/L\right)$ the quasi-optimality constant becomes $O\left((N/k)^{1/4}\right)$, which is larger than the $\sqrt{\log_2(N/k)}$ quasi-optimality constants appearing in \Cref{theorem:meta_butterfly}.
However, for reasonable $N$ \eqref{eq:upperbound} still provides a strong guarantee. 
To illustrate, for $N = 10^7, k = 1, \varepsilon = 0.3$, the upper bound in \Cref{theorem:matvecmain} shows that the approximation error $\|\bm{A} - \bm{B}\|_\F \leq 221 \min\limits_{\bm{C} \in \butterfly(L,k)}\|\bm{A} - \bm{C}\|_\F$. 
Hence, \Cref{alg:butterfly_greedy_matvec} returns an approximation whose approximation error is guaranteed to be at most roughly 2 orders of magnitude larger than the best possible approximation. 

%% file: imgs/sketch_ex.tex
\newcommand{\drawOmega}[1]{%
  \pgfmathtruncatemacro{\N}{round(2^(#1))}
  \pgfmathtruncatemacro{\Nm}{\N-1}%
  \def\W{\N}%
  \def\H{32}%
  \pgfmathsetmacro{\bh}{\H/\N}%
  \begin{tikzpicture}[scale=.13]
    \ifnum\N>1\relax
      \foreach \i in {1,...,\Nm}{
        \draw[line width=.2px,black!20] (0,{\i*\bh}) -- (\W,{\i*\bh});
        \draw[line width=.2px,black!20] (\i,0) -- (\i,\H);
      }
    \fi
    \draw (0,0) rectangle (\W,\H);
    \foreach \i in {1,...,\N}{
      \draw[fill=black!10] (\i-1,{\H-(\i-1)*\bh}) rectangle (\i,{\H-\i*\bh});
    }
    \ifnum\N>1\relax
      \draw (0,{\H/2}) -- (\W,{\H/2});
      \draw ({\W/2},0) -- ({\W/2},\H);
    \fi
    \node[scale=.9,above] at ({\W/2},{\H+0.2}){$\bm{\Omega}^{(#1)}$};
  \end{tikzpicture}}

\newcommand{\drawOmegaSplit}[1]{%
  \pgfmathtruncatemacro{\N}{round(2^(#1))}%
  \pgfmathtruncatemacro{\Nm}{\N-1}%
  \def\W{\N}%
  \def\H{32}%
  \pgfmathsetmacro{\bh}{\H/\N}%
  \begin{tikzpicture}[scale=.13]
    \ifnum\N>1\relax
      \foreach \i in {1,...,\Nm}{
        \draw[line width=.2px,black!20] (0,{\i*\bh}) -- (\W,{\i*\bh});
        \draw[line width=.2px,black!20] (\i,0) -- (\i,\H);
      }
    \fi
    \draw (0,0) rectangle (\W,\H);
    \foreach \i in {1,...,\N}{
      \draw[fill=black!10] (\i-1,{\H-(\i-1)*\bh}) rectangle (\i,{\H-\i*\bh});
    }
    \ifnum\N>1\relax
      \draw (0,{\H/2}) -- (\W,{\H/2});
      \draw ({\W/2},0) -- ({\W/2},\H);
      \draw[fill=black!20,fill opacity=.9] (0,\H) rectangle ({\W/2},{\H/2})
        node[midway] {$\bm{\Omega}_{[1]}^{(#1)}$};
      \draw[fill=black!20,fill opacity=.9] ({\W/2},{\H/2}) rectangle (\W,0)
        node[midway] {$\bm{\Omega}_{[2]}^{(#1)}$};
      \draw[fill=white,fill opacity=.9] (0,{\H/2}) rectangle ({\W/2},0);
      \draw[fill=white,fill opacity=.9] ({\W/2},\H) rectangle (\W,{\H/2});
    \fi
    \node[scale=.9,above] at ({\W/2},{\H+0.2}){%
      \ifnum\N>1\relax
        $\blockdiag(\bm{\Omega}_{[1]}^{(#1)},\bm{\Omega}_{[2]}^{(#1)})%
         \vphantom{\left(\bm{\Omega}_1^{(#1)},\ldots,\bm{\Omega}^{(#1)}_{2^{#1}}\right)}$%
      \else
        $\bm{\Omega}_1^{(#1)}$%
      \fi};
  \end{tikzpicture}}
  
\drawOmega{0}
\hspace{.3cm}
\drawOmega{1}
\hspace{.3cm}
\drawOmega{2}
\hspace{.3cm}
\drawOmega{3}
\hspace{.3cm}
\drawOmega{4}
\hspace{1cm}
\drawOmegaSplit{4}

%% file: numerical.tex
\section{Numerical experiments} \label{sec:experiments}

We now provide computational examples to contextualize our theoretical results. In addition, we describe several extensions of the proposed algorithms from square matrices and dyadic index trees to general rectangular matrices and arbitrary index trees. These extensions are implemented in our open source Julia package (see \cref{fn:software}).

\subsection{Illustration of theoretical results}

First, we consider the case of an exactly butterfly matrix $\bm{A}_0 \in \R^{N \times N}$ which is corrupted by additive noise. To generate such a matrix, we sample all basis matrices $\bm{U}_i, \bm{V}_i \in \mathbb{R}^{2k \times k}$ in \Cref{theorem:recursive} uniformly from the set of matrices with orthonormal columns. At level $\ell = 0$, the core matrices $\bm{X} \in \mathbb{R}^{k \times k}$ are sampled from $\gaussian(k,k)$. In addition, we sample an additive error term $\bm{E} \sim \gaussian(N,N)$. We then define $\bm{A} = \frac{1}{\|\bm{A}_0\|_\F} \bm{A}_0 + \frac{\sigma}{\|\bm{E}\|_\F} \bm{E}$ with noise level $\sigma = 10^{-4}$. Finally, we compute butterfly approximations $\bm{B} \in \butterfly(L,k)$ to $\bm{A}$ using both the entry evaluation \cref{alg:butterfly_greedy_meta} and by computing matrix-vector products with $\bm{A}$ and using the randomized \cref{alg:butterfly_greedy_matvec} with various oversampling rates $\alpha$. The resulting recovery error $\|\bm{A} - \bm{B}\|_\F$ for various matrix sizes $N$ are shown in \cref{fig:butterfly-plus-noise}.

Several aspects of \cref{fig:butterfly-plus-noise} are noteworthy. First, we see in \cref{fig:random-errors-alpha} that as $\alpha$ increases, the recovery error in the randomized matvec algorithm converges rapidly to the error achieved by the entry evaluation algorithm. Furthermore, from \cref{fig:random-errors-N} we note that the error achieved by the entry evaluation algorithm approaches the lower bound on the optimal butterfly approximation error obtained by applying \cref{theorem:bsep} at each level $\ell=0,\dots,L$ and taking the maximum. Therefore, the entry evaluation algorithm and the matvec algorithm with sufficiently large $\alpha$ are both close to optimal for this family of test matrices. In addition, we note that by choosing the oversampling rate to be $\alpha = \big(L + \log(1 / \delta)\big) / \varepsilon^2$ as suggested by \cref{theorem:matvecmain}, the errors incurred by the matvec recovery algorithm are within a small factor of those incurred by the entry evaluation algorithm, suggesting that increasing the oversampling rate logarithmically with $N$ is indeed sufficient to control the recovery error. Finally, we see that the growth rates of the upper bounds \cref{theorem:meta_butterfly} and \cref{theorem:matvecmain} are both highly pessimistic for these particular test matrices. However, we emphasize that there may exist adversarial families of matrices for which these bounds are much tighter, or for which these asymptotic growth rates are observed.

\begin{figure}
    \centering
    \begin{subfigure}{0.28\textwidth}
        \centering
        \includegraphics[width=0.8\linewidth]{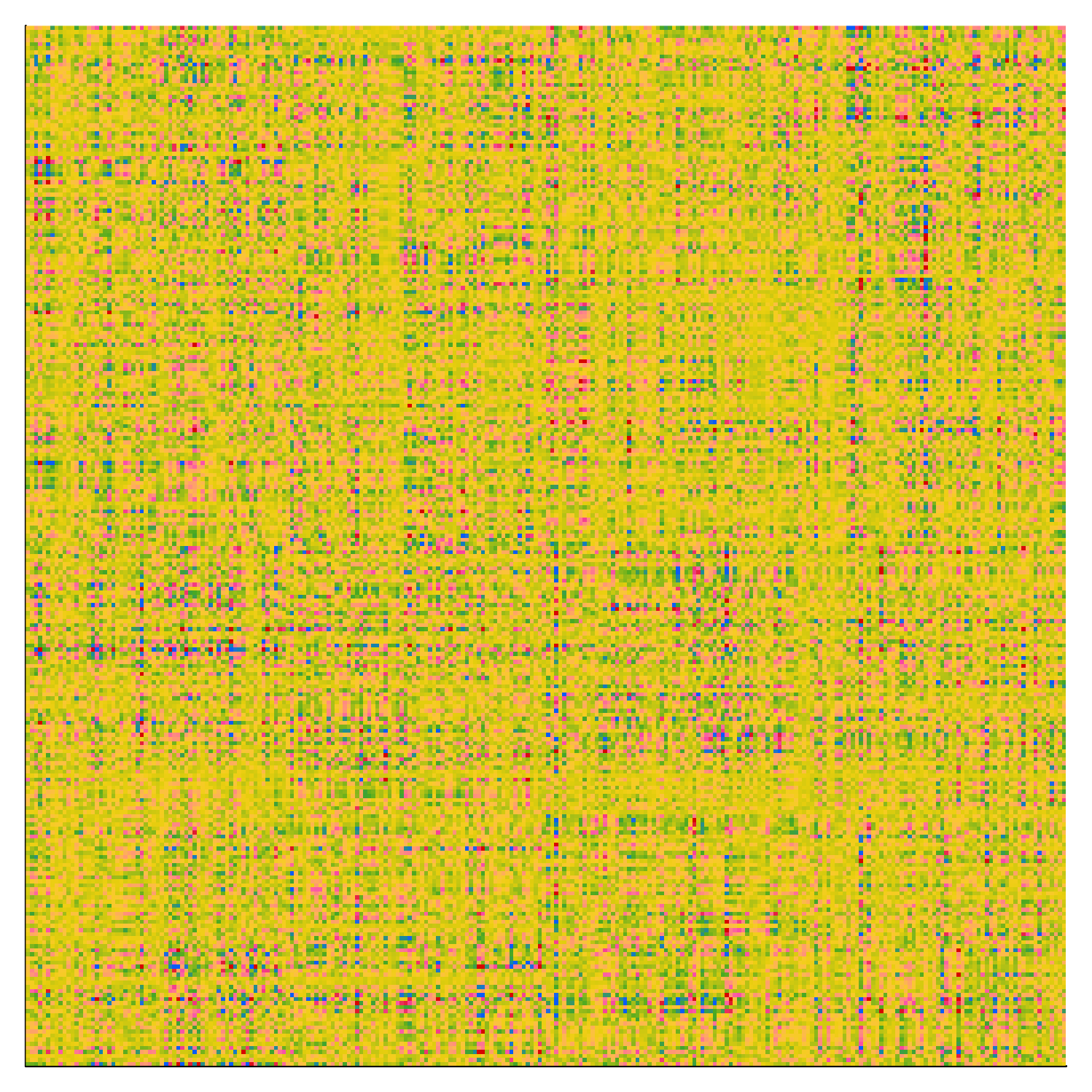}
        \caption{Random noisy butterfly matrix}
        \label{fig:random-matrix}
    \end{subfigure}
    \hfill
    \begin{subfigure}{0.35\textwidth}
        \centering
        \begin{overpic}[width=\linewidth]{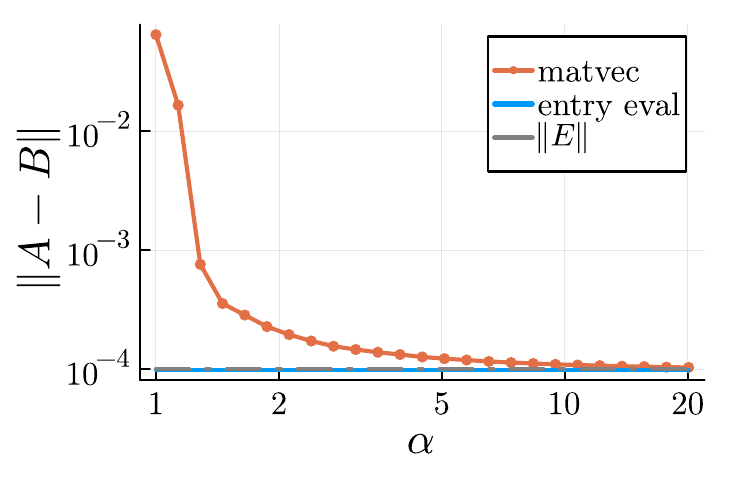}
        \put(-2,20){
            \rotatebox{90}{
                \colorbox{white}{$\|\bm{A}-\bm{B}\|_{\F}$}
            }
        }
        \end{overpic}
        \caption{Recovery error vs. $\alpha$ for $N = 1024$}
        \label{fig:random-errors-alpha}
    \end{subfigure}
    \hfill
    \begin{subfigure}{0.35\textwidth}
        \centering
        \begin{overpic}[width=\linewidth, trim={0 0.8cm 0 0}, clip]{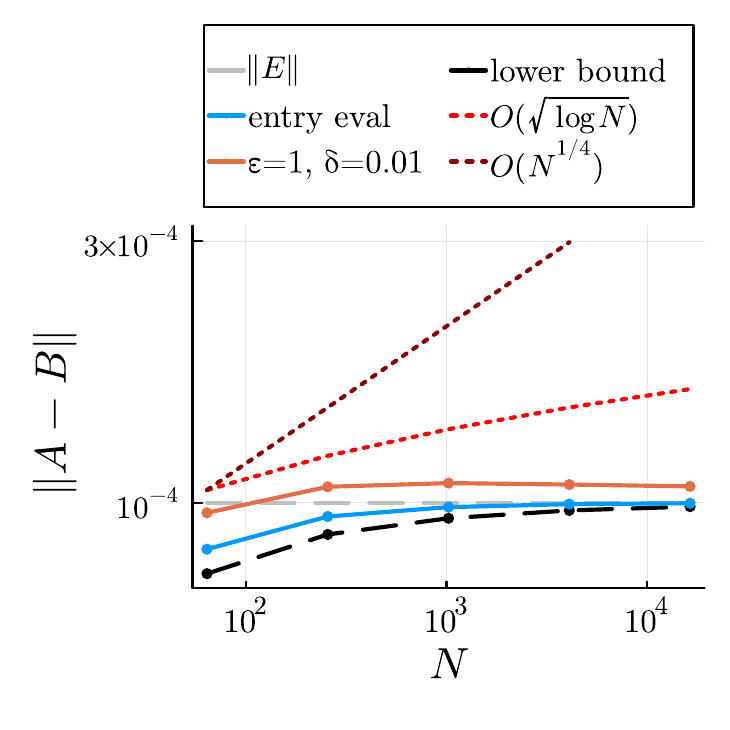}
        \put(0,20){
            \rotatebox{90}{
                \colorbox{white}{$\|\bm{A}-\bm{B}\|_{\F}$}
            }
        }
        \end{overpic}
        \caption{Recovery error vs. $N$}
        \label{fig:random-errors-N}
    \end{subfigure}

    \caption{Performance of \cref{alg:butterfly_greedy_meta} (labeled ``entry eval'') and \cref{alg:butterfly_greedy_matvec} (labeled ``matvec'') for random butterfly matrices with $k=8$ and additive noise $\sigma = 10^{-4}$. (\textbf{a}) Example random noisy butterfly matrix $\bm{A}$ with $L=4$. (\textbf{b}) Log-log plot of recovery error of \cref{alg:butterfly_greedy_matvec} for various oversampling rates $\alpha$ with fixed $N = 1024$. Error for \cref{alg:butterfly_greedy_meta} is also plotted for reference. (\textbf{c}) Log-log plot of recovery error for both algorithms. For \cref{alg:butterfly_greedy_matvec}, we take $\alpha = \big(L + \log(1 / \delta)\big) / \epsilon^2$ with $\delta = 0.01$ and $\epsilon = 1$. In addition, we display $O\big(\sqrt{\log N}\big)$ and $O(N^{1/4})$ reference lines corresponding to the upper bound growth rates in \cref{theorem:meta_butterfly} and \cref{theorem:matvecmain} respectively. Finally, we plot a lower bound on the optimal butterfly approximation error given by applying \cref{theorem:bsep} at each level $\ell=0,\dots,L$ and taking the maximum.}
    \label{fig:butterfly-plus-noise}
\end{figure}

\subsection{Generalizations and applications}

In order to apply Algorithms~\ref{alg:butterfly_greedy_meta} and~\ref{alg:butterfly_greedy_matvec} to more general operators appearing in applications, we describe several extensions which are standard practice in existing algorithms such as~\cite{oneil2010algorithm,liu2021sparse}. First, we note that by replacing transposes with Hermitian transposes, both algorithms can be used to compress complex matrices $\bm{A} \in \C^{N \times N}$.

We can also relax the assumption that $N = 2^{L+1}k$ for some even $L$ and fixed $k$. In practice, all that is required to run our recovery algorithms is a tree structure on the row and column indices for which $\rank\!\big(\bm{A}(I_{i,\ell},J_{j,L-\ell})\big) \leq k$ for all level-$\ell$ row index sets $I_{i,\ell}$ and level-$(L-\ell)$ column index sets $J_{j,L-\ell}$ for $\ell=0,\dots,L$. This tree structure should be constructed according to the properties of $\bm{A}$, and need not be a perfect dyadic tree. For example, if $\bm{A}_{pq} = K(\bm{x}_p, \bm{\omega}_q)$ is a kernel matrix defined by $\{\bm{x}_p\}_{p=1}^M \subset D_x$ and $\{\bm{\omega}_q\}_{q=1}^N \subset D_\omega$, then the index trees can be those induced by a partitioning of the domains $D_x$ and $D_\omega$. In particular, two different index trees of the same depth can be used for the rows and columns, which allows us to compress rectangular matrices $\bm{A} \in \C^{M \times N}$.

Furthermore, we do not have to assume the rank $k$ is fixed and known. One can instead determine $k$ separately for each block at each level to be the number of singular values above some tolerance $\epsilon$. In the entry evaluation context this can be achieved by computing a column pivoted QR factorization or truncated SVD of the corresponding block of $\bm{A}$. In the matvec setting, randomized error estimators have been developed to adaptively select $k$~\cite[Algorithm 4.2]{HalkoMartinssonTropp:2011}. Selecting block ranks adaptively in this way leads to basis matrices $\bm{U}_i \in \R^{(k_1 + k_2) \times k}$, where $k_1$ and $k_2$ are the ranks of the children of the current block in the row index tree. An analogous claim holds for $\bm{V}_i$ involving the ranks of children in the column index tree. The block sizes of $\bm{U}$ and $\bm{V}$ in the recursive format are thus given by summing adjacent pairs of ranks from the previous level. The accompanying software provides the option to specify either a fixed rank $k$ or a tolerance $\epsilon$.

To demonstrate the applicability of our algorithms to practical problems, we start by computing a recursive butterfly factorization of the nonuniform discrete Fourier transform (NUDFT) matrix $\bm{A} \in \C^{M \times N}$ with entries $\bm{A}_{pq} = e^{i\omega_q x_p}$ with $x_p \sim \text{Unif}(0,2\pi)$ for $p = 1, \dots, M = N/2$ and $\omega_q = q-1$ for $q = 1, \dots, N$. For each $N$, we take the level $L$ to be the largest even integer such that $2^L \leq \min(M,N)$. We then define trees on the row and columns index sets (with the $\{x_q\}$ sorted), each with depth $L$ and leaves of approximately equal sizes. We use these trees for both algorithms. For each basis $\bm{U}_i$ and $\bm{V}_i$, we retain the singular vectors corresponding to singular values greater than tolerance $\epsilon = 10^{-4}$. In the matvec algorithm, we take the sketch matrices $\bm{\Omega}^{(\ell)}_i$ and $\bm{\Psi}^{(\ell)}_i$ at all levels to have 40 columns, and employ an analysis-based nonuniform fast Fourier transform (NUFFT)~\cite{barnett2019parallel} which computes matvecs with $\bm{A}$ in $O(M + N\log(N))$ time for $\max x_q = O(1)$ and $\max \omega_q = O(N)$. The runtimes of both algorithms on a 3.2 GHz Apple M1 Pro CPU, as well as the final factorization sizes in memory are shown in \cref{fig:nufft}. We observe the expected $O(N^2)$ and $O(N^{3/2})$ scaling in the runtimes of \cref{alg:butterfly_greedy_meta} and \cref{alg:butterfly_greedy_matvec} respectively, as well as the $O(N\log(N))$ size of the butterfly factorization in memory. The \texttt{FINUFFT} library~\cite{barnett2019parallel} is highly optimized, and therefore in \cref{fig:nufft-runtime} we note that sketching is generally less computationally intensive than factorizing the matrix. In order to verify the accuracy of the factorizations, we sample a random matrix $\bm{\Omega} \sim \gaussian(N, 10)$ and compute $\|\bm{A}\bm{\Omega} - \bm{B}\bm{\Omega}\|_\F / \|\bm{A}\bm{\Omega}\|_\F$, which was at most $1.5 \times 10^{-4}$ for all tested matrices.

Finally, we perform the same experiments for a Hankel transform matrix $\bm{A} \in \R^{M \times N}$ with entries $\bm{A}_{pq} = J_0(\omega_q x_p)$, where $J_0$ is the Bessel function of the first kind of order zero. We take $x_p = (p-1)^2/(M-1)^2$ for $p = 1, \dots, M = 2N$ and $\omega_q = q-1$ for $q = 1, \dots, N$ as before. As above, we form approximately dyadic trees on the row and column index sets of depth $L$, for $2^L \leq \min(M,N)$. Due to the rank deficiency of $\bm{A}$, we require sketches $\bm{\Omega}^{(\ell)}_i$ and $\bm{\Psi}^{(\ell)}_i$ with only 20 columns. As before we use tolerance $\epsilon = 10^{-4}$ when computing basis matrices. We then use an analysis-based nonuniform fast Hankel transform (NUFHT)~\cite{beckman2026nonuniform} to compute matvecs with $\bm{A}$ in quasilinear time in $N$ and $M$. \cref{fig:nufht} shows the resulting runtimes and factorization sizes, which again agree with the asymptotic analysis. However, because of the relative difficulty of the Hankel transform compared to the Fourier transform and the fact that the \texttt{FastHankelTransform.jl} library~\cite{beckman2026nonuniform} is less optimized than \texttt{FINUFFT}, sketching takes up significantly more computational effort than recovering the factorization from the sketches. In such settings where matvecs are relatively expensive, minimizing the number of sketches required to recover a factorization of a given accuracy $\epsilon$ becomes a high priority in practice. The estimated error $\|\bm{A}\bm{\Omega} - \bm{B}\bm{\Omega}\|_\F / \|\bm{A}\bm{\Omega}\|_\F$ for $\bm{\Omega} \in \gaussian(N, 10)$ was at most $1.8 \times 10^{-4}$ for all tested matrices.

\begin{figure}
    \centering
    \begin{subfigure}{0.28\textwidth}
        \centering
        \includegraphics[width=\linewidth, trim={0 -15cm 0 0}]{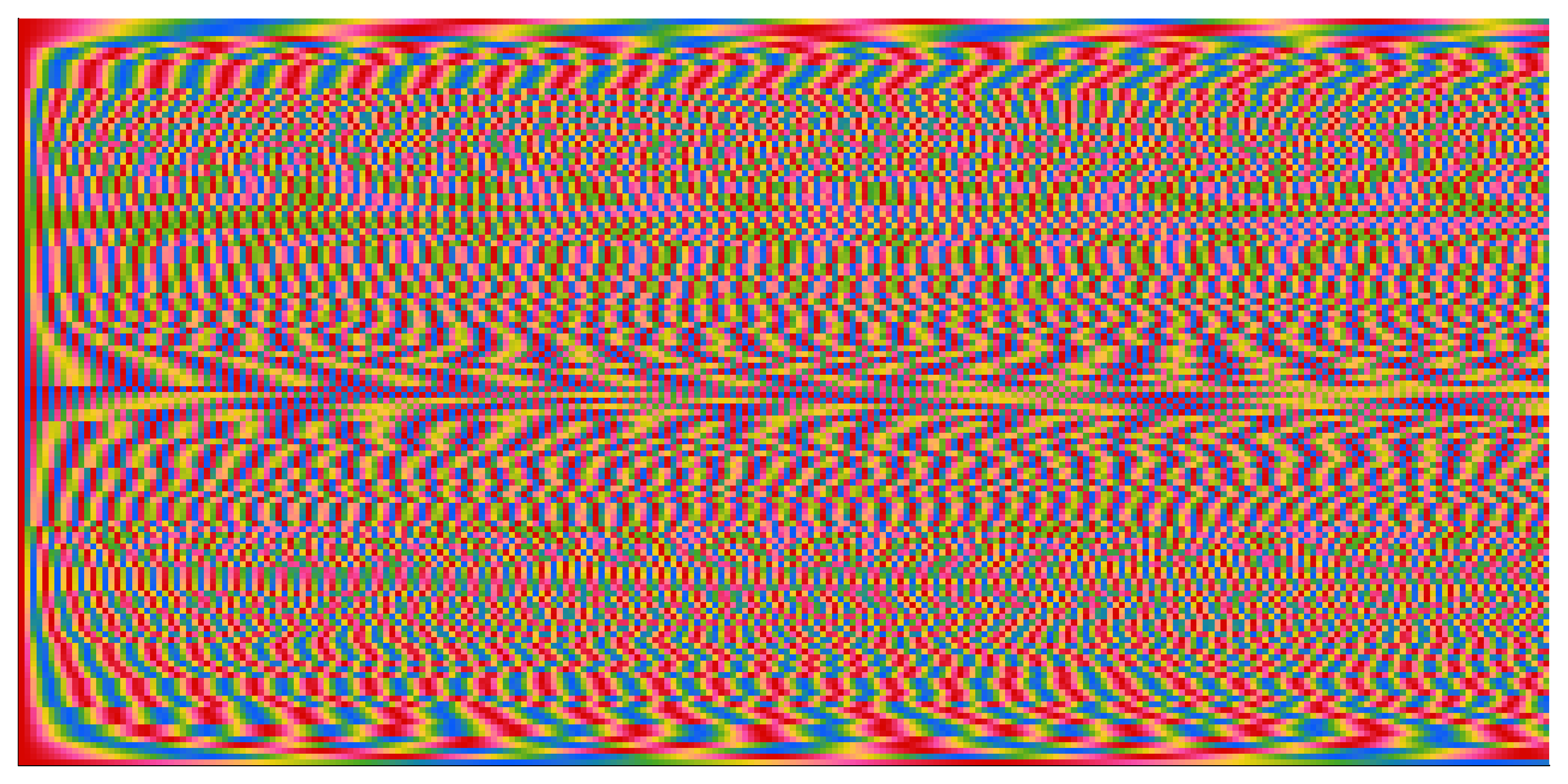}
        \caption{Real part of NUDFT matrix}
        \label{fig:nufft-matrix}
    \end{subfigure}
    \hfill
    \begin{subfigure}{0.35\textwidth}
        \centering
        \includegraphics[width=0.8\linewidth, trim={0 0.5cm 0 0}, clip]{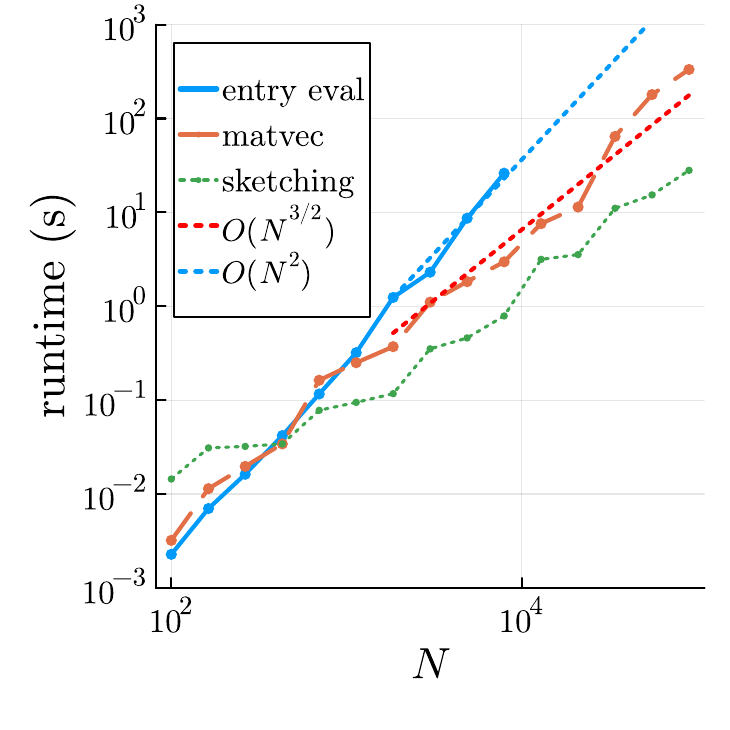}
        \caption{Runtime vs. $N$}
        \label{fig:nufft-runtime}
    \end{subfigure}
    \hfill
    \begin{subfigure}{0.35\textwidth}
        \centering
        \includegraphics[width=0.8\linewidth, trim={0 0.5cm 0 0}, clip]{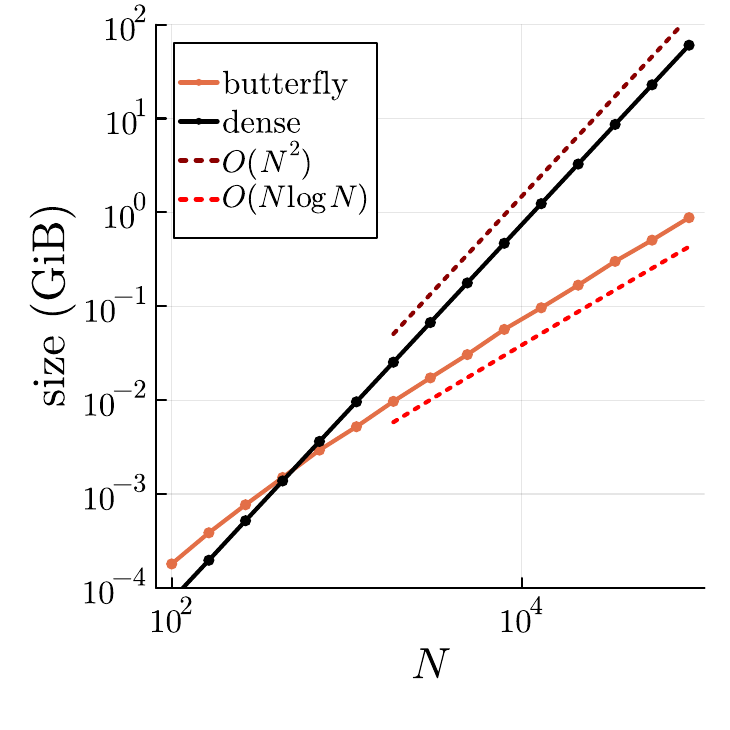}
        \caption{Factorization size vs. $N$}
        \label{fig:nufft-memory}
    \end{subfigure}
    \caption{Performance of \cref{alg:butterfly_greedy_meta} (labeled ``entry eval'') and \cref{alg:butterfly_greedy_matvec} (labeled ``matvec'') for NUDFT matrices using tolerance $\epsilon = 10^{-4}$ and 40 sketch vectors to recover each basis. (\textbf{a}) Example NUDFT matrix with $N = 256$. (\textbf{b}) Log-log plot of runtime of both algorithms as well as the time required to compute the sketches using the NUFFT. (\textbf{c}) Log-log plot of size of the butterfly factorization recovered by \cref{alg:butterfly_greedy_matvec} in memory compared to dense matrix.}
    \label{fig:nufft}
\end{figure}

\begin{figure}
    \centering
    \begin{subfigure}{0.28\textwidth}
        \centering
        \includegraphics[width=0.5\linewidth, trim={0 0cm 0 0}]{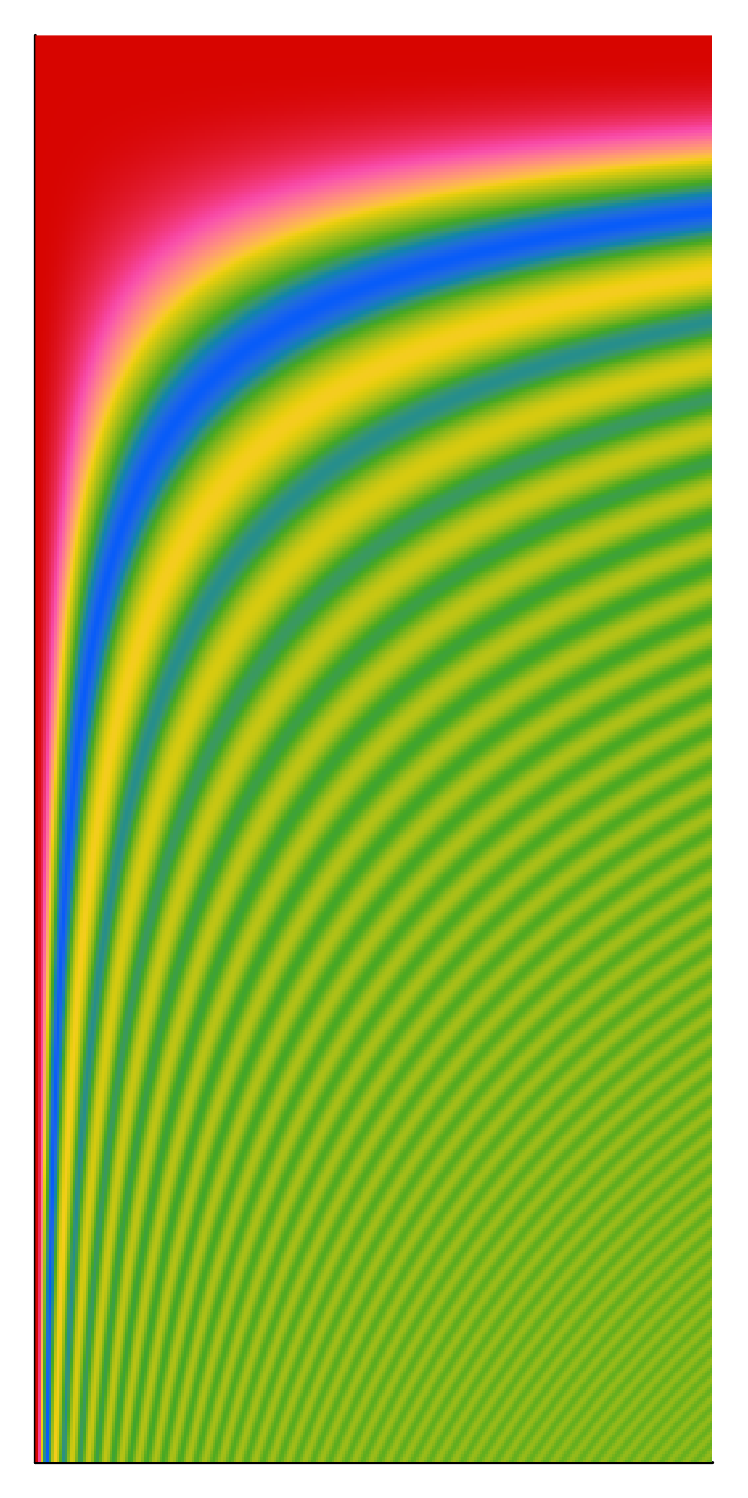}
        \caption{Hankel transform matrix}
        \label{fig:hankel-matrix}
    \end{subfigure}
    \hfill
    \begin{subfigure}{0.35\textwidth}
        \centering
        \includegraphics[width=0.8\linewidth, trim={0 0.5cm 0 0}, clip]{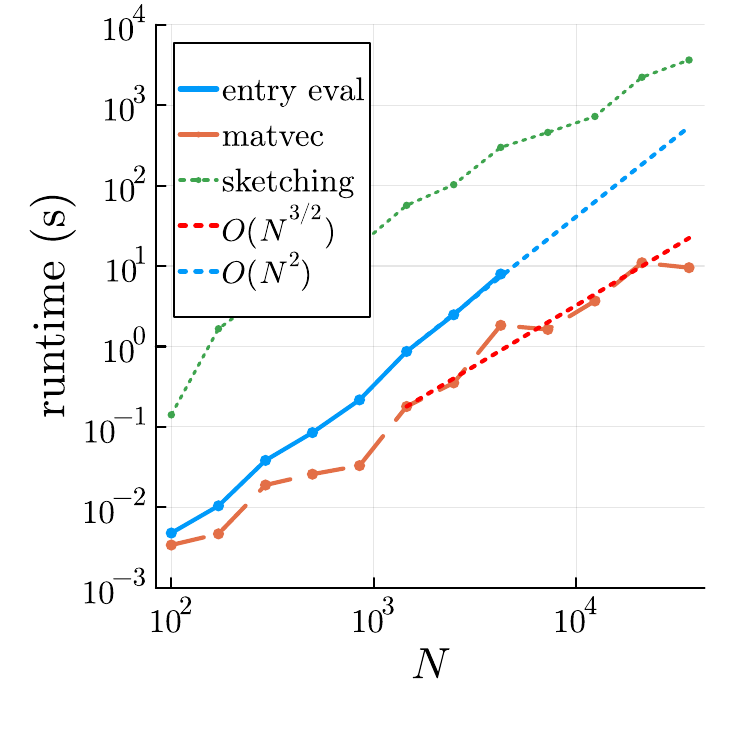}
        \caption{Runtime vs. $N$}
        \label{fig:hankel-runtime-N}
    \end{subfigure}
    \hfill
    \begin{subfigure}{0.35\textwidth}
        \centering
        \includegraphics[width=0.8\linewidth, trim={0 0.5cm 0 0}, clip]{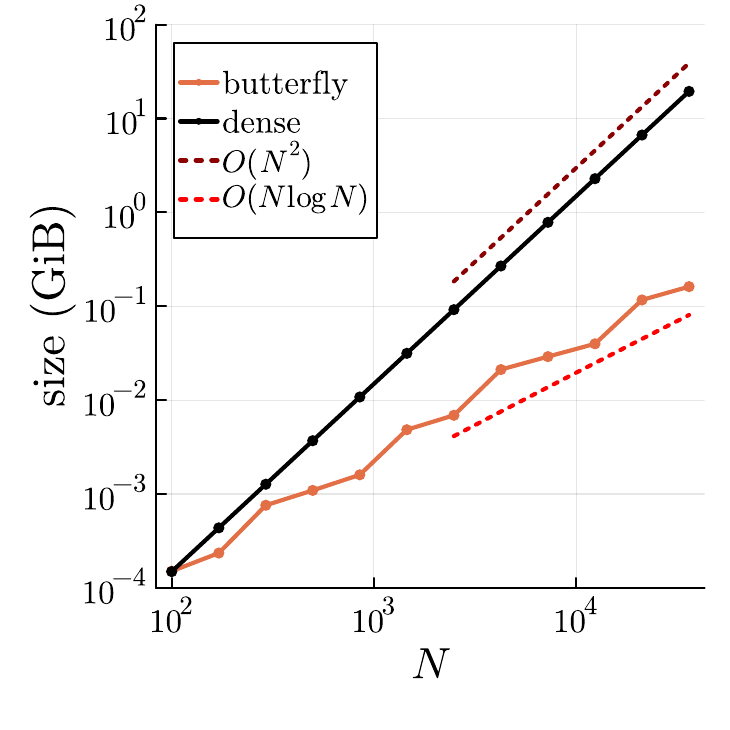}
        \caption{Factorization size vs. $N$}
        \label{fig:hankel-size-N}
    \end{subfigure}
    \caption{Performance of \cref{alg:butterfly_greedy_meta} (labeled ``entry eval'') and \cref{alg:butterfly_greedy_matvec} (labeled ``matvec'') for Hankel transform matrices using tolerance $\epsilon = 10^{-4}$ and 20 sketch vectors to recover each basis. (\textbf{a}) Example Hankel transform matrix with $N = 256$. (\textbf{b}) Log-log plot of runtime of both algorithms as well as the time required to compute the sketches using the NUFHT. (\textbf{c}) Log-log plot of size of the butterfly factorization recovered by \cref{alg:butterfly_greedy_matvec} in memory compared to dense matrix.}
    \label{fig:nufht}
\end{figure}

%% file: conclusion.tex
\section{Conclusion}

By formalizing the recursive structure of butterfly matrices suggested in~\cite{Pang2020}, we provide an easy-to-implement reformulation of the class of butterfly matrices and prove that it is equivalent to the complementary low-rank property employed in existing constructions. We use this recursive format to prove a near-optimality guarantee for an entry evaluation algorithm which matches the results of~\cite{Le2025}, and prove a novel near-optimality guarantee for the matvec query algorithm proposed in~\cite{LiuXingGuo:2021}. In addition, we demonstrate the effectiveness of the proposed algorithms in realistic applications.

%% file: ai.tex
\section*{Declaration of generative AI and AI-assisted technologies in the manuscript preparation process.}

During the preparation of this work the authors used a Large Language Model in order to assist with generating figures, editing, and searching for typos.
After using this tool, the authors reviewed and edited the content as needed and take full responsibility for the content of the published article.

%% file: recursive.tex
\section{Proof of \Cref{theorem:recursive}}\label{section:recursive}
This section is dedicated to the proof of \Cref{theorem:recursive}.  
To begin, we first prove that any $\bm{B}\in \butterfly(L,k)$ admits a factorization of the form $\bm{B} = \bm{U} \bm{X} \bm{V}^\T$, where $\bm{U}$ and $\bm{V}$ are blockdiagonal. 

\begin{lemma}\label[lemma]{lemma:highestlevel}
    Let $\bm{B} \in \butterfly(L,k)$. Then $\bm{B}$ admits a factorization of the form $\bm{B} = \bm{U} \bm{X} \bm{V}^\T$ where 
        \begin{align*}
                \bm{U} &= \blockdiag(\bm{U}_1,\ldots,\bm{U}_{2^L}), \quad \bm{U}_i \in \mathbb{R}^{2k \times k} \text{ has orthonormal columns},\\
            \bm{V} &= \blockdiag(\bm{V}_1,\ldots,\bm{V}_{2^L}), \quad \bm{V}_i \in \mathbb{R}^{2k \times k} \text{ has orthonormal columns},
            \end{align*}
\end{lemma}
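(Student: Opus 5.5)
The plan is to use only the complementary low-rank property at the two extreme levels $\ell = L$ and $\ell = 0$ of \Cref{def:butterfly}. With $\ell = L$ and $j = 1$, the complementary pairs are $(I_{i,L}, I_{1,0})$, so for every $i = 1,\ldots,2^L$ the block row $\bm{B}(I_{i,L},:)$ of size $2k \times 2^{L+1}k$ has rank at most $k$. With $\ell = 0$ and $i = 1$, the pairs are $(I_{1,0}, I_{j,L})$, so every block column $\bm{B}(:,I_{j,L})$ has rank at most $k$. These are exactly the conditions in the equivalent characterization of $\bsep(L,k)$ in \Cref{def:bsep}. The lemma amounts to proving that equivalence in the direction we need.

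\textbf{Constructing the bases.} For each $i$, I would pick $\bm{U}_i \in \mathbb{R}^{2k \times k}$ with orthonormal columns such that $\range(\bm{B}(I_{i,L},:)) \subseteq \range(\bm{U}_i)$. This is possible because the range has dimension at most $k \le 2k$. If the rank is strictly less than $k$, I complete an orthonormal basis of the range by adding arbitrary orthonormal vectors in $\mathbb{R}^{2k}$. Then $\bm{U}_i\bm{U}_i^\T \bm{B}(I_{i,L},:) = \bm{B}(I_{i,L},:)$. Similarly, I choose $\bm{V}_j$ with orthonormal columns whose range contains the row space of $\bm{B}(:,I_{j,L})$, so that $\bm{B}(:,I_{j,L})\bm{V}_j\bm{V}_j^\T = \bm{B}(:,I_{j,L})$.

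\textbf{Assembling the factorization.} Set $\bm{U} = \blockdiag(\bm{U}_1,\ldots,\bm{U}_{2^L})$ and $\bm{V}$ likewise. The index sets $I_{i,L}$ partition $[2^{L+1}k]$ into consecutive blocks of size $2k$, and $\bm{U}\bm{U}^\T = \blockdiag(\bm{U}_i\bm{U}_i^\T)$. Applying this block by block gives $\bm{U}\bm{U}^\T\bm{B} = \bm{B}$ and $\bm{B}\bm{V}\bm{V}^\T = \bm{B}$. Hence $\bm{B} = \bm{U}\bm{U}^\T\bm{B}\bm{V}\bm{V}^\T = \bm{U}\bm{X}\bm{V}^\T$ with $\bm{X} := \bm{U}^\T\bm{B}\bm{V} \in \mathbb{R}^{2^Lk \times 2^Lk}$.

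The only point needing care is the case where a block has rank strictly less than $k$, which requires the padding step described above. Otherwise the argument is direct bookkeeping, so I do not expect a real obstacle. The harder work in \Cref{theorem:recursive} is showing that the blocks of $\bm{X}$ are butterfly, which is not part of this lemma.
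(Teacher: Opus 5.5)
Your proposal is correct and follows the paper's proof essentially step for step. You use the extreme-level complementary pairs $(I_{i,L},I_{1,0})$ and $(I_{1,0},I_{j,L})$ to obtain $\bm{U}\bm{U}^\T\bm{B}=\bm{B}$ and $\bm{B}\bm{V}\bm{V}^\T=\bm{B}$, and then set $\bm{X}=\bm{U}^\T\bm{B}\bm{V}$. Your explicit padding step for blocks of rank strictly less than $k$ is a detail the paper leaves implicit.
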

\begin{remark}
    The result remains true if $\bm{B} \in \bsep(L,k)$.
\end{remark}
\begin{proof}
    For $i = 1,\ldots,2^L$, let $I_{i,L}$ be the leaf nodes of a perfect binary partition tree $\mathcal{T} = \{I_{i,L}\}$ (\Cref{def:index_sets}). Then, by \Cref{def:butterfly} and noting that $I_{1,0} = [2^{L+1}k]$ we have $\rank(\bm{B}(I_{i,L},I_{1,0})) = \rank(\bm{B}(I_{i,L},:)) \leq k$. Hence, $\bm{B}(I_{i,L},:) = \bm{U}_i \bm{U}_i^\T \bm{B}(I_{i,L},:)$ for some  matrix $\bm{U}_{i} \in \mathbb{R}^{2k \times k}$ with orthonormal columns. We stack these matrices into a block-diagonal matrix to define $\bm{U} := \blockdiag(\bm{U}_1,\ldots,\bm{U}_{2^L})$. Then, we can write
    \begin{equation}\label{eq:decomp1}
        \bm{B} = \bm{U}\bm{U}^\T \bm{B}.
    \end{equation}
    By the analogous argument for $\bm{B} (:, I_{j, L})$ for $j = 1, \dots, 2^L$, there exist matrices $\bm{V}_j \in \mathbb{R}^{2k \times k}$  whose columns form an orthonormal basis  so that given $\bm{V} := \blockdiag(\bm{V}_1,\ldots,\bm{V}_{2^L})$, we can write
    \begin{equation}\label{eq:decomp2}
        \bm{B} = \bm{B}\bm{V}\bm{V}^\T .
    \end{equation}
    Combining \eqref{eq:decomp1} and \eqref{eq:decomp2} yields
    \begin{equation*}
        \bm{B} = \bm{U}\bm{U}^\T \bm{B} = \bm{U}\bm{U}^\T \bm{B}\bm{V}\bm{V}^\T = \bm{U} \bm{X} \bm{V}^\T,
    \end{equation*}
    where $\bm{X} = \bm{U}^\T \bm{B}\bm{V}$, as required.
\end{proof}
It now remains to show that if
\begin{equation*}
    \bm{X} = \begin{bmatrix} \bm{X}_{1,1} & \bm{X}_{1,2} \\ 
                \bm{X}_{2,1} & \bm{X}_{2,2} \end{bmatrix},
\end{equation*}
then $\bm{X}_{s,t} \in \butterfly(L-2,k)$ for $s,t = 1,2$.
To do this, we must introduce two operations on the nodes of a binary partition tree. 
Assume $L$ is even and let $\mathcal{T} = \{I_{i,L}\}$ be a level $L$ perfect dyadic binary partition tree of $[2^{L+1}k]$ (\Cref{def:index_sets}). Define 
\begin{equation*}
    \domain(\leftfold) := \{I_{i,\ell} \in \mathcal{T} : \ell = 1,\ldots,L-1, i = 1,\ldots,2^{\ell}\} \subset \mathcal{T}
\end{equation*}
and the following operations
\begin{align*}
    \leftfold: \domain(\leftfold) \to \range(\leftfold) \subset \mathcal{T}, &\quad \leftfold(I_{i,\ell}) = \begin{cases} I_{i,\ell} \text{ if } i \leq 2^{\ell-1},\\
    I_{i-2^{\ell-1},\ell} \text{ otherwise}, \end{cases}\\
    \side: \domain(\leftfold) \to \{1,2\}, &\quad \side(I_{i,\ell}) = \begin{cases} 1 \text{ if } i \leq 2^{\ell-1},\\
    2 \text{ otherwise.} \end{cases}
\end{align*}
The $\leftfold$ operation is defined on all non-leaf and non-root nodes of $\mathcal{T}$. 
Intuitively, $\leftfold$ collapses the left and right halves of each level of the tree onto the left half, while $\side$ records from which half the node originated. 
Consequently, every node $I_{i,\ell} \in \domain(\leftfold)$ can be represented uniquely by the pair $(\leftfold(I_{i,\ell}),\side(I_{i,\ell}))$. 
\Cref{fig:treeops} provides a visual explanation for these operations.

\begin{figure}
    \centering
    \input{leftfold}
    \caption{Visualization of $\leftfold$ and $\side$ operations for $\ell = 3$.}
    \label{fig:treeops}
\end{figure}

The reason for introducing $\leftfold$ and $\side$ is that they formalize the bookkeeping behind the recursive structure illustrated in \Cref{fig:recursive_def}.
By \Cref{lemma:highestlevel}, we can write any $\bm{B} \in \butterfly(L,k)$ as $\bm{B} = \bm{U} \bm{X} \bm{V}^\T$, where $\bm{U}$ and $\bm{V}$ are blockdiagonal matrices with orthonormal columns. 
Partition $\bm{U} = \blockdiag(\bm{U}_{[1]},\bm{U}_{[2]})$ and $\bm{V} = \blockdiag(\bm{V}_{[1]},\bm{V}_{[2]})$, where the split is exactly in half. 
Then for any complementary pair $(I_{i,\ell},I_{j,L-\ell})$, if 
\begin{equation*}
    s = \side(I_{i,\ell}), \quad t = \side(I_{j,L-\ell}),
\end{equation*}
and
\begin{equation*}
    I_{i',\ell} = \leftfold(I_{i,\ell}), \quad I_{j',L-\ell} = \leftfold(I_{j,L-\ell}),
\end{equation*}
we have
\begin{equation*}
    \bm{B}(I_{i,\ell},I_{j,L-\ell}) = \bm{U}_{[s]}(I_{i',\ell},:) \bm{X} \bm{V}_{[t]}(I_{j',L-\ell},:)^\T.
\end{equation*}
The key observation is that after folding the row and column nodes onto the left half of the tree, the complementary block $\bm{B}(I_{i,\ell}, I_{j,L-\ell})$ depends only on the block $\bm{X}_{s,t}$ of $\bm{X}$, where $s = \side(I_{i,\ell})$ and $t = \side(I_{j,L-\ell})$. 
Consequently, every complementary pair in $\domain(\leftfold)$ is associated with exactly one of the four matrices $\bm{X}_{1,1}, \bm{X}_{1,2}, \bm{X}_{2,1},\bm{X}_{2,2}$, as illustrated in \Cref{fig:recursive_def}.

To prove that $\bm{X}_{s,t} \in \butterfly(L-2,k)$, we must also identify the corresponding complementary pair in a binary partition tree $\mathcal{T}'$ with $L-2$ levels.
To make this precise, we introduce another operation. 
Let $\mathcal{T}'$ be a level $L-2$ perfect dyadic binary partition tree of $[2^{L-1}k]$ with nodes $I'_{i,\ell}$. 
Define
\begin{equation*}
    \halving: \range(\leftfold) \to \mathcal{T}', \quad \halving(I_{i,\ell}) = I'_{i,\ell-1},
\end{equation*}
Intuitively, after folding to the left, the resulting indices can be relabeled as nodes of a tree that is two levels shallower. 
The map $\halving$ performs exactly this relabeling.

Consequently, every node $I_{i,\ell} \in \domain(\leftfold)$ is uniquely identified by the tuple
\begin{equation*}
    (\halving(\leftfold(I_{i,\ell})), \side(I_{i,\ell})).
\end{equation*}
Conversely, every node of the smaller tree $\mathcal{T}'$ together with a side label $s \in \{1,2\}$ determines a unique node in $\mathcal{T}$.
Thus, the maps $\leftfold$, $\side$, and $\halving$ identify a one-to-one correspondence between nodes in $\domain(\leftfold)$ and nodes of $\mathcal{T}'$ together with a side label.

The next lemma is a bookkeeping result. 
Its purpose is to relate the block-diagonal factors $\bm{U}$ and $\bm{V}$ to this correspondence between $\domain(\leftfold)$ and $\mathcal{T}'$.

\begin{lemma}\label[lemma]{lemma:indices}
    Consider
    \begin{equation*}
        \bm{W} = \blockdiag(\bm{W}_1,\ldots,\bm{W}_{2^L}) = \blockdiag(\bm{W}_{[1]},\bm{W}_{[2]}) \in \mathbb{R}^{2^{L+1}k \times 2^L k},
    \end{equation*}
    where $\bm{W}_{1},\ldots,\bm{W}_{2^{L}} \in \mathbb{R}^{2k \times k}$ and $\bm{W}_{[1]},\bm{W}_{[2]} \in \mathbb{R}^{2^{L}k \times 2^{L-1}k}$. For any $I_{i,\ell} \in \domain(\leftfold)$ define
    \begin{equation*}
        I_{i',\ell} = \leftfold(I_{i,\ell}), \quad s = \side(I_{i,\ell}), \quad I'_{i',\ell-1} = \halving(I_{i',\ell}).
    \end{equation*}
    Then, 
    \begin{equation*}
        \bm{W}(I_{i,\ell},:) = \bm{W}_{[s]}(I_{i',\ell},I'_{i',\ell-1}) \bm{I}_{2^{L-1}k}(:,I'_{i',\ell-1})^\T \bm{D}_s,
    \end{equation*}
    where 
    \begin{equation*}
        \bm{D}_s = \begin{cases} \begin{bmatrix} \bm{I}_{2^{L-1}k} & \bm{0}_{2^{L-1}k \times 2^{L-1}k} \end{bmatrix} \text{ if } s = 1, \\ \\
        \begin{bmatrix}  \bm{0}_{2^{L-1}k \times 2^{L-1}k} & \bm{I}_{2^{L-1}k} \end{bmatrix} \text{otherwise.}
        \end{cases}
    \end{equation*}
    Furthermore, if $\bm{W}$ has orthonormal columns, then so has $\bm{W}_{[s]}(I_{i',\ell},I'_{i',\ell-1})$.
\end{lemma}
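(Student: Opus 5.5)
The proof is pure index bookkeeping, so the plan is to write out explicitly which rows and columns of $\bm{W}$ the set $I_{i,\ell}$ touches. Recall $I_{i,\ell} = [(i-1)2^{L+1-\ell}k+1;\, i2^{L+1-\ell}k]$. For $\ell \geq 1$ this set is a union of $2^{L-\ell}$ consecutive leaves $I_{j,L}$, namely those with $j \in [(i-1)2^{L-\ell}+1;\, i2^{L-\ell}]$. Since $\bm{W}$ is block diagonal with $2k\times k$ blocks, $\bm{W}(I_{i,\ell},:)$ is nonzero only in the column range $J := [(i-1)2^{L-\ell}k+1;\, i2^{L-\ell}k]$. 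On $I_{i,\ell}\times J$ it equals $\blockdiag(\bm{W}_j)$ over those $j$. The whole argument consists of checking that this nonzero block, and its location, are exactly what the right-hand side produces.

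First I treat $s=1$, i.e. $i\le 2^{\ell-1}$, so $i'=i$. Then $I_{i,\ell}\subseteq[2^Lk]$, which is the row range of $\bm{W}_{[1]}$, and $\bm{W}(I_{i,\ell},:) = [\,\bm{W}_{[1]}(I_{i,\ell},:)\ \ \bm{0}\,]$. Next I compare $J$ with $I'_{i,\ell-1}$ in the tree $\mathcal{T}'$ on $[2^{L-1}k]$ with $L-2$ levels. By \Cref{def:index_sets} applied with $L-2$ in place of $L$,
\[
I'_{i,\ell-1} = [(i-1)2^{(L-2)+1-(\ell-1)}k+1;\, i2^{L-\ell}k] = J .
\]
So $\bm{W}_{[1]}(I_{i,\ell},:) = \bm{W}_{[1]}(I_{i,\ell},I'_{i,\ell-1})\,\bm{I}_{2^{L-1}k}(:,I'_{i,\ell-1})^\T$, because the columns outside $J$ vanish. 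Multiplying by $\bm{D}_1 = [\bm{I}\ \bm{0}]$ puts the block into the first half of the columns, which gives the claim for $s=1$.

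For $s=2$, write $i = i' + 2^{\ell-1}$. The rows of $I_{i,\ell}$ are then shifted by $2^{\ell-1}\cdot 2^{L+1-\ell}k = 2^Lk$ relative to $I_{i',\ell}$, so they correspond to rows $I_{i',\ell}$ of $\bm{W}_{[2]}$. Likewise, the columns of $\bm{W}$ are shifted by $2^{\ell-1}\cdot 2^{L-\ell}k = 2^{L-1}k$, so they correspond to columns of $\bm{W}_{[2]}$ indexed by $I'_{i',\ell-1}$. Hence $\bm{W}(I_{i,\ell},:) = [\,\bm{0}\ \ \bm{W}_{[2]}(I_{i',\ell},:)\,]$, and the same zero-column argument together with $\bm{D}_2=[\bm{0}\ \bm{I}]$ finishes this case.

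For the orthonormality claim, $\bm{W}_{[s]}(I_{i',\ell},I'_{i',\ell-1})$ is exactly $\blockdiag(\bm{W}_j)$ over the $2^{L-\ell}$ leaves contained in $I_{i,\ell}$. Each $\bm{W}_j$ has orthonormal columns, so the block-diagonal matrix does as well. An alternative route is to note that the columns of $\bm{W}(I_{i,\ell},:)$ in $J$ are full columns of $\bm{W}$, because their support lies inside $I_{i,\ell}$, and are therefore orthonormal.

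The main obstacle is the offset arithmetic in the $s=2$ case, where the row and column shifts must be verified to line up with the halved tree $\mathcal{T}'$. The conceptual content is otherwise trivial.
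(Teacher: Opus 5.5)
Your proof is correct and follows essentially the same route as the paper: you locate the nonzero column support of $\bm{W}(I_{i,\ell},:)$, identify it with $I'_{i',\ell-1}$ (after the $2^{L-1}k$ shift when $s=2$), factor through $\bm{I}_{2^{L-1}k}(:,I'_{i',\ell-1})^\T \bm{D}_s$, and get orthonormality from the block-diagonal structure. The only difference is cosmetic. The paper first writes $\bm{W}(I_{i,\ell},:) = \bm{W}_{[s]}(I_{i',\ell},:)\bm{D}_s$ and then locates the support inside $\bm{W}_{[s]}$ uniformly in $s$, whereas you do the offset arithmetic directly in $\bm{W}$, one case at a time.
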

\begin{proof}
    Note that
    \begin{align*}
        \bm{W}(I_{i,\ell},:) &=\begin{cases} \begin{bmatrix} \bm{W}_{[1]}(I_{i,\ell},:) & \bm{0}_{|I_{i,\ell}| \times 2^{L-1}k} \end{bmatrix} \text{ if } s = 1,\\\\
        \begin{bmatrix} \bm{0}_{|I_{i,\ell}| \times 2^{L-1}k } & \bm{W}_{[2]}(I_{i-2^{\ell-1},\ell},:) \end{bmatrix} \text{ otherwise.}
        \end{cases} \\
        &= \bm{W}_{[s]}(I_{i',\ell},:) \bm{D}_s.
    \end{align*}
    Let $\bm{W}_{[s]} = \blockdiag(\bm{W}_{[s],1},\ldots,\bm{W}_{[s],2^{L-1}})$ be the blockdiagonal structure of $\bm{W}_{[s]}$, where each diagonal block is in $\mathbb{R}^{2k \times k}$. 
    Note that
    \[
        \bm{W}_{[s]}(I_{i',\ell},I'_{i',\ell-1})=\blockdiag(\bm{W}_{[s],(i'-1)2^{L-\ell} + 1}, \ldots, \bm{W}_{[s],i'2^{L-\ell}}),
    \]
    using that $I'_{i,\ell-1} = [(i'-1)2^{L-\ell}k+1;i'2^{L-\ell}k]$ (see \Cref{def:index_sets}). Now note that
    \begin{align*}
        \bm{W}_{[s]}(I_{i',\ell},:) &= \begin{bmatrix} \bm{0}_{|I_{i',\ell}| \times (i'-1)2^{L-\ell} k } &
         \bm{W}_{[s]}(I_{i',\ell},I'_{i',\ell-1}) & 
        \bm{0}_{|I_{i',\ell}| \times (2^{L-1} - i'2^{L-\ell})k} \end{bmatrix}\\
        &=  \bm{W}_{[s]}(I_{i',\ell},I'_{i',\ell-1}) \begin{bmatrix} \bm{0}_{2^{L-\ell}k \times (i'-1)2^{L-\ell}k} & \bm{I}_{2^{L-\ell} k} & \bm{0}_{2^{L-\ell}k \times (2^{L-1} - i'2^{L-\ell})k} \end{bmatrix}\\
        &= \bm{W}_{[s]}(I_{i',\ell},I'_{i',\ell-1}) \bm{I}_{2^{L-1}k}(:,I'_{i',\ell-1})^\T,
    \end{align*}
    Hence,
    \begin{equation*}
        \bm{W}(I_{i,\ell},:) = \bm{W}_{[s]}(I_{i',\ell},I'_{i',\ell-1}) \bm{I}_{2^{L-1}k}(:,I'_{i',\ell-1})^\T \bm{D}_s.
    \end{equation*}
    The statement about orthonormal columns is immediate from 
    \begin{equation*}
        \bm{W}_{[s]}(I_{i',\ell},I'_{i',\ell-1}) =\blockdiag(\bm{W}_{[s],(i'-1)2^{L-\ell} + 1}, \ldots, \bm{W}_{[s],i'2^{L-\ell}}), 
    \end{equation*}
    and the fact that each diagonal block has orthonormal columns. 
\end{proof}
The significance of $\Cref{lemma:indices}$ is that it allows us to identify the rows and columns of the child matrix $\bm{X}_{s,t}$ associated with a complementary pair in $\mathcal{T}$.
We can therefore relate complementary blocks of $\bm{B}$ to complementary blocks of the matrices $\bm{X}_{s,t}$.
\begin{lemma}\label[lemma]{lemma:direction1}
   Let $\bm{B} \in \butterfly(L,k)$ with decomposition $\bm{B} = \bm{U} \bm{X} \bm{V}^\T$ as in \Cref{lemma:highestlevel}. Partition $\bm{X}$ into a $2 \times 2$ block-matrix 
    \begin{equation*}
        \bm{X} = \begin{bmatrix} \bm{X}_{1,1} & \bm{X}_{1,2} \\
        \bm{X}_{2,1} & \bm{X}_{2,2}
        \end{bmatrix}, 
        \quad \bm{X}_{s,t} \in \mathbb{R}^{2^{L-1}k \times 2^{L-1}k}.
    \end{equation*}
    Partition $\bm{U} = \blockdiag(\bm{U}_{[1]},\bm{U}_{[2]})$ where $\bm{U}_{[s]} \in \mathbb{R}^{2^{L}k \times 2^{L-1} k}$ and partition $\bm{V} = \blockdiag(\bm{V}_{[1]},\bm{V}_{[2]})$ analogously. 
    
    Assume $L$ is even and let $\mathcal{T}$ be a level $L$ perfect dyadic binary partition tree of $[2^{L+1}k]$. Let $\mathcal{T}'$ be a level $L-2$ perfect dyadic binary partition tree of $[2^{L-1}k]$. Denote by $I_{i,\ell}$ and $I'_{i,\ell}$ the $i^{\text{th}}$ node at level $\ell$ of $\mathcal{T}$ and $\mathcal{T}'$ respectively. 
    
    For any complementary pairs $I_{i,\ell}, I_{j,L-\ell} \in \domain(\leftfold) \subset \mathcal{T}$, define
    \begin{align*}
        I_{i',\ell} &= \leftfold(I_{i,\ell}), \quad &&s = \side(I_{i,\ell}), \quad &&&I'_{i',\ell-1} = \halving(I_{i',\ell}),\\
        I_{j',L-\ell} &= \leftfold(I_{j,L-\ell}), \quad &&t = \side(I_{j,L-\ell}), \quad &&&I'_{j',L-\ell-1}= \halving(I_{j',L-\ell}).
    \end{align*}
    Then, we have
    \begin{equation*}
        \bm{B}(I_{i,\ell},I_{j,L-\ell}) = \bm{U}_{[s]}(I_{i',\ell},I'_{i',\ell-1}) \bm{X}_{s,t}(I'_{i',\ell-1},I'_{j',L-\ell-1}) (\bm{V}_{[t]}(I_{j',L-\ell},I'_{j',L-\ell-1}))^\T.
    \end{equation*}
\end{lemma}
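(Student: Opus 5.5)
The plan is to write the complementary block as a product of row selections of $\bm{U}$ and $\bm{V}$ around $\bm{X}$, and then apply \Cref{lemma:indices} to both sides. Since $\bm{B} = \bm{U}\bm{X}\bm{V}^\T$, we have
\begin{equation*}
    \bm{B}(I_{i,\ell},I_{j,L-\ell}) = \bm{U}(I_{i,\ell},:)\,\bm{X}\,\bm{V}(I_{j,L-\ell},:)^\T.
\end{equation*}

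\textbf{Step 1: factor the rows of $\bm{U}$.} Apply \Cref{lemma:indices} with $\bm{W} = \bm{U}$, using the node $I_{i,\ell}\in\domain(\leftfold)$. This gives
\begin{equation*}
    \bm{U}(I_{i,\ell},:) = \bm{U}_{[s]}(I_{i',\ell},I'_{i',\ell-1})\,\bm{I}_{2^{L-1}k}(:,I'_{i',\ell-1})^\T\bm{D}_s.
\end{equation*}

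\textbf{Step 2: factor the rows of $\bm{V}$.} Apply \Cref{lemma:indices} again with $\bm{W}=\bm{V}$ and the node $I_{j,L-\ell}$. This is legitimate because $I_{j,L-\ell}\in\domain(\leftfold)$ by assumption. It gives
\begin{equation*}
    \bm{V}(I_{j,L-\ell},:) = \bm{V}_{[t]}(I_{j',L-\ell},I'_{j',L-\ell-1})\,\bm{I}_{2^{L-1}k}(:,I'_{j',L-\ell-1})^\T\bm{D}_t.
\end{equation*}
After transposing, the factor $\bm{D}_t^\T\bm{I}_{2^{L-1}k}(:,I'_{j',L-\ell-1})$ appears on the right of $\bm{X}$.

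\textbf{Step 3: collapse the middle factor.} Substituting both expressions, the middle of the product becomes
\begin{equation*}
    \bm{I}_{2^{L-1}k}(:,I'_{i',\ell-1})^\T\,\bm{D}_s\,\bm{X}\,\bm{D}_t^\T\,\bm{I}_{2^{L-1}k}(:,I'_{j',L-\ell-1}).
\end{equation*}
By the definition of $\bm{D}_s$ as the selector of the first or second block row of size $2^{L-1}k$, we have $\bm{D}_s\bm{X}\bm{D}_t^\T = \bm{X}_{s,t}$. Left- and right-multiplying by the transposed identity column selections then extracts the submatrix $\bm{X}_{s,t}(I'_{i',\ell-1},I'_{j',L-\ell-1})$. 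Combining the three steps yields the claimed identity.

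\textbf{Expected obstacle.} The computation itself is routine once \Cref{lemma:indices} is in hand. The only point needing care is index bookkeeping in the column factor: \Cref{lemma:indices} must be applied to $\bm{V}$ at level $L-\ell$, so the halved node lives at level $L-\ell-1$ of $\mathcal{T}'$. One should also check that the sizes of the halved index sets match the block dimensions of $\bm{X}_{s,t}\in\mathbb{R}^{2^{L-1}k\times 2^{L-1}k}$, so that every product is well defined.
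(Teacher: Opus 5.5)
Your proposal is correct and follows essentially the same route as the paper's proof: write $\bm{B}(I_{i,\ell},I_{j,L-\ell}) = \bm{U}(I_{i,\ell},:)\bm{X}\bm{V}(I_{j,L-\ell},:)^\T$ and apply \Cref{lemma:indices} to $\bm{U}$ and to $\bm{V}$. Then use $\bm{D}_s\bm{X}\bm{D}_t^\T = \bm{X}_{s,t}$, and let the identity-column selections extract the submatrix $\bm{X}_{s,t}(I'_{i',\ell-1},I'_{j',L-\ell-1})$.
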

\begin{proof}
    For notational brevity, define
    \[
        \widetilde{\bm{U}} := \bm{U}_{[s]}(I_{i',\ell},I'_{i',\ell-1})
        ,\quad
        \widetilde{\bm{V}} := \bm{V}_{[t]}(I_{j',L-\ell},I'_{j',L-\ell-1}).
    \]
    By \Cref{lemma:indices} we have
    \begin{align*}
        \bm{B}(I_{i,\ell},I_{j,L-\ell}) &= \bm{U}(I_{i,\ell},:) \bm{X} \bm{V}(I_{j,L-\ell},:)^\T \\
        &= \widetilde{\bm{U}} \bm{I}_{2^{L-1}k}(:,I'_{i',\ell-1})^\T\bm{D}_s \bm{X} \bm{D}_t^\T \bm{I}_{2^{L-1}k}(:,I'_{j',L-\ell-1}) \widetilde{\bm{V}}^\T\\
        &=\widetilde{\bm{U}} \bm{I}_{2^{L-1}k}(:,I'_{i',\ell-1})^\T\bm{X}_{s,t} \bm{I}_{2^{L-1}k}(:,I'_{j',L-\ell-1})\widetilde{\bm{V}}^\T\\
        &=\widetilde{\bm{U}} \bm{X}_{s,t}(I'_{i',\ell-1},I'_{j',L-\ell-1}) \widetilde{\bm{V}}^\T,
    \end{align*}
    as required.
\end{proof}
\Cref{lemma:direction1} formalizes the forward direction suggested by \Cref{fig:recursive_def}: every complementary low-rank block can be expressed in terms of a complementary low-rank block of exactly one of $\bm{X}_{s,t}$ for $s,t=1,2$.
To prove the equivalence of the recursive definition of butterfly, we also require the converse statement.

\begin{lemma}\label[lemma]{lemma:direction2}
    Consider the setting of \Cref{lemma:direction1}. 
    Let $(I_{i',\ell},I_{j',L-2-\ell})$ be any complementary pair in $\mathcal{T}'$, and let $s,t \in \{1,2\}$. 
    Let  $I_{i,\ell+1}$ and $I_{j,L-\ell-1}$ denote the unique nodes of $\mathcal{T}$ satisfying
    \begin{align*}
        \halving(\leftfold(I_{i,\ell+1})) &= I'_{i',\ell}, \quad &&\side(I_{i,\ell+1}) = s,\\
        \halving(\leftfold(I_{j,L-\ell-1})) &= I'_{j',L-2-\ell}, \quad &&\side(I_{j,L-\ell-1}) = t.
    \end{align*}
    Then, 
    \begin{equation*}
        \bm{X}_{s,t}(I'_{i',\ell},I'_{j',L-2-\ell}) = (\bm{U}_{[s]}(I_{i,\ell+1},I'_{i',\ell}))^\T\bm{B}(I_{i,\ell+1},I_{j,L-\ell-1}) \bm{V}_{[t]}(I_{j,L-\ell-1},I'_{j',L-2-\ell}).
    \end{equation*}
\end{lemma}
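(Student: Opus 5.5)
The plan is to invert the identity of \Cref{lemma:direction1} using the orthonormality of the block-diagonal factors. First, check that the nodes $I_{i,\ell+1}$ and $I_{j,L-\ell-1}$ are well defined. Since $(I'_{i',\ell}, I'_{j',L-2-\ell})$ is a complementary pair in $\mathcal{T}'$, we have $\ell \in \{0,\ldots,L-2\}$. Hence $\ell+1$ and $L-\ell-1$ both lie in $\{1,\ldots,L-1\}$, so the corresponding nodes belong to $\domain(\leftfold)$. The bijection between $\domain(\leftfold)$ and pairs (node of $\mathcal{T}'$, side label), described before \Cref{lemma:indices}, then gives existence and uniqueness. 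Moreover, their levels sum to $L$, so $(I_{i,\ell+1}, I_{j,L-\ell-1})$ is a complementary pair in $\mathcal{T}$. By construction, $\leftfold(I_{i,\ell+1}) = I_{i',\ell+1}$ and $\halving(I_{i',\ell+1}) = I'_{i',\ell}$, and analogously for the column node.

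Next, apply \Cref{lemma:direction1} to this pair, with level $\ell+1$ in place of $\ell$. This gives
\[
\bm{B}(I_{i,\ell+1},I_{j,L-\ell-1}) = \widetilde{\bm{U}}\, \bm{X}_{s,t}(I'_{i',\ell},I'_{j',L-2-\ell})\, \widetilde{\bm{V}}^\T,
\]
where $\widetilde{\bm{U}} = \bm{U}_{[s]}(I_{i',\ell+1},I'_{i',\ell})$ and $\widetilde{\bm{V}} = \bm{V}_{[t]}(I_{j',L-\ell-1},I'_{j',L-2-\ell})$. By the last statement of \Cref{lemma:indices}, $\widetilde{\bm{U}}$ and $\widetilde{\bm{V}}$ have orthonormal columns, because $\bm{U}$ and $\bm{V}$ do. Multiplying on the left by $\widetilde{\bm{U}}^\T$ and on the right by $\widetilde{\bm{V}}$, and using $\widetilde{\bm{U}}^\T\widetilde{\bm{U}} = \bm{I}$ and $\widetilde{\bm{V}}^\T\widetilde{\bm{V}} = \bm{I}$, isolates $\bm{X}_{s,t}(I'_{i',\ell},I'_{j',L-2-\ell})$.

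The only delicate point is indexing. The statement writes $\bm{U}_{[s]}(I_{i,\ell+1},I'_{i',\ell})$, whereas \Cref{lemma:direction1} produces $\bm{U}_{[s]}(I_{i',\ell+1},I'_{i',\ell})$. These are the same object once one notes that the row index set of $\bm{U}_{[s]}$ is the folded one. Concretely, $\bm{U}(I_{i,\ell+1},:)$ restricted to the nonzero column block is exactly $\bm{U}_{[s]}(I_{i',\ell+1},:)$, as shown in the first display of the proof of \Cref{lemma:indices}. I would state this identification explicitly, reading $I_{i,\ell+1}$ in the statement as the local row indices $I_{i',\ell+1}$ within the $s$-th half, and likewise for $\bm{V}_{[t]}$. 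With that convention fixed, the computation above is the whole proof. I expect this bookkeeping, rather than any real mathematics, to be the main obstacle.
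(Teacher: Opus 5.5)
Your proof is correct and is the same as the paper's: apply \Cref{lemma:direction1} to the complementary pair $(I_{i,\ell+1},I_{j,L-\ell-1})$, then remove $\widetilde{\bm{U}}$ and $\widetilde{\bm{V}}$ using their orthonormal columns, which \Cref{lemma:indices} provides. Your indexing remark is also right: $\bm{U}_{[s]}$ has only $2^{L}k$ rows, so the row index in $\bm{U}_{[s]}(I_{i,\ell+1},I'_{i',\ell})$ must be read as the folded set $I_{i',\ell+1}$, and likewise for $\bm{V}_{[t]}$. The paper's statement glosses over this.
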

\begin{proof}
    The statement follows by applying \Cref{lemma:direction1} to the complementary pair  $(I_{i,\ell+1}, I_{j,L-1-\ell})$ in $\mathcal{T}$ and then multiplying on the left by $\bm{U}_{[s]}(I_{i,\ell+1},I'_{i',\ell})^\T$ and on the right by $\bm{V}_{[t]}(I_{j,L-\ell-1},I'_{j',L-2-\ell})$, and using the fact that they have orthonormal columns; see \Cref{lemma:indices}. 
\end{proof}

Together, \Cref{lemma:direction1} and \Cref{lemma:direction2} make the correspondence between blocks illustrated in \Cref{fig:recursive_def} precise.
\Cref{theorem:recursive} can now be proved.

\begin{proof}[Proof of \Cref{theorem:recursive}]
     Since $L$ is even, we may write $L = 2h$ for some $h \in \mathbb{N}$. 

    $(\Rightarrow)$ Suppose that $\bm{B} \in \butterfly(L,k)$. 
    By \Cref{lemma:highestlevel}, $\bm{B}$ admits a factorization of the form
    \begin{equation*}
        \bm{B} = \bm{U} \bm{X} \bm{V}^\T,
    \end{equation*}
    where $\bm{U}$ and $\bm{V}$ are block-diagonal with $2^{L}$ blocks of size $2k \times k$ with orthonormal columns and $\bm{X} = \bm{U}^\T \bm{B} \bm{V}$.
    Partition $\bm{X}$ as a $2\times 2$ block-matrix with blocks of equal size:
    \begin{equation*}
        \bm{X} = \begin{bmatrix} \bm{X}_{1,1} & \bm{X}_{1,2} \\ \bm{X}_{2,1} & \bm{X}_{2,2} \end{bmatrix}, \quad \bm{X}_{s,t} \in \mathbb{R}^{2^{L-1} k \times 2^{L-1}k}.
    \end{equation*}
    We must now show that $\bm{X}_{s,t} \in \butterfly(L-2,k)$ for $s,t = 1,2$. Let $\mathcal{T}'$ be a perfect dyadic binary partition tree of $[2^{L-1}k]$ whose $i^{'\text{th}}$ node on level $\ell$ is denoted by $I'_{i',\ell}$ (see \Cref{def:index_sets}). Pick any complementary pair $(I'_{i',\ell}, I'_{j',L-2-\ell})$ in $\mathcal{T}'$ and define $(I_{i,\ell+1},I_{j,L-\ell-1})$ as in \Cref{lemma:direction2}. 
    By \Cref{lemma:direction2} we have 
    \begin{equation*}
        \rank(\bm{X}_{s,t}(I'_{i',\ell},I'_{j',L-2-\ell})) \leq \rank(\bm{B}(I_{i,\ell+1},I_{j,L-\ell-1})) \leq k,
    \end{equation*}
    since $(I_{i,\ell+1},I_{j,L-\ell-1})$ are complementary pairs. 
    Hence, since the complementary pair $I'_{i',\ell}$ and $I'_{j',L-2-\ell}$ was arbitrary, by \Cref{def:butterfly} $\bm{X}_{s,t} \in \butterfly(L-2,k)$, as required. 

    $(\Leftarrow)$ Suppose $\bm{B}$ has the recursive structure outlined in \Cref{theorem:recursive}. We must now show that $\bm{B} \in \butterfly(L,k)$. Note that $\bm{B}(I_{i,L},I_{1,0}) = \bm{B}(I_{i,L},:) = \bm{U}_i \bm{C}_i$ for some matrix $\bm{C}_i \in \mathbb{R}^{k \times 2^{L+1}k}$. Hence, $\rank(\bm{B}(I_{i,L},I_{1,0})) \leq k$. By an analogous argument, $\rank(\bm{B}(I_{1,0},I_{j,L})) \leq k$. Now choose any complementary pair $(I_{i,\ell}, I_{j,L-\ell})$ where $\ell \in \{1,\ldots,L-1\}$. Then, $I_{i,\ell}, I_{j,L-\ell} \in \domain(\leftfold)$. Define 
    \begin{align*}
        I_{i',\ell-1} &= \halving(\leftfold(I_{i,\ell})), \quad  &&s = \side(I_{i,\ell}),\\
        I_{j',L-\ell-1} &= \halving(\leftfold(I_{j,L-\ell})), \quad  && t= \side(I_{j,L-\ell})
    \end{align*}
    By assumption, $\bm{X}_{s,t} \in \butterfly(L-2,k)$ so $\rank(\bm{X}_{s,t}(I'_{i',\ell-1},I'_{j',L-\ell-1})) \leq k$. Then, by \Cref{lemma:direction1} we have
    \begin{equation*}
        \rank(\bm{B}(I_{i,\ell},I_{j,L-\ell})) = \rank(\bm{X}_{s,t}(I'_{i',\ell-1},I'_{j',L-\ell-1})) \leq k.
    \end{equation*}
    Hence, $\bm{B} \in \butterfly(L,k)$, as required. 
\end{proof}

%% file: leftfold.tex
\begin{tikzpicture}[>=Stealth]
\def\L{3}
\def\boxheight{.6}
\def\rowsep{3.0}
\def\bracketheight{0.35}
\def\bracketgap{0.1}
\pgfmathtruncatemacro{\N}{2^\L}
\pgfmathtruncatemacro{\Nhalf}{\N/2}

\foreach \i in {1,...,\N} {
    \pgfmathsetmacro{\xl}{\i-1}
    \pgfmathsetmacro{\xr}{\i}
    \draw (\xl,0) rectangle (\xr,\boxheight);
    \node at ({(\xl+\xr)/2},{0.5*\boxheight}) {$I_{\i,\L}$};
}

\foreach \i in {1,...,\N} {
    \pgfmathsetmacro{\xl}{\i-1}
    \pgfmathsetmacro{\xr}{\i}
    \draw (\xl,\rowsep) rectangle (\xr,\rowsep+\boxheight);
    \node at ({(\xl+\xr)/2},{\rowsep+0.5*\boxheight}) {$I_{\i,\L}$};
}

\pgfmathsetmacro{\bracy}{\rowsep+\boxheight+\bracketgap}
\pgfmathsetmacro{\bracytop}{\bracy+\bracketheight}
\draw ({0+.05},\bracy) -- (0.2,\bracytop) -- (\Nhalf-0.2,\bracytop) -- ({\Nhalf-.05},\bracy);
\node[above] at ({\Nhalf/2},\bracytop) {\small side 1};
\draw ({\Nhalf+.05},\bracy) -- (\Nhalf+0.2,\bracytop) -- (\N-0.2,\bracytop) -- ({\N-.05},\bracy);
\node[above] at ({(\Nhalf+\N)/2},\bracytop) {\small side 2};

\foreach \i in {1,...,\Nhalf} {
    \pgfmathsetmacro{\xmid}{\i - 0.5}
    \draw[->] (\xmid, \rowsep) -- (\xmid, {\boxheight+.02});
}

\foreach \i in {1,...,\Nhalf} {
    \pgfmathtruncatemacro{\src}{\i + \Nhalf}
    \pgfmathsetmacro{\xsrc}{\src - 0.5}
    \pgfmathsetmacro{\xdst}{\i - 0.5+.1}
    \draw[->] (\xsrc, \rowsep) -- (\xdst, {\boxheight+.03});
}

\node[anchor=west] at (-1,{\boxheight/2+\rowsep/2}) {\small leftfold};
\node[anchor=east] at (\N+1,1) {};

\end{tikzpicture}

%% file: absolute.tex
\section{Proof of \Cref{theorem:absolute}}\label{section:absolute}
In this section, we provide the proof of \Cref{theorem:absolute}. In spirit, it is similar to the proof of \Cref{theorem:meta_butterfly}. However, it relies on notation and results introduced in \Cref{section:recursive}. We begin with the following simple theorem, which allows us to obtain an absolute error bound for the approximation quality returned by \Cref{alg:bsep_greedy_meta}. 

\begin{theorem}\label{theorem:blockblrabsolute}
     Consider a matrix $\bm{A} \in \mathbb{R}^{2^{L+1} k \times 2^{L+1} k}$. For $i = 1,\ldots,2^{L}$, define
     \begin{align*}
         &\varepsilon_{i,\mathrm{r}} := \|\bm{A}(I_{i,L},:)-\llbracket \bm{A}(I_{i,L},:)\rrbracket_k\|_\F , \quad \text{and} \\
        &\varepsilon_{i,\mathrm{c}} := \|\bm{A}(:,I_{i,L})-\llbracket \bm{A}(:,I_{i,L})\rrbracket_k\|_\F
     \end{align*}
     as the optimal low-rank approximation errors for the complementary blocks at the extreme levels. Then under the assumptions in \Cref{theorem:bsep}, the output of \Cref{alg:bsep_greedy_meta} we have 
    \begin{align*}
        \frac{1}{2}\sum\limits_{i=1}^{2^L} \left[\varepsilon_{i,\mathrm{r}}^2 + \varepsilon_{i,\mathrm{c}}^2\right] \leq \min\limits_{\bm{C} \in \bsep(L,k)} \|\bm{A} - \bm{C}\|_\F^2 \leq \|\bm{A} - \bm{B}\|_\F^2 \leq \Gamma\sum\limits_{i=1}^{2^L} \left[\varepsilon_{i,\mathrm{r}}^2 + \varepsilon_{i,\mathrm{c}}^2\right].
    \end{align*}
\end{theorem}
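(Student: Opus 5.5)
We prove the three inequalities separately. The middle one, $\min_{\bm{C} \in \bsep(L,k)} \|\bm{A} - \bm{C}\|_\F^2 \leq \|\bm{A} - \bm{B}\|_\F^2$, is immediate because the output $\bm{B} = \bm{U}\bm{X}\bm{V}^\T$ of \Cref{alg:bsep_greedy_meta} lies in $\bsep(L,k)$ by construction.

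\textbf{Upper bound.} Since $\bm{X} = \bm{U}^\T\bm{A}\bm{V}$, we have $\bm{B} = \bm{U}\bm{U}^\T\bm{A}\bm{V}\bm{V}^\T$. The two error pieces are orthogonal, so Pythagoras gives
\begin{equation*}
\|\bm{A}-\bm{B}\|_\F^2 = \|(\bm{I}-\bm{U}\bm{U}^\T)\bm{A}\|_\F^2 + \|\bm{U}\bm{U}^\T\bm{A}(\bm{I}-\bm{V}\bm{V}^\T)\|_\F^2 \leq \|(\bm{I}-\bm{U}\bm{U}^\T)\bm{A}\|_\F^2 + \|\bm{A}(\bm{I}-\bm{V}\bm{V}^\T)\|_\F^2 .
\end{equation*}
Because $\bm{U}$ is block diagonal with respect to the leaf partition $\{I_{i,L}\}$, the first term splits as $\sum_i \|\bm{A}(I_{i,L},:)-\bm{U}_i\bm{U}_i^\T\bm{A}(I_{i,L},:)\|_\F^2$. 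The second term splits column-wise in the same way using $\bm{V}$. Applying the hypotheses of \Cref{theorem:bsep} to each summand bounds the total by $\Gamma\sum_i[\varepsilon_{i,\mathrm{r}}^2+\varepsilon_{i,\mathrm{c}}^2]$.

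\textbf{Lower bound.} Let $\bm{C}\in\bsep(L,k)$ be arbitrary. By the equivalent characterization in \Cref{def:bsep}, each row block $\bm{C}(I_{i,L},:)$ and each column block $\bm{C}(:,I_{i,L})$ has rank at most $k$. By Eckart--Young,
\begin{equation*}
\|\bm{A}(I_{i,L},:)-\bm{C}(I_{i,L},:)\|_\F^2\geq\varepsilon_{i,\mathrm{r}}^2 \quad\text{and}\quad \|\bm{A}(:,I_{i,L})-\bm{C}(:,I_{i,L})\|_\F^2\geq\varepsilon_{i,\mathrm{c}}^2 .
\end{equation*}
The row blocks $I_{i,L}$ partition the rows, so summing the first inequality over $i$ gives $\|\bm{A}-\bm{C}\|_\F^2\geq\sum_i\varepsilon_{i,\mathrm{r}}^2$. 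Summing the second in the same way gives $\|\bm{A}-\bm{C}\|_\F^2\geq\sum_i\varepsilon_{i,\mathrm{c}}^2$. Averaging the two bounds yields $\|\bm{A}-\bm{C}\|_\F^2\geq\frac12\sum_i[\varepsilon_{i,\mathrm{r}}^2+\varepsilon_{i,\mathrm{c}}^2]$. Taking the minimum over $\bm{C}$ gives the claim; the minimum exists by closedness, as noted in the footnote to \Cref{theorem:contraction}, or one can simply take the infimum.

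There is no real obstacle here; the only point needing care is the Pythagorean split in the upper bound. It requires checking that $(\bm{I}-\bm{U}\bm{U}^\T)\bm{A}$ is orthogonal to $\bm{U}\bm{U}^\T\bm{A}(\bm{I}-\bm{V}\bm{V}^\T)$ in the Frobenius inner product, which holds since $\bm{U}\bm{U}^\T$ is an orthogonal projector. We also need the fact that left-multiplying by $\bm{U}\bm{U}^\T$ does not increase the Frobenius norm.
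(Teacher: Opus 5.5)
Your proof is correct and follows the paper's argument essentially verbatim. The upper bound uses the same Pythagorean split and block-diagonal decomposition. The lower bound uses the same row-block/column-block Eckart--Young summation; the paper applies it directly to a minimizer $\bm{B}^{\star}$, where you take an arbitrary $\bm{C}$ and then the infimum, which makes no real difference.
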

\begin{proof}
    Let $\bm{B}^{\star} \in \argmin\limits_{\bm{C} \in \bsep(L,k)}\|\bm{A} - \bm{C}\|_\F$.  Then,
    \begin{align*}
        \frac{1}{2} \sum\limits_{i=1}^{2^L} \left[\varepsilon_{i,\mathrm{r}}^2 + \varepsilon_{i,\mathrm{c}}^2\right] & \leq \frac{1}{2}\sum\limits_{i=1}^{2^L} \left[\|\bm{A}(I_{i,L},:) - \bm{B}^{\star}(I_{i,L},:)\|_\F^2 + \|\bm{A}(:,I_{i,L}) - \bm{B}^{\star}(:,I_{i,L})\|_\F^2\right]\\
        &= \|\bm{A} - \bm{B}^{\star}\|_\F^2\\
        &= \min\limits_{\bm{C} \in \bsep(L,k)}\|\bm{A} - \bm{C}\|_\F^2\\
        & \leq \|\bm{A} - \bm{B}\|_\F^2.
    \end{align*}
    To obtain final inequality, we apply the Pythagorean theorem:
    \begin{align*}
        \|\bm{A} - \bm{B}\|_\F^2 &= \|\bm{A} - \bm{U}\bm{U}^\T \bm{A}\|_\F^2 + \|\bm{U}\bm{U}^\T \bm{A} -  \bm{U}\bm{U}^\T \bm{A}\bm{V}\bm{V}^\T \|_\F^2 \\
        &\leq \|\bm{A} - \bm{U}\bm{U}^\T \bm{A}\|_\F^2 + \| \bm{A} -   \bm{A}\bm{V}\bm{V}^\T\|_\F^2\\
        &\leq \Gamma \sum\limits_{i=1}^{2^L} \left[\varepsilon_{i,\mathrm{r}}^2 + \varepsilon_{i,\mathrm{c}}^2\right],
    \end{align*}
    as required.
\end{proof}

We can now proceed with the proof of \Cref{theorem:absolute}.
\begin{proof}[Proof of \Cref{theorem:absolute}]
    We begin with proving \cref{eq:absolute_main} by induction on $h$, where $L = 2h$. The proof for $h = 0$ is immediate. Assume the result holds for $h = \frac{L}{2}-1$.
    We need to show the result holds for $h = \frac{L}{2}$. 
    We have
    \begin{align*}
        \|\bm{A} - \bm{B}\|_\F^2 &= \|\bm{A} - \bm{U}\bm{U}^\T \bm{A} \bm{V}\bm{V}^\T \|_\F^2 + \sum\limits_{s,t = 1}^{2}\|\widehat{\bm{X}}_{s,t} - \bm{X}_{s,t}\|_\F^2 \tag{Pythagorean theorem}\\
        & \leq \Gamma\sum\limits_{i=1}^{2^L} \left[\varepsilon(I_{i,L},I_{1,0})^2 + \varepsilon(I_{1,0},I_{i,L})^2\right] + \sum\limits_{s,t = 1}^{2}\|\widehat{\bm{X}}_{s,t} - \bm{X}_{s,t}\|_\F^2. \tag{\Cref{theorem:blockblrabsolute}}
    \end{align*}
    Let $\mathcal{T} = \{I_{i,\ell}\}$ and $\mathcal{T}' = \{I'_{i',\ell}\}$ be perfect dyadic binary partition tree of $[2^{L+1}k]$ and $[2^{L-1}k]$, respectively. 
    Choose any complementary pair $(I'_{i',\ell},I_{j',L-2-\ell})$ in $\mathcal{T}'$.
    By \Cref{lemma:direction2,lemma:indices}, $\widehat{\bm{X}}_{s,t}(I'_{i',\ell},I'_{j',L-2 - \ell})$ is a contraction\footnote{A contraction $\bm{C}$ of a matrix $\bm{D}$ is a transformation $\bm{C} = \bm{Q} \bm{D} \bm{W}$ where $\|\bm{Q}\|_2,\|\bm{W}\|_2 \leq 1$. The singular values of a contraction must only get smaller since $\sigma_i(\bm{C}) \leq \|\bm{Q}\|_2 \|\bm{W}\|_2 \sigma_i(\bm{D}) \leq \sigma_i(\bm{D})$.} of $\bm{A}(I_{i,\ell+1},I_{j,L-\ell-1})$, where $I_{i,\ell+1},I_{j,L-\ell-1} \in \mathcal{T}$ are as in  \Cref{lemma:direction2}.
    So the singular values can only decrease and 
    \begin{equation}\label{eq:rankineq}
        \|\widehat{\bm{X}}_{s,t}(I'_{i',\ell},I'_{j',L-2 - \ell}) -\llbracket \widehat{\bm{X}}_{s,t}(I'_{i',\ell},I'_{j',L-2 - \ell})\rrbracket_k\|_\F \leq \varepsilon(I_{i,\ell+1},I_{j,L-\ell-1}). 
    \end{equation}
    Hence, inductively we have\footnote{As we sum over all $s,t = 1,2$ and all complementary pairs $(I'_{i',\ell},I'_{j',L-2-\ell})$ in $\mathcal{T}'$ we also sum over all complementary pairs $(I_{i,\ell},I_{j,L-\ell})$ in $\mathcal{T}$ where $\ell= 1,\ldots,L/2 - 1$.}
    \begin{align*}
        \sum\limits_{s,t = 1}^{2}\|\widehat{\bm{X}}_{s,t} - \bm{X}_{s,t}\|_\F^2 \leq \Gamma\sum\limits_{\ell = 1}^{L/2 -1} \sum\limits_{i=1}^{2^{\ell}} \sum\limits_{j=1}^{2^{L-\ell}} \left[\varepsilon(I_{i,\ell},I_{j,L-\ell})^2 + \varepsilon(I_{j,L-\ell},I_{i,\ell})^2\right].
    \end{align*}
    Hence, we obtain \cref{eq:absolute_main}
    \begin{align*}
        \|\bm{A} - \bm{B}\|_\F^2 &= \|\bm{A} - \bm{U}\bm{U}^\T \bm{A} \bm{V}\bm{V}^\T \|_\F^2 + \sum\limits_{s,t = 1}^{2}\|\widehat{\bm{X}}_{s,t} - \bm{X}_{s,t}\|_\F^2\\
        &\leq\Gamma\sum\limits_{i=1}^{2^L} \left[\varepsilon(I_{i,L},I_{1,0})^2 + \varepsilon(I_{1,0},I_{i,L})^2\right] + \Gamma\sum\limits_{\ell = 1}^{L/2 -1} \sum\limits_{i=1}^{2^{\ell}} \sum\limits_{j=1}^{2^{L-\ell}} \left[\varepsilon(I_{i,\ell},I_{j,L-\ell})^2 + \varepsilon(I_{j,L-\ell},I_{i,\ell})^2\right]\\
        &=\Gamma \sum\limits_{\ell = 0}^{L/2} \sum\limits_{i=1}^{2^{\ell}} \sum\limits_{j=1}^{2^{L-\ell}} \left[\varepsilon(I_{i,\ell},I_{j,L-\ell})^2 + \varepsilon(I_{j,L-\ell},I_{i,\ell})^2\right],
    \end{align*}
    as required. To obtain \cref{eq:absolute1} we bound
    \begin{align*}
        \|\bm{A} - \bm{B}\|_\F^2 &\leq\Gamma \sum\limits_{\ell = 0}^{L/2} \sum\limits_{i=1}^{2^{\ell}} \sum\limits_{j=1}^{2^{L-\ell}} \left[\varepsilon(I_{i,\ell},I_{j,L-\ell})^2 + \varepsilon(I_{j,L-\ell},I_{i,\ell})^2\right]\\
        &\leq 2 \Gamma\varepsilon^2 \sum\limits_{\ell = 0}^{L/2} \sum\limits_{i=1}^{2^{\ell}} \sum\limits_{j=1}^{2^{L-\ell}}1 \\
        &= \Gamma(L+2)2^L\varepsilon^2.
    \end{align*}
    \cref{eq:absolute1} is obtained by noting $2^L = \frac{N}{2k}$. To obtain \cref{eq:absolute2} we note that 
    \begin{align*}
        \sum\limits_{\ell = 0}^{L/2}\sum\limits_{i=1}^{2^{\ell}} \sum\limits_{j=1}^{2^{L-\ell}} \left[\varepsilon(I_{i,\ell},I_{j,L-\ell})^2 + \varepsilon(I_{j,L-\ell},I_{i,\ell})^2\right]
        &\leq \varepsilon^2 \sum\limits_{\ell = 0}^{L/2}\sum\limits_{i=1}^{2^{\ell}} \sum\limits_{j=1}^{2^{L-\ell}} \left[\|\bm{A}(I_{i,\ell},I_{j,L-\ell})\|_\F^2 + \|\bm{A}(I_{j,L-\ell},I_{i,\ell})\|_\F^2\right]\\
        &\leq2 \varepsilon^2 \|\bm{A}\|_\F^2 \sum\limits_{\ell=0}^{L/2} 1\\
        &\leq (L+2) \varepsilon^2\|\bm{A}\|_\F^2,
    \end{align*}
    as required.
\end{proof}

%% file: memory_efficient_implementation.tex
\section{How to make \Cref{alg:butterfly_greedy_matvec} memory efficient}\label{section:memoryeff} We now describe a memory-efficient implementation of \Cref{alg:butterfly_greedy_matvec}. Recall that the memory bottleneck is the storage of the sketches $\{\bm{Y}^{(\ell)}, \bm{Z}^{(\ell)}\}_{\ell = 0,2,\ldots,L}$. When all sketches are computed in advance, many of them remain unused for a long period because they are only needed locally within a small part of the recursion.  Instead, one can form the relevant parts of these sketches just before they are needed in the recursion. Consider $\bm{\Omega}^{(\ell)}$ as defined in \cref{eqn:sketch_def}, and let 
\begin{equation*}
    \bm{\Omega}^{(\ell)}_{i,\text{col}} = \bm{e}_i \otimes \bm{\Omega}^{(\ell)}_i
\end{equation*}
be the $i^{\text{th}}$ block column of $\bm{\Omega}^{(\ell)}$, where $\bm{e}_i \in \mathbb{R}^{2^{\frac{L-\ell}{2}}}$ is the $i^{\text{th}}$ canonical basis vector, so that
\begin{equation*}
    \bm{\Omega}^{(\ell)} = \begin{bmatrix} \bm{\Omega}^{(\ell)}_{1,\text{col}} & \cdots & \bm{\Omega}^{(\ell)}_{2^\frac{L-\ell}{2},\text{col}} \end{bmatrix}.
\end{equation*}
Hence, if $\bm{Y}^{(\ell)}_{i,\text{col}} = \bm{A} \bm{\Omega}^{(\ell)}_{i,\text{col}}$ then $\bm{Y}^{(\ell)}$ in \cref{eq:matvecs} takes the form
\begin{equation}\label{eq:blockmatvecs}
    \bm{Y}^{(\ell)} = \begin{bmatrix} \bm{Y}^{(\ell)}_{1,\text{col}} & \cdots & \bm{Y}^{(\ell)}_{2^\frac{L-\ell}{2},\text{col}} \end{bmatrix}.
\end{equation}
The memory-efficient implementation never forms $\bm{Y}^{(\ell)}$ explicitly. 
Instead, it computes a single block column $\bm{Y}^{(\ell)}_{i,\text{col}}$, uses it to construct every basis in the recursion that depends on that block column, and then immediately discards it.
The same idea applies to the transpose sketches. Writing
\begin{equation*}
    \bm{\Psi}^{(\ell)} = \begin{bmatrix} \bm{\Psi}^{(\ell)}_{1,\text{col}} & \cdots & \bm{\Psi}^{(\ell)}_{2^\frac{L-\ell}{2},\text{col}} \end{bmatrix}
\end{equation*}
and defining
\begin{equation}\label{eq:blockmatvecs_transpose}
    \bm{Z}_{i,\text{col}}^{(\ell)} =\bm{A}^\T \bm{\Psi}_{i,\text{col}}^{(\ell)},
\end{equation}
we obtain a similar block-column format of $\bm{Z}^{(\ell)}$. 
The memory-efficient implementation is given in \Cref{alg:butterfly_greedy_matvec_memory}. The memory complexity is improved to $O(Nk\log(N/k))$, while the output and approximation guarantees remain unchanged. 
The algorithm loops over the levels $\ell$ and block-columns indices $i$, computes a single sketch $\bm{Y}^{(\ell)}_{i,\text{col}}$ or $\bm{Z}^{(\ell)}_{i,\text{col}}$, and immediately passes it to \Cref{alg:recursefun}, which propagates this sketch through the butterfly hierarchy and perform all updates in the butterfly structure associated with that sketch. 
\Cref{alg:recursefun} takes the following inputs:
\begin{enumerate}
    \item[i)] A butterfly struct $\bm{B}$. Initially this is an empty butterfly struct. Each call to \Cref{alg:recursefun} updates the portion of the butterfly factorization associated with the current sketch.
    \item[ii)] Sketch $\bm{S}$. This is one of the sketch blocks $\bm{Y}^{(\ell)}_{i,\text{col}}$ or $\bm{Z}^{(\ell)}_{i,\text{col}}$, which is propagated through the butterfly hierarchy. 
    \item[iii)] The leaf level $L$, level $\ell$, and index $i$. These inputs determine the depth of the recursion and identify the subtree of the butterfly structure which will be updated by the sketch. 
    \item[iv)] $\text{Direction} \in \{\text{left},\text{right}\}$. This records whether the sketch is with $\bm{A}$ ($\text{Direction} = \text{right}$) or $\bm{A}^\T$ ($\text{Direction} = \text{left}$).
    \item[v)] Sketch matrix $\bm{\Psi}$ (only if $\ell = 0$). At the lowest level, the generalized Nyström approximation requires access to the sketch matrix itself. Consequently, when $\ell =0$, the corresponding block of $\bm{\Psi}$ is propagated through the recursion together with the sketch.
\end{enumerate}

\begin{algorithm}
\caption{Greedy $\butterfly$ approximation (memory-efficient matvec algorithm)}
\label{alg:butterfly_greedy_matvec_memory}
\textbf{input:} Matvec-access to $\bm{A}$. Rank parameter $k$. Level $L$.\\
\textbf{output:} An approximation $\bm{B} \in \butterfly(L,k)$ with factorization as in \Cref{theorem:recursive}.
\begin{algorithmic}[1]
\State Initialize an empty struct $\bm{B} \in \butterfly(L,k)$. 
\For{$\ell = L,L-2,\ldots,2,0$}
    \For{$i = 1,\ldots,2^{\frac{L-\ell}{2}}$}
        \State Compute $\bm{Y}^{(\ell)}_{i,\text{col}} = \bm{A} \bm{\Omega}^{(\ell)}_{i,\text{col}}$ as in \cref{eq:blockmatvecs}.
        \State Call \Cref{alg:recursefun} with inputs\vspace{-0.5\baselineskip}
        \[
        \begin{array}{rl@{\qquad\qquad}rl@{\qquad\qquad}rl}
            \text{Butterfly struct:} & \bm{B} & \text{Sketch:} & \bm{Y}^{(\ell)}_{i,\text{col}} & \text{Leaf level:} & L \\
            \text{Level:} & \ell & \text{Index:} & i & \text{Direction:} & \text{right} \\
        \end{array}
        \]\vspace{-0.5\baselineskip}
        \State Discard $\bm{Y}^{(\ell)_{i,\text{col}}}$.
    \EndFor
\EndFor
\For{$\ell = L,L-2,\ldots,2,0$}
    \For{$i = 1,\ldots,2^{\frac{L-\ell}{2}}$}
        \State  Compute $\bm{Z}^{(\ell)}_{i,\text{col}} = \bm{A}^\T\bm{\Psi}^{(\ell)}_{i,\text{col}}$ as in \cref{eq:blockmatvecs_transpose}.
        \State Call \Cref{alg:recursefun} with inputs\vspace{-0.5\baselineskip}
        \[
        \begin{array}{rl@{\qquad\qquad}rl@{\qquad\qquad}rl}
            \text{Butterfly struct:} & \bm{B} & \text{Sketch:} & \bm{Z}^{(\ell)}_{i,\text{col}} & \text{Leaf level:} & L \\
            \text{Level:} & \ell & \text{Index:} & i & \text{Direction:} & \text{left} \\
            \text{Sketch matrix (only if $\ell = 0$):} & \bm{\Psi}^{(0)}_{i,\text{col}}
        \end{array}
        \]\vspace{-0.5\baselineskip}
        \State Discard $\bm{Z}^{(\ell)}_{i,\text{col}}$.
    \EndFor
\EndFor
\State \textbf{return} $\bm{B}$
\end{algorithmic}
\end{algorithm}

\begin{algorithm}
\caption{Recursive update from sketches}\label{alg:recursefun}
\textbf{input:} Butterfly struct $\bm{B}$. Sketch $\bm{S}$. Leaf level $L$. Level $\ell$. Index $i$. Direction $\in\{\text{left},\text{right}\}$. Sketch matrix: $\bm{\Psi}$ (only if $\ell = 0$ and $\text{Direction} = \text{left}$).\\
\textbf{output:} An updated struct $\bm{B}$.
\begin{algorithmic}[1]
\If{$\ell = L = 0$ and $\text{Direction} = \text{left}$}
    \State Obtain the left basis $\bm{U}$ from the struct $\bm{B}$.
    \State $\bm{M} = (\bm{\Psi}^\T \bm{U})^{\dagger} \bm{S}^\T$.
    \State Compute the rank reduced SVD of $\bm{M}$ to obtain $\bm{W} \bm{\Sigma} \bm{V}^\T$.
    \State Let $\bm{X} = \bm{W}\bm{\Sigma}$.
    \State Update the struct $\bm{B}$ by appending the core matrix $\bm{X}$ and right basis $\bm{V}$.
\ElsIf{$\ell = L$}
    \State Partition
    \begin{equation*}
        \bm{S} = \begin{bmatrix} \bm{S}^{(\cdot,1)} \\ \vdots \\ \bm{S}^{(\cdot,2^{L})} \end{bmatrix}, \quad \bm{S}^{(\cdot,i)} \in \mathbb{R}^{2k \times \alpha k}.
    \end{equation*}
    \For{$i = 1,\ldots,2^L$}
        \State Compute the top $k$ left singular vectors $\bm{W}_i$ for $\bm{S}^{(\cdot,i)}$.
    \EndFor
    \State $\bm{W} = \blockdiag(\bm{W}_1,\ldots,\bm{W}_{2^L})$. 
    \If{$\text{direction} = \text{right}$}
        \State Update the struct $\bm{B}$ by letting $\bm{U} = \bm{W}$ be the left block-diagonal basis. 
    \Else
        \State Update the struct $\bm{B}$ by letting $\bm{V} = \bm{W}$ be the right block-diagonal basis. 
    \EndIf
\Else
    \If{$\text{direction} = \text{right}$}
        \State Let $\bm{W} = \bm{U}$, where $\bm{U}$ is the left block-diagonal basis in the struct $\bm{B}$.
    \Else
        \State Let $\bm{W} = \bm{V}$, where $\bm{V}$ is the right block-diagonal basis in the struct $\bm{B}$.
    \EndIf
    \State Partition $\bm{W}^\T\bm{S} = \begin{bmatrix} \bm{S}_1 \\ \bm{S}_2 \end{bmatrix}$.
    \State Let
    \begin{equation*}
        \bm{X} = \begin{bmatrix} \bm{X}_{1,1} & \bm{X}_{1,2}\\\bm{X}_{2,1} & \bm{X}_{2,2} \end{bmatrix}
    \end{equation*}
    be the core matrix from $\bm{B}$, where each $\bm{X}_{s,t}$ for $s,t = 1,2$ are recursive butterfly structs. 
    \If{$\ell = 0$ and $\text{Direction} = \text{left}$}
        \State Partition $\bm{U}^\T \bm{\Psi} = \begin{bmatrix} \widehat{\bm{\Psi}}_{1} \\ \widehat{\bm{\Psi}}_{2} \end{bmatrix}$ \Comment{Since the upper or lower half of $\bm{\Psi}$ is zero, $\widehat{\bm{\Psi}}_{1}$ or $\widehat{\bm{\Psi}}_{2}$ is zero.}
    \EndIf
    \algstore{recurse}
\end{algorithmic}
\end{algorithm}
\begin{algorithm}
\caption{Recursive update from sketches (continued)}
\begin{algorithmic}
\algrestore{recurse}
    \State Conditioned on $i$ and $\text{Direction}$, define
    \begin{align*}
        i \leq 2^{\frac{L-\ell}{2} - 1}, \text{Direction}  =\text{right} &\Rightarrow (s_1,t_1) = (1,1), (s_2,t_2) = (2,1), j = i\\
        i > 2^{\frac{L-\ell}{2} - 1}, \text{Direction}  =\text{right} &\Rightarrow (s_1,t_1) = (1,2), (s_2,t_2) = (2,2), j = i - 2^{\frac{L-\ell}{2} - 1}\\
        i \leq 2^{\frac{L-\ell}{2} - 1}, \text{Direction}  =\text{left} &\Rightarrow (s_1,t_1) = (1,1), (s_2,t_2) = (1,2), j = i\\
        i > 2^{\frac{L-\ell}{2} - 1}, \text{Direction}  =\text{left} &\Rightarrow (s_1,t_1) = (2,1), (s_2,t_2) = (2,2), j = i - 2^{\frac{L-\ell}{2} - 1}
    \end{align*}
    \For{$n = 1,2$}
        \State Update the struct $\bm{X}_{s_n,t_n}$ by recursively applying \Cref{alg:recursefun} with inputs
        \[
        \begin{array}{rl@{\qquad\qquad}rl@{\qquad\qquad}rl}
            \text{Butterfly struct:} & \bm{X}_{s_n,t_n} & \text{Sketch:} & \bm{S}_n & \text{Leaf level:} & L-2 \\
            \text{Level:} & \ell & \text{Index:} & j & \text{Direction:} & \text{Direction} \\
            \text{Sketch matrix (only if given):} & \widehat{\bm{\Psi}}_{s_n}
        \end{array}
        \]
    \EndFor
\EndIf
\end{algorithmic}
\end{algorithm}

%% file: naive_matvec_algorithm.tex
\section{A matrix-vector product algorithm with stronger optimality guarantees}
\label{sec:matvec_strongeraccuracy}

The matvec algorithm in \cref{section:matvec} uses $\widetilde{O}(\sqrt{N})$ matvecs to achieve a $O(N^{1/4})$-quasi-optimality guarantee. In this section, we describe a simple matrix-vector algorithm that uses $O(\sqrt{N})$ matvecs to attain a $O(\sqrt{L})$-quasi-optimality guaranty. 
This algorithm is not as efficient in terms of working memory, but it is conceptually simple. 

First, we introduce a family of matrices which have low-rank blocks corresponding to the complimentary low-rank property at $\ell = L/2$.
\begin{definition}[$\butterflymid$]\label{def:butterflymid}
Let $L, k\in \mathbb{N}$ be fixed, and assume that $L$ is even. Consider a matrix $\bm{B} \in \mathbb{R}^{2^{L+1}k \times 2^{L+1}k}$. Using the notation from \Cref{def:index_sets}, we say that $\bm{B} \in \butterflymid(L,k)$ if for every $i,j = 1,\ldots,2^{L/2}$, $\rank(\bm{B}(I_{i,L/2},I_{j,L/2})) \leq k$.
\end{definition}
A near-optimal $\butterflymid$ approximation is obtained by computing near-optimal rank-$k$ approximation to each block $\bm{A}(I_{i,L/2},I_{j,L/2})$. 
The procedure described here will be essentially identical to the one described in \cite[Section 3.1]{li2015butterfly}.
Generate block-diagonal sketch matrices $\bm{\Omega}^{(L/2)}$ and $\bm{\Psi}^{(L/2)}$ as in \cref{eqn:sketch_def} with $\alpha = O(1)$ and compute
\begin{align*}
    \bm{A} \bm{\Omega}^{(L/2)} =  \begin{bmatrix} \bm{Y}_{1,1} & \cdots & \bm{Y}_{1,2^{L/2}} \\
    \vdots & \ddots & \vdots \\
    \bm{Y}_{2^{L/2},1} & \cdots & \bm{Y}_{2^{L/2},2^{L/2}}
    \end{bmatrix},\quad \bm{A}^\T \bm{\Psi}^{(L/2)} = \begin{bmatrix} \bm{Z}_{1,1} & \cdots & \bm{Z}_{2^{L/2},1} \\
    \vdots & \ddots & \vdots \\
    \bm{Z}_{1,2^{L/2}} & \cdots & \bm{Z}_{2^{L/2},2^{L/2}}
    \end{bmatrix},
\end{align*}
where $\bm{Y}_{i,j} = \bm{A}(I_{i,2^{L/2}},I_{j,2^{L/2}})\bm{\Omega}_j^{(L/2)}$ and $\bm{Z}_{i,j}=\bm{A}(I_{i,2^{L/2}},I_{j,2^{L/2}})^\T \bm{\Psi}_i^{(L/2)}$. 
From this we can compute a generalized Nyström approximation to each complementary low-rank block. 
Thus, we can sketch all $(2^{L/2}) \times (2^{L/2})$ blocks using just $O((2^{L/2})k)$ matvecs from each side. This means that, from $O(2^{L/2}k) = O(\sqrt{Nk})$ matvecs with $\bm{A}$ we can construct $\widehat{\bm{A}} \in \butterflymid(L,k)$ satisfying
\begin{equation}
    \label{eqn:A_to_Ahat_approx}
    \mathbb{E}\|\bm{A} - \widehat{\bm{A}}\|_\F \leq C \min\limits_{\bm{C} \in \butterflymid(L,k)}\|\bm{A} - \bm{C}\|_\F,
\end{equation}
for a universal constant $C = O(1)$. Once we have obtained $\widehat{\bm{A}}$, we can use \cref{alg:butterfly_greedy_meta} compute a near-optimal butterfly approximation $\bm{B} \in \butterfly(L,k)$ to $\widehat{\bm{A}}$ \Cref{alg:butterfly_greedy_meta}:
\begin{equation}
    \label{eqn:B_to_Ahat_approx}
    \|\widehat{\bm{A}} - \bm{B}\|_\F \leq \sqrt{\Gamma \cdot (L+2)} \min\limits_{\bm{C} \in \butterfly(L,k)}\|\widehat{\bm{A}} - \bm{C}\|_\F.
\end{equation}
This requires no additional matvecs with $\bm{A}$. By the triangle inequality
\begin{align*}
    \min\limits_{\bm{C} \in \butterfly(L,k)}\|\widehat{\bm{A}} - \bm{C}\|_\F
    & \leq  \| \bm{A} - \widehat{\bm{A}} \|_\F + \min\limits_{\bm{C} \in \butterfly(L,k)}\|\bm{A} - \bm{C}\|_\F.
\end{align*}
Hence, again by the triangle inequality, and by \cref{eqn:B_to_Ahat_approx},
\begin{align*}
    \|\bm{A} - \bm{B}\|_\F & \leq \|\bm{A} - \widehat{\bm{A}}\|_\F + \|\widehat{\bm{A}} - \bm{B}\|_\F 
    \\&\leq \left(1 + \sqrt{\Gamma\cdot(L+2)} \right) \| \bm{A} - \widehat{\bm{A}} \|_\F + \sqrt{\Gamma\cdot(L+2)}\min\limits_{\bm{C} \in \butterfly(L,k)}\|\bm{A} - \bm{C}\|_\F
\end{align*}
Now, since $\butterfly(L,k)\subseteq \butterflymid(L,k)$,
\[
\min\limits_{\bm{C} \in \butterflymid(L,k)}\|\bm{A} - \bm{C}\|_\F \leq \min\limits_{\bm{C} \in \butterfly(L,k)}\|\bm{A} - \bm{C}\|_\F.
\]
Thus, taking expectations, applying \cref{eqn:A_to_Ahat_approx}, and then plugging in the above bounds,
\begin{equation*}
    \mathbb{E} \|\bm{A} - \bm{B}\|_\F
    \leq \left( C + (1+C) \sqrt{\Gamma\cdot(L+2)} \right) \min\limits_{\bm{C} \in \butterfly(L,k)}\|\bm{A} - \bm{C}\|_\F.
    = O(\sqrt{L}) \min\limits_{\bm{C} \in \butterfly(L,k)}\|\bm{A} - \bm{C}\|_\F,
\end{equation*}
which yields the desired bound.

We emphasize that since this method requires storing a matrix $\widehat{\bm{A}} \in \butterflymid(L,k)$, requiring $O(N^{3/2})$ units of memory. Hence, this algorithm can never obtain the optimal memory complexity of $O(N\log(N))$. 
Moreover, if $\widehat{\bm{A}}$ is formed explicitly, then a direct implementation of \Cref{alg:bsep_greedy_meta} requires $O(N^2)$ operations. 
With some care, however, the low-rank representation of $\widehat{\bm{A}}$ can be exploited to obtain an implementation requiring only $O(N^{3/2})$ operations. We omit details. 

%% file: mapbetweenformats.tex
\section{Mapping between recursive and sparse format}
\label{sec:mapbetweenformats}

For ease of implementation, we describe the conversion between the hybrid butterfly factorization in~\cite{LiuXingGuo:2021} and the recursive factorization we present in~\cref{theorem:recursive}. The hybrid butterfly factorization  is defined as follows. Following our convention of a perfect binary tree with $L$ levels and $2k$-leaf size,  $\bm B_L \in \R^{2^{L+1}k \times 2^{L+1}k}$ satisfies

$$
\bm B_L  =\bm U^L \bm R^{L-1} \bm R^{L-2}\cdots\bm R^{L/2}\bm B^{L/2} \bm W^{L/2}\cdots\bm  W^{L-2} \bm W^{L-1} \bm V^0,
$$
where
the ``outer factors'' $\bm U^L \in \R^{2^{L+1}k \times 2^L k}$ and $\bm V^0 \in \R^{2^L k \times 2^{L+1}k}$ are block diagonal matrices of the form 
$$
\bm U^L = \operatorname{blockdiag}(\bm U_1, \dots, \bm U_{2^L}), \quad \bm V^0 = \operatorname{blockdiag}(\bm V_1^\T, \dots, \bm V_{2^L}^\T), $$
where  $\bm U_i,\bm  V_i \in \R^{2k \times k}$ have orthonormal columns. The so-called transfer matrices $\bm R^\ell, \bm W^\ell \in \R^{2^L k \times 2^L k}$ for each level $\ell =  L/2,\dots, L-1 $ are obtained by collecting the level-$\ell$ basis matrices of the complementary low-rank blocks. In particular, the outermost transfer matrices at level $\ell = L-1$ are
\begin{equation}\label{eq:hybrid_transfer_matrices}
\bm R^{L-1}  = \begin{bmatrix}
   \bm U_{11}^{L-1} & &\bm U_{12}^{L-1}&  \\
    &\bm U_{21}^{L-1} & &\bm U_{22}^{L-1}
\end{bmatrix}  \quad \text{and} \quad  \bm  W^{L-1} = \begin{bmatrix}
    \bm V_{11}^{{L-1}^\T} & \\
    & \bm V_{12}^{{L-1}^\T} \\
    \bm V_{21} ^{{L-1}^\T} & \\
    & \bm V_{22}^{{L-1}^\T}
\end{bmatrix},
\end{equation}
where $\bm U_{ij}^{L-1}, \bm V_{ij}^{L-1} \in \R^{2^{\ell}k \times 2^{\ell - 1} k}$ have orthonormal columns. At subsequent levels, each complementary low-rank block corresponds to local transfer matrices of the form~\cref{eq:hybrid_transfer_matrices}, which are then combined and permuted block-wise to yield the transfer matrices $\bm R^\ell$ and $\bm W^\ell$ with the sparsity patterns in~\cite[Figure 3(g)]{LiuXingGuo:2021}.  The middle matrix $\bm{B}^{L/2}$ is a block-permutation matrix as per~\cite[Figure 3(g)]{LiuXingGuo:2021}.

We  now describe how to  recover  the parameters of the  hybrid butterfly factorization from our recursive definition of butterfly matrices in~\cref{theorem:recursive}. Let $\bm B  \in \butterfly(L, k)$. If $L = 0$, the butterfly matrix $\bm B \in \R^{2k \times 2k}$ satisfies  $\bm B = \bm U \bm X \bm V^\T$, where $\bm U, \bm V \in \R^{2k \times k}$, $\bm X \in \R^{k \times k}$. In the corresponding hybrid butterfly factorization, there are no transfer matrices, and thus $\bm B  = \bm U^0 \bm B^0 \bm V^0 $, where $\bm U^0 = \bm U \in \R^{2k \times k}$ and $\bm V ^0 = \bm V \in \R^{2k \times k}$ are trivially block-diagonal matrices with blocks of size $2k \times k$, and $\bm B^0 = \bm X \in \R^{k \times k}$. 

The outermost block-diagonal factors $\bm U , \bm V \in \R^{2^{L+1}k \times 2^L k}$ in the recursive format are related to $\bm B$ by
$$
\bm{B} = \bm{U} \bm{X} \bm{V}^\T.
$$

Then, we convert to the outermost factors in the hybrid factorization simply by setting $\bm U^L := \bm U$ and $\bm V^0 := \bm V$. Because in the base case of $L = 0$, there are no transfer matrices, $\bm B^0$ is the inner matrix $\bm X$. Otherwise, for even $L > 0$, we find the outermost transfer matrices $\bm R^{L-1}$ and $\bm W^{L-1}$ as follows. By~\cref{theorem:recursive}: 
\begin{align*}
     \bm{X} = & \begin{bmatrix} \bm{U}_{\bm F} \bm{F}   \bm{V}_{\bm F}^\T &\bm{U}_{\bm G} \bm{G}   \bm{V}_{\bm G}^\T \\[0.5em]
    \bm{U}_{\bm H} \bm{H}  \bm{V}_{\bm H}^\T & \bm{U}_{\bm K} \bm{K}  \bm{V}_{\bm K}^\T \end{bmatrix}  = \underbrace{\begin{bmatrix} \bm{U}_{\bm F} & & \bm{U}_{\bm G} & \\ 
    & \bm{U}_{\bm H} & & \bm{U}_{\bm K} \end{bmatrix}}_{\bm R^{L-1}} \underbrace{ \begin{bmatrix} \bm{F}   & & & \\
    & & \bm{H}  & \\
    & \bm{G}   & &\\
    & & & \bm{K}  \end{bmatrix} }_{\bm B^{L-1}} \underbrace{\begin{bmatrix} \bm{V}_{\bm F}^\T & \\
    & \bm{V}_{\bm G}^\T \\
    \bm{V}_{\bm H}^\T & \\
    & \bm{V}_{\bm K}^\T \end{bmatrix}}_{\bm W^{L-1}}.
\end{align*}

Thus, the left and right matrices exactly match the definitions of the outer transfer matrices $\bm R^{L-1}$ and $\bm W^{L-1}$. 

To recover the next level's factors, we note that  the matrices $\bm F, \bm G, \bm H, \bm K \in \R^{2^{L-1}k \times 2^{L-1}k}$ have the block structure
$$
\bm Y = \begin{bmatrix}
    \bm U_{\bm Y_{11}} \bm Y_{11} \bm V_{\bm Y_{11}}^\T  & \bm U_{\bm Y_{12}}\bm Y_{12} \bm V_{\bm Y_{12}}^\T  \\
    \bm U_{\bm Y_{21}}\bm Y_{21} \bm V_{\bm Y_{21}}^\T  & \bm U_{\bm Y_{22}} \bm Y_{22} \bm V_{\bm Y_{22}}^\T 
\end{bmatrix}, \quad \bm Y \in \{\bm F, \bm G, \bm H, \bm K\},
$$
where each subblock of the four subblocks is in  $\butterfly(L-1, k)$.
We expand these blocks in $\bm B^{L-1}$ to form the factorization:
$$ \bm B^{L-1} = 
\begin{bmatrix} \bm{F}   & & & \\
    & & \bm{H}  & \\
    & \bm{G}   & &\\
    & & & \bm{K}  \end{bmatrix} = \bm R^{L-2} \bm B^{L-2} \bm W^{L-2}, 
$$
where 
\[
\scalebox{0.6}{$
\bm R^{L-2}
= \left [ \arrayrulecolor{gray!30}
\begin{array}{c|c|c|c|c|c|c|c!{\color{gray!80}\vrule}c|c|c|c|c|c|c|c} 
\bm{U}_{\bm{F}_{1, 1}}& & & & \bm{U}_{\bm{F}_{1, 2}} & & & & & & & & & & & \\
\hline
& \bm{U}_{\bm{F}_{2, 1}} & & & & \bm{U}_{\bm{F}_{2, 2}} & & & & & & & & & & \\
\hline
& & \bm{U}_{\bm{H}_{1, 1}} & & & & \bm{U}_{\bm{H}_{1, 2}} & & & & & & & & & \\
\hline
& & & \bm{U}_{\bm{H}_{2, 1}} & & & & \bm{U}_{\bm{H}_{2, 2}} & & & & & & & & \\
\arrayrulecolor{gray!80} \hline \arrayrulecolor{gray!30}
& & & & & & & & \bm{U}_{\bm{G}_{1, 1}} & & & & \bm{U}_{\bm{G}_{1, 2}} & & & \\
\hline
& & & & & & & & & \bm{U}_{\bm{G}_{2, 1}} & & & & \bm{U}_{\bm{G}_{2, 2}} & & \\
\hline
& & & & & & & & & & \bm{U}_{\bm{K}_{1, 1}} & & & & \bm{U}_{\bm{K}_{1, 2}} & \\
\hline
& & & & & & & & & & & \bm{U}_{\bm{K}_{2, 1}} & & & & \bm{U}_{\bm{K}_{2, 2}} \\
\end{array}
\right ]
$},
\]

$$
\scalebox{0.6}{$
\bm B^{L-2} = \left[ \arrayrulecolor{gray!30} \begin{array}{c|c|c|c|c|c|c|c!{\color{gray!80}\vrule}c|c|c|c|c|c|c}
\bm{F}_{1,1} & & & & & & & & & & & & & & \\
\hline
& & & & \bm{F}_{2,1} & & & & & & & & & & \\
\hline
& & & & & & & & \bm{H}_{1,1} & & & & & & \\
\hline
& & & & & & & & & & & \bm{H}_{2,1} & & & \\
\hline
& \bm{F}_{1,2} & & & & & & & & & & & & & \\
\hline
& & & & & \bm{F}_{2,2} & & & & & & & & & \\
\hline
& & & & & & & & & \bm{H}_{1,2} & & & & & \\
\hline
& & & & & & & & & & & & \bm{H}_{2,2} & & \\
\arrayrulecolor{gray!80} \hline \arrayrulecolor{gray!30}
& & \bm{G}_{1,1} & & & & & & & & & & & & \\
\hline
& & & & & & \bm{G}_{2,1} & & & & & & & \\
\hline
& & & & & & & & & & \bm{K}_{1,1} & & & \\
\hline
& & & & & & & & & & & & & \bm{K}_{2,1} & \\
\hline
& & & \bm{G}_{1,2} & & & & & & & & & & \\
\hline
& & & & & & & \bm{G}_{2,2} & & & & & & & \\
\hline
& & & & & & & & & & & \bm{K}_{1,2} & & & \\
\hline
& & & & & & & & & & & & & & \bm{K}_{2,2} \\
\end{array}\right]$},
$$
and
\[
\scalebox{0.6}{$
\bm W^{L-2}
=
\left[
\arrayrulecolor{gray!30}
\begin{array}{c|c|c|c!{\color{gray!80}\vrule}c|c|c|c}
\bm{V}_{\bm{F}_{1,1}}^\T & & & & & & & \\
\hline
& \bm{V}_{\bm{F}_{1,2}}^\T & & & & & & \\
\hline
& & \bm{V}_{\bm{G}_{1,1}}^\T & & & & & \\
\hline
& & & \bm{V}_{\bm{G}_{1,2}}^\T & & & & \\
\hline
\bm{V}_{\bm{F}_{2,1}}^\T & & & & & & & \\
\hline
& \bm{V}_{\bm{F}_{2,2}}^\T & & & & & & \\
\hline
& & \bm{V}_{\bm{G}_{2,1}}^\T & & & & & \\
\hline
& & & \bm{V}_{\bm{G}_{2,2}}^\T & & & & \\
\arrayrulecolor{gray!80}\hline
\arrayrulecolor{gray!30}
& & & & \bm{V}_{\bm{H}_{1,1}}^\T & & & \\
\hline
& & & & & \bm{V}_{\bm{H}_{1,2}}^\T & & \\
\hline
& & & & & & \bm{V}_{\bm{K}_{1,1}}^\T & \\
\hline
& & & & & & & \bm{V}_{\bm{K}_{1,2}}^\T \\
\hline
& & & & \bm{V}_{\bm{H}_{2,1}}^\T & & & \\
\hline
& & & & & \bm{V}_{\bm{H}_{2,2}}^\T & & \\
\hline
& & & & & & \bm{V}_{\bm{K}_{2,1}}^\T & \\
\hline
& & & & & & & \bm{V}_{\bm{K}_{2,2}}^\T
\end{array}
\right]
$}
\]
Using the recursive factorization, one can repeatedly rewrite the blocks of $\bm B^{\ell}$ to obtain the transfer matrices at subsequent levels.